\numberwithin{equation}{section}
\theoremstyle{plain} 
\newtheorem{theorem}{Theorem}[section]
\newtheorem{lemma}[theorem]{Lemma}
\newtheorem{proposition}[theorem]{Proposition}
\newtheorem{corollary}[theorem]{Corollary}
\newtheorem{assumptionletter}{{{Assumption}}}
\theoremstyle{remark}
\newtheorem{definition}[theorem]{Definition}
\newtheorem{dfn}[theorem]{Definition}
\newtheorem{example}[theorem]{Example}
\newtheorem{remark}[theorem]{Remark}
\newcommand{\bthe}{\begin{theorem}}
\newcommand{\ethe}{\end{theorem}}
\newcommand{\ben}{\begin{enumerate}}
\newcommand{\een}{\end{enumerate}}
\newcommand{\bit}{\begin{itemize}}
\newcommand{\eit}{\end{itemize}}
\newcommand{\beq}{\begin{equation}}
\newcommand{\eeq}{\end{equation}}
\newcommand{\ble}{\begin{lemma}}
\newcommand{\ele}{\end{lemma}}
\newcommand{\bde}{\begin{definition}\rm}
\newcommand{\ede}{\halmos\end{definition}}
\newcommand{\bco}{\begin{corollary}}
\newcommand{\eco}{\end{corollary}}
\newcommand{\bpr}{\begin{proposition}}
\newcommand{\epr}{\end{proposition}}
\newcommand{\brem}{\begin{remark}\rm}
\newcommand{\erem}{\end{remark}}
\newcommand{\bproof}{\begin{proof}}
\newcommand{\eproof}{\end{proof}}
\newcommand{\bexam}{\begin{example}\rm}
\newcommand{\eexam}{\end{example}}
\newcommand{\bfi}{\begin{fig}}
\newcommand{\efi}{\end{fig}}
\newcommand{\btab}{\begin{tab}}
\newcommand{\etab}{\end{tab}}
\newcommand{\beao}{\begin{eqnarray*}}
\newcommand{\eeao}{\end{eqnarray*}\noindent}
\newcommand{\balo}{\begin{align*}}
\newcommand{\ealo}{\end{align*}}
\newcommand{\balm}{\begin{align}}
\newcommand{\ealm}{\end{align}\noindent}
\newcommand{\beam}{\begin{eqnarray}}
\newcommand{\eeam}{\end{eqnarray}\noindent}
\newcommand{\barr}{\begin{array}}
\newcommand{\earr}{\end{array}}
\newcommand{\C}{\mathbb{C}}
\newcommand{\E}{\mathbb{E}}
\newcommand{\M}{\mathbb{M}}
\newcommand{\N}{\mathbb{N}}
\renewcommand\P{\mathbb{P}}
\newcommand{\R}{\mathbb{R}}
\newcommand{\MO}{\mathrm{MO}}
\newcommand{\VF}[1]{{\color{blue} #1}}
\def\MRV{\mathcal{MRV}}
\def\RV{\mathcal{RV}}
\def\bC{\mathbb{C}}
\def\bCA{\mathbb{CA}}
\def\bI{\mathbb{I}}
\def\bepsilon{\boldsymbol \varepsilon}
\def\cB{\mathcal{B}}
\def\cR{\mathcal{R}}
\def\e{\text{e}}
\def\bzero{\boldsymbol 0}
\def\bone{\boldsymbol 1}
\def\bB{\boldsymbol B}
\def\bR{\boldsymbol R}
\def\bS{\boldsymbol{S}}
\def\bX{\boldsymbol X}
\def\bY{\boldsymbol Y}
\def\bZ{\boldsymbol Z}
\def\bL{\boldsymbol L}
\def\bp{\boldsymbol p}
\def\bb{\boldsymbol b}
\def\bx{\boldsymbol x}
\def\by{\boldsymbol y}
\def\bz{\boldsymbol z}
\def\ba{\boldsymbol a}
\newcommand{\eqd}{\stackrel{\mathrm{d}}{=}}
\newcommand{\al}{{\alpha}}
\newcommand{\ov}{\overline}
\newcommand{\vague}{\stackrel{\lower0.2ex\hbox{$\scriptscriptstyle
                    \it{v} $}}{\rightarrow}}
\newcommand{\weak}{\stackrel{\lower0.2ex\hbox{$\scriptscriptstyle
                    \it{w} $}}{\rightarrow}}
\newcommand{\what}{\stackrel{\lower0.2ex\hbox{$\scriptscriptstyle
                    \it{\hat{w}} $}}{\rightarrow}}
\newcommand{\eqdis}{\stackrel{\lower0.2ex\hbox{$\scriptscriptstyle
                    \mathrm{d}$}}{=}}
\newcommand{\distr}{\stackrel{\lower0.2ex\hbox{$\scriptscriptstyle
                    \it{d} $}}{\rightarrow}}
\definecolor{darkgreen}{RGB}{0,139,0}
\begin{document}

\begin{frontmatter}
\title{Aggregating heavy-tailed random vectors:\\ from finite sums to L\'evy processes}

\runtitle{Aggregating heavy-tailed vectors}

\begin{aug}
  \author[A]{\fnms{Bikramjit} \snm{Das}\ead[label=e1]{bikram@sutd.edu.sg}\orcid{0000-0002-6172-8228}}
    \and
   \author[B]{\fnms{Vicky} \snm{Fasen-Hartmann}\ead[label=e2]{vicky.fasen@kit.edu}\orcid{0000-0002-5758-1999}}
 \address[A]{Engineering Systems and Design, Singapore University of Technology and Design  \printead[presep={,\ }]{e1}}

   \address[B]{Institute of Stochastics, Karlsruhe Institute of Technology\printead[presep={,\ }]{e2}}


  \runauthor{B. Das and  V. Fasen-Hartmann}
\end{aug}

\begin{abstract}
The tail behavior of aggregates of heavy-tailed random vectors is known to be determined by the so-called principle of ``one large jump'', be it for finite sums, random sums, or, L\'evy processes. We establish that, in fact, a more general principle is at play. Assuming that the random vectors are  {multivariate} regularly   varying on various subcones of the positive quadrant, first we show that their  aggregates are also {multivariate} regularly varying on these subcones. This allows us to approximate certain tail probabilities  which were rendered asymptotically negligible under classical regular variation, despite the  ``one large jump'' asymptotics.  We also discover that depending on the structure of the tail event of concern, the tail behavior of the aggregates may be characterized by {more than a single large jump}.  Eventually, we illustrate a similar phenomenon for multivariate regularly varying L\'evy processes, establishing as well a relationship between   multivariate regular variation of a L\'evy process and multivariate regular variation of its L\'evy measure on different subcones.

\end{abstract}

\begin{keyword}[class=MSC]
\kwd[Primary ]{60G50}
\kwd{60G70}
\kwd[; Secondary ]{60F10}
\kwd{60G51}
\end{keyword}

\begin{keyword}
\kwd{convolution}
\kwd{compound Poisson process}
\kwd{L\'evy process}
\kwd{multivariate regular variation}
\kwd{one large jump}
\end{keyword}

\end{frontmatter}

\section{Introduction}\label{sec:intro}

In this paper we study the behavior of the asymptotic tail distribution of independent sums of heavy-tailed random vectors under the paradigm of multivariate regular variation \citep{bingham:goldie:teugels:1989, resnickbook:2007}. Assessment of such tail probabilities  are of interest in risk management for many finance, insurance, queueing, and environmental applications \citep{McNeil:Frey:Embrechts,embrechts:kluppelberg:mikosch:1997,asmussen:2003}. 
Multi-dimensional tail events are often characterized by at least one variable exceeding a high threshold, and the asymptotic probability of such events follow the so-called ``one large jump'' principle, see \cite{hult:lindskog:2006b}.

Assume that all our random elements are defined on the same probability space $(\Omega,\mathcal{A},\P)$. If $Z,Z^{(1)},\ldots, Z^{(n)}$ are independent and identically distributed (i.i.d.) random variables, then  for fixed $x>0$ we know that
\begin{align}\label{eq:unisub}
 \P(Z^{(1)}+\ldots+Z^{(n)}>tx) \sim n\,\P(Z>tx) 
 \quad \text{ as }t\to\infty,
 \end{align}
  if and only if $Z$ is subexponential, i.e.,  $\P(Z^{(1)}+Z^{(2)}>t) \sim 2\,\P(Z>t)$ as $t\to \infty$; \citet{chistyakov:1964} proved this for non-negative random variables, later extended to $\R$ by \citet{willekens:1986}. This phenomenon  of ``one large jump'' exhibited in \eqref{eq:unisub} is so called because a high threshold-crossing of the sum of a set of random variables occurs with the same asymptotic probability as any one of them crossing the same threshold.
  Recall that a random variable $Z$ has a regularly varying right tail if for $x>0$, we have $\lim_{t\to\infty} \P(Z>tx)/\P(Z>t) = x^{-\alpha}$ for some  $-\alpha<0$, which is called the index of regular variation or tail index. We write $Z\in\RV_{-\alpha}$. Regularly varying distributions are subexponential as well (cf. \cite{teugels:1975}), i.e, and hence \eqref{eq:unisub} holds when $Z\in\RV_{-\alpha}$.

In a multivariate context, an $\R_+^d$-valued random vector $\bZ$ is {multivariate regularly varying (MRV) on $\R_+^d\setminus\{\bzero\}$, if there exists a function $b(t)\to \infty$ as $t\to \infty$, and a non-null measure $\mu$ in the set of all Borel measures in $\R_+^d\setminus\{\bzero\}$ which are finite on sets bounded away from $\bzero$ such that
\[t\,\P(\bZ/b(t) \in A) \to \mu(A) \quad \text{ as } t\to\infty \]
for Borel sets $A$ bounded away from $\bzero$ with $\mu(\partial A)=0$; see Section \ref{sec:prelim} for details. In particular, we have $\mu(tA)=t^{-\alpha}\mu(A)$ for some $\alpha>0$ and write $\bZ\in \MRV(\alpha, b, \mu, \R_+^d\setminus\{\bzero\})$. Now, for i.i.d. non-negative random vectors $\bZ,\bZ^{(1)}, \ldots, \bZ^{(n)}$  with $\bZ \in \MRV(\alpha, b, \mu, \R_+^d\setminus\{\bzero\})$, we can deduce that
\begin{eqnarray} \label{eq:3}
    \bZ^{(1)} + \ldots + \bZ^{(n)} \in \MRV(\alpha, b, n \mu, \R_+^d\setminus\{\bzero\})
\end{eqnarray}
(cf. {\cite[Proposition 4.1]{resnick:1986}}, \cite[Section 7.3]{resnickbook:2007} and \cite[Lemma 3.11]{jessen:mikosch:2006}). Hence, for Borel sets $A$ bounded away from $\bzero$ with $\mu(\partial A)=0$ and fixed $n\ge 1$  we may approximate
\begin{align}\label{eq:nMRV}
\P(\bZ^{(1)} + \ldots + \bZ^{(n)} \in tA) \sim  \frac{n}{b^{\leftarrow}(t)}\,\mu(A) \sim n \,\P(\bZ\in tA)
\quad \text{ as } t\to\infty,
\end{align}
 if $\mu(A)>0$. Thus, \eqref{eq:nMRV} extends \eqref{eq:unisub} to higher dimensions and the principle of ``one large jump'' appears to hold.  Curiously though, one often encounters examples where for a large class of sets $A$, the value of $\mu(A)$ is equal to zero and in this case,
 \begin{align}\label{eq:nMRV+}
    \lim_{t\to\infty}b^{\leftarrow}(t)\,\P(\bZ^{(1)} + \ldots + \bZ^{(n)} \in tA)=0=  \lim_{t\to\infty}b^{\leftarrow}(t)\,\P(\bZ\in tA).
\end{align}
This makes \eqref{eq:nMRV} of limited practical use. For example, if the elements of $\bZ=(Z_1,\ldots,Z_d)$ are themselves i.i.d. (with regularly varying marginal tail distributions) and
  we consider sets
\begin{align}\label{eq:setA}
A=\{\bz\in \R_+^d: z_j > x_j \; \forall\; j \in S\},
\end{align} for indices $S\subseteq \mathbb{I}:=\{1,\ldots,d\}$ with $|S|\ge 2$, $x_j>0$ for $j\in S$, then for $\bZ\in tA$ to hold, at least two components of $\bZ$ need to be large together and under classical MRV assumptions we have $\mu(A)=0$ verifying \eqref{eq:nMRV+} (cf. \Cref{ex:cop:ind}). In fact, the components of $\bZ$ need not to be independent at all, a Gaussian dependence among variables with tail equivalent regularly varying marginal distributions and pairwise correlations less than one will ensure $\mu(A)=0$;  see \Cref{subsec:copasymind} for further examples.

In such a case, we do observe that a notion subtler than the well-known ``one large jump'' phenomenon holds { depending on the joint dependence of the underlying random vector and on the type of tail set $A$ considered.} If multivariate regular variation  holds on relevant subcones of $\R_+^d$ we realize that in many scenarios,
\begin{align}\label{eq:nMRVgeneral}
\P(\bZ^{(1)} + \ldots + \bZ^{(n)} \in tA)\sim { \frac{n}{\widetilde b^{\leftarrow}(t)}\,\widetilde \mu(A)}   \sim n \,\P(\bZ\in tA)
\quad \text{ as } t\to\infty,
\end{align}
{where $0<\widetilde \mu(A)<\infty$ and $\widetilde b^{\leftarrow}(t)$ is a function satisfying $\lim_{t\to\infty}
 b^{\leftarrow}(t)/\widetilde b^{\leftarrow}(t)=0$.}
This means that the left and the right hand side of \eqref{eq:nMRV} 
are asymptotically equivalent, even though the term in the middle contains $\mu(A)=0$. 
The relevant results and examples where we observe \eqref{eq:nMRVgeneral} are detailed in \Cref{subsec:hrvinallcones}. In particular, the results hold for more general sets than the ones  given in \eqref{eq:setA}, and  we can specify $\widetilde \mu(A)$ and $\widetilde b^{\leftarrow}(t)$. Formally, we show that $\sum_{k=1}^n\bZ^{(k)}$ is multivariate regularly varying on subcones of $\R_+^d$,
obeying \eqref{eq:3} but with $\alpha, b, \mu$ replaced by $\widetilde{\alpha}, \widetilde{b}, \widetilde{\mu}$ and $\R_+^d\setminus\{\bzero\}$  replaced by an appropriate subcone.
Note that \eqref{eq:nMRVgeneral} hints at connections to the notion of multivariate subexponentiality  (cf.  \cite{Cline:Resnick:1992,Omey:2006,samorodnitsky:sun:2016}).

Another phenomenon  investigated  for sets $A$ as defined in \eqref{eq:setA} is that the tail event $\bZ^{(1)}+\ldots+\bZ^{(n)} \in tA$ may be determined by threshold crossings in different co-ordinates of $S$, by different variables $\bZ^{(k)}$. Thus, an aggregation of random vectors leads to a tail event with a ``few large jumps'' and these jumps occur either together in one random vector $\bZ^{(k)}$, or separately in a few different vectors; and  for $A$ as defined in \eqref{eq:setA} with $|S|=i$, we get
\begin{align*}
  (b^{\leftarrow}(t))^i \,\P(\bZ^{(1)} + \ldots + \bZ^{(n)} \in tA) & \sim C_{n,i} \prod_{j\in S} x_j^{-\alpha} \sim C_{n,i} \prod_{j\in S} b^{\leftarrow}(t) \,\P(Z_j > tx_j)  
\end{align*}
as $t\to\infty$ (cf. \Cref{rem:delta1}). Thus,
\begin{align}\label{eq:nMRVgeneral2}
\P(\bZ^{(1)} + \ldots + \bZ^{(n)} \in tA)  \sim C_{n,i} \prod_{j\in S}\,\P(Z_{j}\in tx_j) \quad \text{ as } t\to\infty,
\end{align}
where $C_{n,i}>0$ is not necessarily equal to $n$, and depends on the number of summands $n$, the index $i=|S|$ which represents the type of set $A$, as well as the distribution of $\bZ$; the associated results are discussed  in \Cref{prop:ind} (\Cref{subsec:hrvinallcones}) and \Cref{subsec:nohrvinallcone}. We also notice that this phenomenon is often observed under the more general assumption of adapted-MRV (cf. \Cref{def:amrv}) for the underlying random vectors (cf. \Cref{subsec:nohrvinallcone}}). {Since $C_{n,i}\not=n$ in this example, it is no surprise that the limit measure observed in aggregating adapted-MRV is not linear in $n$ anymore, in contrast to \eqref{eq:3}.}

One of our primary interests  is to characterize the asymptotic behavior of multi-dimensional regularly varying L\'evy processes which have inherent applications to stochastic storage processes including insurance claims, inventory management, and more (cf. \cite{prabhu:1998,asmussen:2003}). This also happens to be  a natural progression from computing tail probabilities of finite aggregation of random vectors. A L\'evy process $\bL=(\bL(s))_{s\geq 0}$,
is a stochastic process    with $\bL(0)=\bzero$ $\mathbb{P}$-almost surely, has stationary and independent increments, and has c\`adl\`ag sample paths (cf. \cite{sato:1999}). Consequently, $\bL(s)$ is infinitely divisible and has
the same distribution as sums of i.i.d. random vectors; following the basic premise of this paper. A L\'evy process $\bL$ is characterized by its L\'evy measure $\Pi(A)$, which measures the expected
number of jumps of $\bL$ in $[0,1]$ whose jump sizes are in $A$ (cf. \Cref{section:Levy}).
The principle of one large jump is illustrated for multivariate regularly varying L\'evy processes by \citet{hult:lindskog:2005SPA,hult:lindskog:2006b} and the asymptotic rates of further hidden jumps have been characterized by  \citet{lindskog:resnick:roy:2014} (for the univariate case).
Our work addresses the case where the results of \cite{hult:lindskog:2006b} hold, including and specifically addressing cases with negligible probability approximation for the tail event.
{In particular,} a conclusion of \cite[Proposition 3.1]{hult:lindskog:2006b}  is that
$\bL(1)\in\MRV(\alpha, b, \mu, \R_+^d\backslash\{\bzero\})$ if and only if
 $\Pi
\in\MRV(\alpha, b, \mu, \R_+^d\backslash\{\bzero\})$ and then, for any Borel set $A$ bounded away from $\bzero$ with $\mu(\partial A)=0$  we have
\begin{eqnarray} \label{eq1}
    \P(\bL(s)\in tA) \sim  \frac{s}{b^{\leftarrow}(t)}\,\mu(A)\sim  s\,\Pi(tA) \quad \text{ as } t\to\infty,
\end{eqnarray}
if $\mu(A)>0$ (see \cite{embrechts:goldie:veraverbeke:1979} for the one dimensional case). Naturally, if $\mu(A)=0$ then \eqref{eq1} gives a zero estimate. For example, this  happens if we consider $\bL$ to be comprised of $d$ i.i.d. one-dimensional regularly varying L\'evy processes and $A$ is defined as in \eqref{eq:setA}.  Under quite
general conditions, if the L\'evy measure is multivariate regular varying on relevant subcones of $\R_+^d$, we
show  $\bL(s)$ is multivariate regularly varying on that subcone as well.
As a consequence under these conditions, we observe that,
\begin{eqnarray} \label{eq2}
    \P(\bL(s)\in tA)  \sim s\,\P(\bL(1)\in tA) \sim  \frac{s}{\widetilde b^{\leftarrow}(t)}\,\widetilde \mu(A)\sim  s\,\Pi(tA)  \quad \text{ as } t\to\infty,
\end{eqnarray}
for  sets $A$ in appropriate subcones; here the function $\widetilde{b}^{\leftarrow}(t)$ satisfies $\lim_{t\to\infty}
 b^{\leftarrow}(t)/\widetilde b^{\leftarrow}(t)=0$ and in contrast to \eqref{eq1} where $\mu(A)=0$, here we have $0<\widetilde{\mu}(A)<\infty$. However,  we also find cases where the asymptotics are different and we observe
 \begin{eqnarray} \label{eq3}
    \P(\bL(s)\in tA) \sim \phi(s)\,\Pi(tA) \quad \text{ as } t\to\infty,
\end{eqnarray}
where $\phi$ is a function for which $\phi(s)=s$ may not hold, defying the linearity property often observed for L\'evy processes, cf. \eqref{eq2}.
These results on the tail probabilities of heavy-tailed L\'evy processes have obvious implications on risk and ruin problems, especially in the context of insurance and finance, and is addressed in an associated article \cite{das:fasen:2023}.


This paper is organized as follows. In \Cref{sec:prelim}, we provide necessary preliminary results and background for our work; first we discuss the basic framework in terms of multivariate regular variation on subcones of $\R_+^d$ using $\M$-convergence.  We also discuss copulas and survival copulas used to model dependence in the said random vectors.
Our main result, \Cref{mainTheorem}, appears in \Cref{sec:mainresult} which is used to obtain results on ``{one or few large jumps}'' of the form \eqref{eq:nMRVgeneral} and \eqref{eq:nMRVgeneral2} under a variety of assumptions. In particular, we show
how  multivariate regular variation  of two independent random vectors on a subcone of $\R_+^d$ is used to obtain multivariate regular variation of their $n$-convolution. Our results are complemented with examples where they can be applied, especially for the convolution of finitely many i.i.d. random vectors. In Section \ref{sec:randomsum}, the results on finite convolutions are extended to random sums and finally applied to assess the tail behavior in regularly varying L\'evy processes in Section \ref{section:Levy}. The  proofs in the paper are relegated to the appendices.

\section{Preliminaries}\label{sec:prelim}
In this section, we discuss the framework for assessing probabilities of tail events where joint thresholds may be crossed; we briefly recall the theory of $\M$-convergence used to define multivariate regular variation on subcones of $\R_+^d=[0,\infty)^d$ (cf. \citep{lindskog:resnick:roy:2014,das:fasen:kluppelberg:2022}). Furthermore, we characterize convergence for different types of tail sets in \Cref{prop:rectsetsforM}. The notion of multivariate regular variation on different cones of $\R_+^d$ is extended in \Cref{def:amrv} to allow for a broader class of models and examples. This allows for a framework where the sum of two random vectors can be MRV on a specific subcone even if neither of the two summands is MRV on that subcone. In \Cref{subsec:copasymind}, we provide examples of joint distributions where our framework is useful, we use copulas to represent joint dependence.  This allows us to illustrate our results in a variety of examples.


Unless otherwise specified,  all random vectors are assumed to lie on the positive quadrant $\E_d:=\R_{+}^{d}$. Notationally, vector operations are  understood component-wise, e.g., for vectors $\bz=(z_1,\ldots,z_d)$ and $\bx=(x_1,\ldots,x_d)$, $\bx\le \bz$ means $x_i\le z_i$ for all $i$.  Moreover, for a constant $t>0$ and a set $A\subseteq \R_+^{d}$, we denote by {$tA:= \{t\bz: \bz\in A\}$.}
\subsection{Regular variation} \label{subsec:regvar}
The theory of regular variation provides a systematic framework to discuss heavy-tailed distributions; see \citet{bingham:goldie:teugels:1989,resnickbook:2007} for details. Here we briefly discuss regular varying functions and multivariate regular variation of measures and random vectors on Euclidean cones with $\M$-convergence.

  A function $f: \R_{+} \to \R_{+}$ is \emph{regularly varying} at infinity if for all $x>0$, we have $\lim_{t\to\infty} f(tx)/f(t) =x^{\beta}$
for some fixed $\beta\in \R$. We write $f\in\RV_{\beta}$; and if $\beta=0$, then the  function $f$ is called \emph{slowly varying}.  A real-valued random variable $Z $ with distribution function $F$, denoted by $Z\sim F$,  is regularly varying (at $+\infty$) if $\overline{F}:= 1-F \in \RV_{-\alpha}$ for some $\alpha> 0$. Equivalently, $Z\sim F$ is regularly varying with index $-\alpha< 0$ if there exists a measurable function $b:\R_+\to\R_+$ with $b(t)\to \infty$ as $t\to\infty$ such that
\[t\,\P(Z>b(t)x) = t\,\overline{F}(b(t)x) \xrightarrow{t \to \infty} x^{-\alpha} \quad \forall\, x>0.\]
We write $\overline{F}\in \RV_{-\alpha}(b)$. As a consequence $b(t)\in \RV_{1/\alpha}$ and a canonical choice for $b$ is
$b(t)= F^{\leftarrow}(1-1/t)=\overline{F}^{\leftarrow}(1/t)$ where $F^{\leftarrow}(x)= \inf\{y\in\R:F(y)\ge x\}$.

Measures $\mu$ and $\nu$ defined on $\mathcal{B}(\R_+)$ are \emph{(right) tail equivalent} if
\begin{align}\label{eq:measuretailequiv}
    \lim_{t\to\infty}\frac{\mu((t,\infty))}{\nu((t,\infty))}  = c,
\end{align}
for some $c>0$.
Naturally, the same holds for probability measures and hence,
distribution functions $F$ and $G$ are \emph{(right) tail equivalent} if
\begin{align}\label{eq:tailequiv}
    \lim_{t\to\infty}\frac{\overline{F}(t)}{\overline{G}(t)} = \lim_{t\to\infty} \frac{1-{F}(t)}{1-{G}(t)} = c,
\end{align}
for some $c>0$.  We call measures $\mu$ and $\nu$ (respectively distributions $F$ and $G$) \emph{completely} tail equivalent if \eqref{eq:measuretailequiv} (respectively \eqref{eq:tailequiv}) holds with $c=1$. We often assume that components of the random vectors considered in this paper are tail equivalent (if not identically distributed, or completely tail equivalent).

We discuss (multivariate) regular variation  on Euclidean subspaces of $\E_d=\R_{+}^{d}$ using $\M$-convergence of measures  which differs from vague convergence, the traditional notion used for multivariate regular variation. See \cite{hult:lindskog:2006a,lindskog:resnick:roy:2014,das:fasen:kluppelberg:2022} for further details and the preference for this notion over vague convergence; moreover see \cite{basrak:planinic:2019} for a broader notion of vague convergence.

Consider the space $\E_d$ endowed with the sup-norm metric $d(\bx,\by)=||\bx-\by||_{\infty}$. A \emph{cone} $\mathbb{C} \subset \E_d $ is a set which is closed under scalar multiplication: if $\bz \in
\mathbb{C}$ then $t\bz \in \mathbb{C}$ for $t>0$; a \emph{closed cone}  is a cone which is a closed set in $\E_d$.
Regular variation is defined using $\M$-convergence on a closed
cone $\mathbb{C}\subset \mathbb{E}_d$ with a closed cone  $\mathbb{C}_0 \subset  \mathbb{C}$ deleted.  We say that a  subset $A \subset \mathbb{C} \setminus \mathbb{C}_0$ is {\it bounded away from\/} $\mathbb{C}_0$ if $d(A,\mathbb{C}_0)=\inf\{d(\bx,\by):\bx\in A,\by\in\C_0 \}>0$. Denote by $\M(\mathbb{C}\setminus \mathbb{C}_0)$, the class of Borel measures on $\C\setminus\C_0$ assigning finite measures to all Borel sets $A\subset \C\setminus\C_0$, which are bounded away from $\C_0$. {We often refer to a subspace of the closed cone $\E_d$ which is a cone, as a \emph{subcone}.}

Let $\C_0\subset \C\subset \R_+^d$ be closed cones containing $\bzero$. We  define $\M$-convergence first and subsequently use it to define regular variation on $\M(\mathbb{C}\setminus \mathbb{C}_0)$.
\begin{dfn}[$\M$-convergence]\label{def:mconv}
Let $\mu,\mu_n, n\ge 1$ be Borel measures on $\M(\mathbb{C}\setminus \mathbb{C}_0)$. Suppose $\int f\,d\mu_n\to\int f\,d\mu$ as $n\to\infty$ for any bounded, continuous, real-valued function $f$ whose support is bounded away from $\C_0$, then we say {\it $\mu_n$ converges to $\mu$ in $\M(\mathbb{C}\setminus \mathbb{C}_0)$}, and write $\mu_n\to\mu$ in $\M(\C\backslash\C_0)$.
\end{dfn}
 Next we define regular variation of measures on $\M(\bC\setminus\bC_0)$ which is an extension of the definition found in \cite{hult:lindskog:2006a} for measures in $\M(\R_+^d\setminus \{\bzero\})$.

 
\begin{dfn}[Regular variation of measures]\label{def:mrvmeasure}
A Borel measure $\Pi$ on $\M(\mathbb{C}\setminus \mathbb{C}_0)$ is regularly varying on $\C\setminus\C_0$ if there exists
a regularly varying function $b(t) \in \RV_{1/\alpha}$,  $\alpha>0$, called the {\it scaling
  function\/} and a non-null (Borel) measure $\mu(\cdot)
\in \M(\mathbb{C}\setminus \mathbb{C}_0)$ called the {\it limit or tail measure\/} such that as $t \to\infty$,
\begin{equation}\label{eq:RegVarMeasure}
t\,\Pi( b(t) \,\cdot \,) \to \mu(\cdot),
\end{equation}
in $\M(\mathbb{C}
\setminus \mathbb{C}_0)$. 
 We write $\Pi \in \MRV(\alpha, b, \mu, \mathbb{C}
\setminus \mathbb{C}_0)$.
\end{dfn}
The next definition is a modification of \Cref{def:mrvmeasure} for multivariate probability measures and hence, random vectors in $\R^d_+$.
\begin{dfn}[Multivariate regular variation]\label{def:mrv}
A random vector $\bZ \in \R_{+}^{d}$ is multivariate regularly varying on $\mathbb{C}
\setminus \mathbb{C}_0$ if there exists a regularly varying function
$b(t) \in \RV_{1/\alpha}$, $\alpha >0$,  and a non-null (Borel) measure $\mu(\cdot)
\in \M(\mathbb{C}
\setminus \mathbb{C}_0)$ 
{such that} as $t \to\infty$,
\begin{equation}\label{eq:RegVarMeas}
t\,\P\left( \bZ/b(t) \in \,\cdot \,\right) \to \mu(\cdot),
\end{equation}
in $\M(\mathbb{C}
\setminus \mathbb{C}_0)$. We write $\bZ \in \MRV(\alpha, b, \mu, \mathbb{C}
\setminus \mathbb{C}_0)$ and one or more parameters are often dropped according to convenience.
\end{dfn}

\begin{remark}
In both Definitions \ref{def:mrvmeasure} and \ref{def:mrv},  since $b(t) \in \RV_{1/\alpha}$,
we observe that the limit measure $\mu(\cdot) $ has the scaling property 
$\mu(t \;\cdot\,) =t^{-\alpha} \mu(\,\cdot\,)$ for $t>0$. Hence, if the measure or the random vector is $\MRV(\alpha,b, \mu, \mathbb{C}
\setminus \mathbb{C}_0)$, we often refer to $-\alpha<0$ as its tail index (in the subspace $\mathbb{C}
\setminus \mathbb{C}_0$). 
\end{remark}
\subsection{Regular variation on co-ordinate subcones of the positive quadrant}\label{subsec:hrv}
Equipped with the notion of $\M$-convergence and regular variation,  we proceed to discuss regular variation on a particular set of subcones of $\R_+^d$ and also provide equivalent conditions for the same  (cf. \cite{mitra:resnick:2011hrv}, \cite[Section 2]{das:fasen:kluppelberg:2022}). For $\bz\in \R^{d}_+$ write $\bz= (z_{1}, \ldots,z_{d})$ and denote the (decreasing) order statistics  of $\bz$ by $z_{(1)}\ge z_{(2)}\ge \ldots \ge z_{(d)}$. For $0\le i \le d-1$ let
\begin{align*}
    \bCA_d^{(i)} & := \bigcup_{1\le j_{1}<\ldots<j_{d-i} \le d}  \{\bz\in \R^{d}_+: z_{j_{1}}=0,\ldots, z_{j_{d-i}}=0\}
                 = \{\bz\in \R^{d}_+: z_{(i+1)}=0\}, \quad
\end{align*}
and  $\bCA_d^{(d)}:=\{\bz\in \R^d_+: z_{(d)}>0\}$. For any $i=1,\ldots, d$, the closed cone $\bCA^{(i)}_{d}$ represents the union of all $i$-dimensional co-ordinate hyperplanes in $\R^{d}_{+}$.
Define the following sequence of subcones of $\R^d_+$ where we investigate regular variation (when it exists):
{
\begin{align}
\E^{(i)}_{d}  := &\;  \R^d_+ \setminus \bCA^{(i-1)}_{d} = \; \{\bz\in \R^d_+: z_{(i)}>0\}, \quad\quad 1\le i \le d. \label{def:edd}
\end{align}}
 We call the subsets $\E_d^{(i)}$ \emph{co-ordinate subcones} since they are cones obtained from $\R_+^d$ by removing particular co-ordinate hyperplanes. Here $\E^{(1)}_{d}$ is the {positive quadrant} with $\{\bzero\}=\bCA^{(0)}_{d}$ removed, $\E^{(2)}_{d}$ is the {positive quadrant} with all one-dimensional co-ordinate axes removed, $\E^{(3)}_{d}$ is the {positive quadrant} with all two-dimensional co-ordinate hyperplanes removed, and so on.  Clearly,
$$\E^{(1)}_{d} \supset \E^{(2)}_{d} \supset \ldots \supset \E^{(d)}_{d}=\bCA_d^{(d)}.$$
\begin{remark}[Asymptotic tail independence and hidden regular variation]
Suppose a random vector $\bZ\in \R^d_+$ admits regular variation on $\E_d^{(i)}$ with $\bZ\in \MRV(\alpha_i,b_i,\mu_i,\E_d^{(i)})$ and $\mu_i(\E_d^{(i+1)})=0$. We interpret this to be \emph{asymptotic tail independence at level $i$}, meaning the probability with  which $i+1$ or more components are simultaneously large, is negligible in comparison to the probability with  which $i$ (or fewer) components are simultaneously large. Suppose for some $j>i\ge 1$ we have $\bZ\in \MRV(\alpha_j,b_j,\mu_j,\E_d^{(j)})$ with $\lim_{t\to\infty} ({b_i(t)}/{b_j(t)}) =\infty$ then we say $\bZ$ exhibits  \emph{hidden regular variation} (HRV) on $\E_d^{(j)}$ with respect to MRV on $\E_d^{(i)}$.
\end{remark}

 We investigate regular variation on cones of the form $\E_d^{(i)}$, since joint exceedances often occur in tail subsets of such cones. A recipe for seeking (hidden) regular variation on such subsets have been discussed in Section 2.1 of \cite{das:fasen:kluppelberg:2022} and we do not repeat the steps here. In the rest of this section, we characterize a particular family of sets $\cR^{(i)}$ (defined in \eqref{eq:Ri} below), proving {that it is an $\M$-convergence determining class on $\E_d^{(i)}$}.  The particular tail sets appear in multivariate risk and reliability problems where the quantity of interest is a finite or a random sum of identically distributed vectors.

 Let $\cB:=\cB(\R_+^d)$ denote the Borel $\sigma$-algebra on $\R_+^d$. For any $i\in \{1,\ldots, d\}=:\mathbb{I}$,  $\E_d^{(i)}$ is a subspace of $\R_+^d$ and we denote its induced $\sigma$-algebra  by
\begin{align}\label{eq:Bi}
\cB^{(i)}: = \cB(\E_d^{(i)}) = \{A\in \cB: A\subseteq \E_d^{(i)} \}.
\end{align}
A \emph{rectangular set} in $\E_d^{(i)}$ is defined as any set $A= \{\bz \in \R_+^d: z_j>x_j\;\forall\, j\in S\}$ where $S\subseteq \mathbb{I}$, $|S|\ge i$ and $x_j>0\; \forall\, j\in S$. Let us denote the collection
\begin{align}\label{eq:Ri}
\cR^{(i)}:= \{A\in \cB^{(i)}: A \; \text{is a rectangular set in } \E_d^{(i)} \}.
\end{align}

\begin{lemma}\label{lem:RiisPi}
$\cR^{(i)}$ is a $\pi$-system and  $\sigma(\cR^{(i)})=\cB^{(i)}$.
\end{lemma}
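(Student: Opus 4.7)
The plan is to verify the two assertions separately. Both are essentially bookkeeping; the main point is to recognize that the size constraint $|S|\ge i$ on the index sets is preserved under intersection and can be recovered in the generation argument by intersecting with $\E_d^{(i)}$.

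For the $\pi$-system property, take two elements $A_1=\{\bz: z_j>x_j,\ j\in S_1\}$ and $A_2=\{\bz: z_j>y_j,\ j\in S_2\}$ of $\cR^{(i)}$ with $|S_1|,|S_2|\ge i$. Their intersection is
\[
A_1\cap A_2 = \{\bz: z_j>c_j\ \forall j\in S_1\cup S_2\},\qquad c_j=\begin{cases}\max(x_j,y_j),& j\in S_1\cap S_2,\\ x_j,& j\in S_1\setminus S_2,\\ y_j,& j\in S_2\setminus S_1.\end{cases}
\]
Since $|S_1\cup S_2|\ge |S_1|\ge i$, this is again a rectangular set in $\E_d^{(i)}$, so $A_1\cap A_2\in\cR^{(i)}$.

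For the equality $\sigma(\cR^{(i)})=\cB^{(i)}$, the inclusion $\sigma(\cR^{(i)})\subseteq\cB^{(i)}$ is immediate because $\cR^{(i)}\subseteq\cB^{(i)}$ and $\cB^{(i)}$ is a $\sigma$-algebra (note $\E_d^{(i)}$ is open in $\R_+^d$, so $\cB^{(i)}$ coincides with the subspace Borel $\sigma$-algebra $\cB(\E_d^{(i)})$). For the reverse inclusion, I would recall that $\cB(\R_+^d)$ is generated by the half-spaces $\{z_j>x_j\}$ for $j\in\mathbb I$ and $x_j\ge 0$; hence $\cB^{(i)}$ is generated by their traces on $\E_d^{(i)}$, that is, by the sets $\{z_j>x_j\}\cap\E_d^{(i)}$. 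It therefore suffices to show every such trace lies in $\sigma(\cR^{(i)})$.

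The key step is this generation. Fix $j\in\mathbb I$ and $x_j\ge 0$; without loss of generality $j=1$. Writing out the condition $z_{(i)}>0$ as \emph{at least $i-1$ of the remaining coordinates are strictly positive}, I get
\[
\{z_1>x_1\}\cap\E_d^{(i)} \;=\; \bigcup_{\substack{S\subseteq\mathbb I\setminus\{1\}\\ |S|=i-1}}\{z_1>x_1,\ z_k>0\ \forall k\in S\},
\]
a finite union. Each summand is in turn a monotone countable union
\[
\{z_1>x_1,\ z_k>0\ \forall k\in S\}=\bigcup_{n\ge 1}\{z_1>x_1,\ z_k>1/n\ \forall k\in S\},
\]
and for every $n$ (and for $x_1$, replaced by $\max(x_1,1/n)$ if $x_1=0$), the set on the right is a rectangular set with index set $\{1\}\cup S$ of cardinality $i$, hence an element of $\cR^{(i)}$. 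Thus $\{z_j>x_j\}\cap\E_d^{(i)}\in\sigma(\cR^{(i)})$, which proves the reverse inclusion. The only subtlety, and the step one must take care with, is the passage from $\{z_k>0\}$ to $\bigcup_n\{z_k>1/n\}$, which is what allows strictly positive thresholds in $\cR^{(i)}$ to generate the coordinate conditions defining $\E_d^{(i)}$ itself.
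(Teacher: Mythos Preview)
Your proof is correct and, for the $\pi$-system claim, essentially identical to the paper's argument (same union of index sets, same coordinate-wise maximum for the thresholds). For the equality $\sigma(\cR^{(i)})=\cB^{(i)}$, you in fact go further than the paper: the published proof simply asserts ``It can also be checked that $\sigma(\cR^{(i)})=\cB^{(i)}$'' without details, whereas you supply a concrete generation argument via traces of half-spaces and the countable approximation $\{z_k>0\}=\bigcup_n\{z_k>1/n\}$. Your parenthetical handling of the case $x_1=0$ is a little terse but correct once unpacked (the monotone union $\bigcup_n\{z_1>\max(x_1,1/n),\ z_k>1/n\ \forall k\in S\}$ works uniformly in both cases).
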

The proofs of \Cref{lem:RiisPi} as well as other subsequent results in this section are given in \Cref{sec:proofprelim}. 
The following proposition shows that for verifying convergence of measures in $\E_d^{(i)}$, we can restrict to testing convergence in sets belonging to $\cR^{(i)}$. The result and the proof are in the spirit of \cite[Lemma 6.1]{resnickbook:2007}. 
\begin{proposition} \label{prop:rectsetsforM}
Let $\mu, \mu_t \in \M(\E_d^{(i)})$ for all $t>0$ and some fixed $i\in \bI$. Then  as $t\to\infty$,
\begin{align}\label{Mcon:mut}
\mu_t \to \mu \quad \text{in} \quad \M(\E_d^{(i)})
\end{align}
 if and only if
 \begin{align}\label{Mcon:mutAy}
 \lim_{t\to\infty}\mu_t(A) \to \mu(A), \quad \forall\; A\in \cR^{(i)}
 \end{align}
with $\mu(\partial A)=0$ ($\mu$-continuity set), where $\cR^{(i)}$ is the collection of sets as defined in \eqref{eq:Ri}.
\end{proposition}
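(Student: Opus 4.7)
The plan is to prove the two implications separately, with the forward direction following immediately from existing Portmanteau-type results for $\M$-convergence and the reverse direction requiring an inclusion-exclusion construction. For the forward direction, I would invoke the standard Portmanteau characterization of $\M$-convergence from \cite{hult:lindskog:2006a} or \cite{lindskog:resnick:roy:2014}: if $\mu_t \to \mu$ in $\M(\E_d^{(i)})$, then $\mu_t(A) \to \mu(A)$ for every $\mu$-continuity Borel set $A$ bounded away from $\bCA_d^{(i-1)}$. For any $A = \{\bz : z_j > x_j \ \forall j \in S\} \in \cR^{(i)}$, the condition $|S| \ge i$ together with $x_j > 0$ forces $z_{(i)} \ge \min_{j \in S} x_j > 0$ on $A$, so $A$ is bounded away from $\bCA_d^{(i-1)}$, and the forward implication is immediate.

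For the reverse direction, I would reduce $\M$-convergence to weak convergence on truncated regions. Any bounded continuous $f$ on $\E_d^{(i)}$ with support bounded away from $\bCA_d^{(i-1)}$ has $\mathrm{supp}(f) \subset G_\eps := \{\bz : z_{(i)} > \eps\}$ for some $\eps > 0$, and $G_\eps = \bigcup_{S : |S| = i}\{\bz : z_j > \eps \ \forall j \in S\}$ is a finite union of sets in $\cR^{(i)}$. Using \Cref{lem:RiisPi} together with inclusion-exclusion, and choosing $\eps$ to avoid the at most countable set of discontinuity levels of $\mu$, I would obtain $\mu_t(G_\eps) \to \mu(G_\eps)$ along with a uniform bound on $\mu_t|_{G_\eps}$ for large $t$. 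The crucial step is then establishing $\mu_t(B) \to \mu(B)$ on bounded box-like $\mu$-continuity sets $B = \{\bz : a_j < z_j \le b_j \ \forall j \in T\}$ with $|T| \ge i$ and $a_j \ge \eps$, which follows from the classical representation $\mathbf{1}_B = \sum_{T' \subseteq T}(-1)^{|T'|}\mathbf{1}_{A_{T'}}$ with each $A_{T'} \in \cR^{(i)}$. A uniform approximation of $f$ by step functions built from such disjoint boxes, with boundaries placed on $\mu$-continuity surfaces, then delivers $\int f \, d\mu_t \to \int f \, d\mu$.

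The principal obstacle is that sets in $\cR^{(i)}$ are unbounded above, so individually they cannot localize bounded continuous test functions. The inclusion-exclusion representation of bounded boxes as signed combinations of upper orthants in $\cR^{(i)}$ provides the bridge between what is given (convergence on upward rectangles) and what is needed (convergence of integrals against bounded continuous test functions supported within $G_\eps$). Careful selection of truncation and partition thresholds at each approximation step, ensuring $\mu$-continuity of all finitely many sets involved, is essential to rule out boundary pathologies.
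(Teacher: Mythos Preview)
Your forward direction matches the paper's exactly. The reverse direction, however, takes a genuinely different route. The paper argues via relative compactness: it restricts to $\E_d^{(i)}\setminus \bCA_i^{(r_\ell)}$ for a sequence $r_\ell\downarrow 0$, bounds $\sup_t \mu_t^{(r_\ell)}(f)$ for uniformly continuous $f$ using that the support of such $f$ lies in a finite union of sets in $\cR^{(i)}$, extracts convergent subsequences via a diagonal argument on a dense family of compactly supported functions, invokes \cite[Theorem~2.4]{lindskog:resnick:roy:2014} to conclude relative compactness of $\{\mu_t\}$, and then identifies the limit uniquely via the $\pi$-system property of $\cR^{(i)}$ (\Cref{lem:RiisPi}). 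Your approach is more hands-on: you recover bounded boxes from upper orthants by inclusion--exclusion and approximate test functions directly by step functions. What your route buys is that it avoids the abstract relative-compactness machinery of \cite{lindskog:resnick:roy:2014}; what the paper's route buys is that the subsequential-limit-plus-uniqueness template is cleaner and sidesteps any explicit approximation.

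There is one point in your sketch that would not go through as written: the ``uniform approximation of $f$ by step functions'' is only available when $f$ has compact support, whereas the test functions defining $\M$-convergence are merely bounded continuous with support bounded away from $\bCA_d^{(i-1)}$, and may have unbounded support in the remaining directions. The fix is standard---first establish $\int f\,d\mu_t\to\int f\,d\mu$ for compactly supported continuous $f$ (where your box approximation is valid), and then use your observation that $\mu_t(G_\eps)\to\mu(G_\eps)<\infty$ to upgrade this to all bounded continuous $f$ supported in $G_\eps$ (vague convergence of finite measures plus convergence of total mass gives weak convergence on $G_\eps$). With that adjustment your argument is complete.
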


\begin{remark}
In light of Proposition~\ref{prop:rectsetsforM}, when we seek regular variation (or any measure convergence) in the space $\E_d^{(i)}$ using $\M$-convergence (as in Definition \ref{def:mrv}), we can equivalently show this only for rectangular sets in $\cR^{(i)}$ (which are also continuity sets with respect to the limit measure).
\end{remark}
We wrap this section up with  an extension of the so-called ``heavier tail wins" phenomenon, in the context of multivariate regular variation on a subcone of $\R_+^d$. It is useful for many of the proofs in this paper.

\begin{lemma} \label{Lemma_aux}
Suppose $\bX\in \MRV(\alpha_i,b_i,\mu_i,\E_d^{(i)})$
for some fixed $i\in\mathbb{I}$, and $\bX$ is independent of the $\R^d$-valued
random vector $\bY$ with $\E\|\bY\|^{\alpha_i+\gamma}<\infty$
for some $\gamma>0$. Then
\begin{eqnarray*}
    \bX+\bY\in\MRV(\alpha_i,b_i,\mu_i,\E_d^{(i)}).
\end{eqnarray*}
\end{lemma}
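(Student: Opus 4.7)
The plan is to run a classical ``heavier tail wins'' sandwich argument, using Proposition~\ref{prop:rectsetsforM} to reduce matters to rectangular test sets inside $\E_d^{(i)}$. Concretely, it suffices to show $t\,\P(\bX+\bY \in b_i(t) A) \to \mu_i(A)$ for every $\mu_i$-continuity rectangle $A = \{\bz\in\R_+^d : z_j > x_j \;\forall\, j \in S\} \in \cR^{(i)}$ with $|S| \geq i$. For $\eps \in (0, \min_{j\in S} x_j)$ I introduce the dilation and erosion $A^{+\eps} := \{\bz : z_j > x_j - \eps \;\forall\, j \in S\}$ and $A^{-\eps} := \{\bz : z_j > x_j + \eps \;\forall\, j \in S\}$, which both lie in $\cR^{(i)}$ and remain bounded away from $\bCA_d^{(i-1)}$.

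\textbf{Sandwich and the Markov step.} Using the sup-norm $\|\bY\|_\infty$ together with independence of $\bX$ and $\bY$, the inclusions $\{\bX+\bY \in b_i(t) A\} \cap \{\|\bY\|_\infty \le \eps b_i(t)\} \subseteq \{\bX \in b_i(t) A^{+\eps}\}$ and $\{\bX \in b_i(t) A^{-\eps}\} \cap \{\|\bY\|_\infty \le \eps b_i(t)\} \subseteq \{\bX+\bY \in b_i(t) A\}$ deliver the two-sided bound
\[
\P(\bX \in b_i(t) A^{-\eps}) - \P(\|\bY\|_\infty > \eps b_i(t)) \;\le\; \P(\bX+\bY \in b_i(t) A) \;\le\; \P(\bX \in b_i(t) A^{+\eps}) + \P(\|\bY\|_\infty > \eps b_i(t)).
\]
Norm equivalence on $\R^d$, Markov's inequality, and the moment assumption give $\P(\|\bY\|_\infty > \eps b_i(t)) \le C\,\eps^{-(\alpha_i+\gamma)} b_i(t)^{-(\alpha_i+\gamma)}$ for a constant $C$ depending only on $\bY$. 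Since $b_i \in \RV_{1/\alpha_i}$ implies $b_i(t)^{\alpha_i+\gamma} \in \RV_{1+\gamma/\alpha_i}$, the error $t\,\P(\|\bY\|_\infty > \eps b_i(t))$ vanishes as $t \to \infty$.

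\textbf{Passing to the limit.} Choose a sequence $\eps_n \downarrow 0$ that avoids the at-most countably many values where $A^{\pm\eps}$ fails to be a $\mu_i$-continuity set. Applying the MRV hypothesis on $\bX$ to $A^{\pm\eps_n}$ yields $t\,\P(\bX\in b_i(t) A^{\pm\eps_n}) \to \mu_i(A^{\pm\eps_n})$, and combining with the sandwich inequality and the Markov bound gives
\[
\mu_i(A^{-\eps_n}) \;\le\; \liminf_{t\to\infty} t\,\P(\bX+\bY \in b_i(t) A) \;\le\; \limsup_{t\to\infty} t\,\P(\bX+\bY \in b_i(t) A) \;\le\; \mu_i(A^{+\eps_n}).
\]
Monotone continuity of $\mu_i$ together with $\mu_i(\partial A)=0$ forces $\mu_i(A^{\pm\eps_n}) \to \mu_i(A)$ as $n \to \infty$, which closes the argument.

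\textbf{Main obstacle.} The only delicate point is subcone compatibility: the dilation $A^{+\eps}$ must itself lie in $\cR^{(i)}$, and hence remain bounded away from $\bCA_d^{(i-1)}$, before one can invoke MRV of $\bX$ on $\E_d^{(i)}$. This is handled by the restriction $\eps < \min_{j \in S} x_j$, which preserves positivity of the defining thresholds and keeps $A^{\pm\eps}$ inside $\cR^{(i)}$.
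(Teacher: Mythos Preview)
Your proof is correct and follows essentially the same route as the paper's: reduce to rectangular $\mu_i$-continuity sets via Proposition~\ref{prop:rectsetsforM}, sandwich $A$ between $A^{\pm\eps}$ with $\eps<\min_{j\in S}x_j$, control the perturbation by Markov's inequality and the moment assumption, and let $\eps\downarrow 0$. The only cosmetic differences are that the paper works with the equivalent normalisation $b_i^{\leftarrow}(t)\,\P(\cdot\in tA)$ instead of $t\,\P(\cdot\in b_i(t)A)$, and that the paper obtains its lower bound by factoring $\P(\bX\in tA_\eps^-)\P(\bY\in tN_\eps)$ via independence rather than your subtraction $\P(\bX\in\cdot)-\P(\|\bY\|_\infty>\cdot)$; your version is equally valid and in fact does not even require independence for the sandwich step.
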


\subsection{A joint multivariate regular variation condition}\label{subsec:jointmrv}
Since our interest is in the tail behavior of aggregates over independent regularly varying vectors, when considering joint exceedances of sums of such vectors, regular variation on various combinations of subcones become important. Incidentally, in certain scenarios we encounter a sequence of random vectors, none of which possess MRV on a particular subcone, yet their sum admits MRV on the same subcone. The following definition provides conditions for such  vectors to be tractable under aggregation in the multivariate regular variation framework.

\begin{definition}[Adapted Multivariate Regular Variation]\label{def:amrv}
Suppose $\bZ\in \R_+^d$ is a random vector such that the following holds:
\begin{enumerate}
    \item $\bZ \in \MRV(\alpha_i,b_i,\mu_i,\E_d^{(i)})$  for $i=1,\ldots, \Delta$ for some $\Delta \le d$.
    \item If $\Delta<d$, then additionally assume that there exists a $\gamma\in (0,\alpha_1/d)$, such that for \linebreak $i=\Delta+1,\ldots, d$ and any $A\in \cR^{(i)}$, we have
\begin{align}\label{cond:nullcon}
 \lim_{t\to\infty} t\,\P\left(\frac{\bZ}{b_i(t)}\in \, A\right)  =0
\end{align}
    where $b_i(t):=t^{1/(i(\alpha_1+\gamma))}$, i.e., with the rate $b_i$, we have convergence to zero. We refer to \eqref{cond:nullcon} as \emph{null convergence} and write $\bZ\in \mathcal{NC}(b_i(t),\E_d^{(i)})$.
\end{enumerate}
Then we say $\bZ$ is \emph{adapted multivariate regular varying} (or, \emph{adapted-MRV}) on $\R^d_{+}$ and write
$\{\bZ\in \MRV^*(\alpha_i,b_i,\mu_i,\E_d^{(i)}); i=1,\ldots, d; \Delta\}$ where $\alpha_i=\infty, \mu_i\equiv 0$ for $i=\Delta+1, \ldots, d$.
\end{definition}
Adapted-MRV is defined jointly for all cones $\E_d^{(i)}, i=1,\ldots, d$. Note the following.
\begin{enumerate}
\item[i)] For $i=1,\ldots, \Delta$, $\MRV^*(\alpha_i,b_i,\mu_i,\E_d^{(i)})$ and $\MRV(\alpha_i,b_i,\mu_i,\E_d^{(i)})$ are equivalent.
\item[ii)] For  $i=\Delta+1,\ldots,d$, the notation $\MRV^*(\alpha_i,b_i,\mu_i,\E_d^{(i)})$ means that for some \linebreak $0<\gamma<\alpha_1/d$, we have  $\bZ\in \mathcal{NC}(b_i(t)=t^{1/(i(\alpha_1+\gamma))}, \E_d^{(i)})$ and $\alpha_i=\infty, \mu_i\equiv 0$. The constant $\gamma$ is chosen to be the same for all $i=\Delta+1,\ldots,d$ and the value of  $\Delta=\arg \max_i \{\alpha_i: \alpha_i<\infty\}$ is implicit.
\end{enumerate}

\begin{example}\label{ex:vecnomrvsummrv2}
  Let $X^{(1)}, X^{(2)} \sim F$ be i.i.d  random variables with $\P(X^{(1)}>x)=x^{-\alpha}, x>1$ for some $\alpha>0$.  Let $B^{(1)}, B^{(2)}$ be i.i.d  random variables with $\P(B^{(1)}=1)=0.5=\P(B^{(1)}=0)$. Define for $k=1,2$,
  \[\bZ^{(k)}: = B^{(k)}(X^{(k)},0) + (1-B^{(k)})(0,X^{(k)}).\]
  Then  $\bZ^{(1)}, \bZ^{(2)}$ are i.i.d. with $\bZ^{(1)}\in \MRV(\alpha, b(t)=t^{1/\alpha}, \mu_1, \E_2^{(1)})$ where
  \[\mu_1(([0,x_1]\times[0,x_2])^c) = 0.5 x_1^{-\alpha} + 0.5 x_2^{-\alpha}, \quad x_1, x_2 >0.\]
  Clearly, $\bZ^{(1)} \in \mathcal{NC}(b^*(t)=t^{1/2(\alpha+\gamma)}, \E_2^{(2)})$ for any $\gamma>0$. Hence, $\bZ^{(k)}$ is adapted-MRV with $\Delta=1$. But we can check that  $\bZ^{(1)}+\bZ^{(2)}\in \MRV(\alpha, b(t)=t^{1/\alpha}, 2\mu_1, \E_2^{(1)})$  and $\bZ_1+\bZ_2\in \MRV(2\alpha, b(t)=t^{1/(2\alpha)}, \mu_2, \E_2^{(2)})$  where
  \[\mu_2((x_1,\infty)\times(x_2,\infty)) = 0.5 (x_1x_2)^{-\alpha}, \quad x_1, x_2 >0.\]
\end{example}

\begin{remark} Many examples of regularly varying vectors on $\E_d^{(1)}$ are in fact adapted-MRV, including all examples mentioned in \Cref{subsec:copasymind} and more. A few comments on how \eqref{cond:nullcon} enriches our class of models follows.
\begin{enumerate}
  \item[(a)] If $\bZ\in \MRV(\alpha_i,b_i,\mu_i,\E_d^{(i)})$ for  $i=1,\ldots, d$, then clearly \linebreak $\{\bZ\in \MRV^*(\alpha_i,b_i,\mu_i,\E_d^ {(i)}); i=1,\ldots, d; \Delta=d \}$. Thus, condition \eqref{cond:nullcon} provides us a little more flexibility in case we fail to have MRV on subcone $\E_d^{(j)}$ for $j>\Delta$ for some $\Delta=2,\ldots,d$.
    \item[(b)] Condition \eqref{cond:nullcon} is satisfied if $\E (Z_{(i)})^{i(\alpha_1+\widetilde \gamma)}<\infty$ {for some $\widetilde \gamma$ with $\gamma<\widetilde \gamma<\alpha_1/d$}; here $Z_{(1)}\ge \ldots \ge Z_{(d)}$ are the  order statistics of the elements of $\bZ=(Z_1,\ldots, Z_d)$. In particular, one such example is when $Z_{(j)}=0$ for $j>\Delta$, see \Cref{subsec:nohrvinallcone}. For further examples of multivariate heavy-tailed distributions exhibiting such a property see \cite{das:resnick:2015, das:fasen:2018}.
    \item[(c)] If $\Delta<d$, then \eqref{cond:nullcon} still allows for MRV to hold on $\E_d^{(i)}, i= \Delta+1,\ldots, d$ albeit with a lighter regularly varying tail rate than $i(\alpha_1+\gamma)$.
\end{enumerate}
\end{remark}

\subsection{Tail distributions, survival copulas and asymptotic tail independence} \label{subsec:copasymind}
In this section we discuss dependence structures for $d$-dimensional random vectors, which are used to model risks, claim sizes, or increments in general. Following common practice we model the marginal distributions separately from the dependence structures and hence, resort to using \emph{copulas} (cf. \citep{joe:1997,nelsen:1999}). A key feature of most of the copulas we discuss is the presence of \emph{asymptotic tail independence},  implying  that the joint exceedance of a threshold by $i$ components of the random vector occur at a rate negligible compared to joint exceedance of $(i-1)$ components for some or all $i=2,\ldots, d$.

Furthermore, we will also elaborate on  multivariate regular variation properties under these copulas. To this end, for all examples in this section, we consider random vectors $\bZ=(Z_1,\ldots,Z_d)$ with identically distributed continuous marginal components with distribution function $F_{\alpha}$ where $\overline{F}_{\alpha}\in \RV_{-\alpha}$ with $\alpha>1$, and the dependence is given by the particular (survival) copula. Moreover, we fix $b_{\alpha}(t)=\overline{F}_{\alpha}^{\leftarrow}(1/t), t>1$. Note that assuming tail equivalent marginals would lead to similar conclusions but notations become cumbersome.

 Our interest  is in tail sets, hence we will often use \emph{survival copulas} along with copulas which we recall briefly here. For a random vector $\bZ=(Z_1,\ldots, Z_d)\sim F$ with  continuous marginal distributions $F_1,\ldots, F_d$, the copula $C:[0,1]^d \to [0,1]$ and the survival copula $\widehat{C}:[0,1]^d \to [0,1]$ are {distribution functions} such that:
\begin{align*}
    F(x_1,\ldots,x_d):=\P(Z_1\le x_1,\ldots, Z_d \le x_d) & = C(F_1(x_1), \ldots, F_d(x_d)), \; (x_1,\ldots,x_d) \in \R^d,\\
   \overline{F}(x_1,\ldots,x_d):= \P(Z_1 > x_1,\ldots, Z_d > x_d) & = \widehat{C}(\overline{F}_1(x_1), \ldots, \overline{F}_d(x_d)), \; (x_1,\ldots,x_d) \in \R^d,
\end{align*}
where $\overline{F}_j=1-F_j\; \forall\, j\in\mathbb{I}.$

\begin{example}[Independence copula]\label{ex:cop:ind}
A widely used copula to exhibit {asymptotic tail independence} and {hidden regular variation} is the independence  copula. The independence copula ${C}_{\perp}$ and survival copula $\widehat{C}_{\perp}$ are given by
\begin{align}\label{eq:scop:ind}
   {C}_{\perp}(u_1,\ldots,u_d) = \widehat{C}_{\perp}(u_1,\ldots,u_d) = u_1u_2\cdots u_d, \quad 0 < u_i <1.
\end{align}
Let $\bZ\sim F$ with identical (continuous) marginal $F_{\alpha}$ as defined above and dependence given by $C_{\perp}$ (or $\widehat{C}_{\perp}$). Then
\begin{align}\label{eq:mrvhrv:ind}
    \bZ\in \MRV(i\alpha,b_{\alpha}^{1/i}, \mu_i,\E_d^{(i)})
\end{align}
where
\begin{eqnarray} \label{eq:mrvhrv:mui}
            \mu_i\left(\{\bz\in \E_d^{(i)}:z_j>x_j\; \forall\, j \in S\}\right)=\prod_{j\in S} x_j^{-\alpha}
           \end{eqnarray}
for $S\subseteq \mathbb{I}$ with $|S|=i, i =1,\ldots, d$ (cf. \cite{das:fasen:kluppelberg:2022}) and $\mu_i(\E_d^{(i+1)})=0$. Clearly $\bZ$ exhibits hidden regular variation on all cones $\E_d^{(i)}, i=2,\ldots, d$.

\end{example} 
\begin{example}[Marshall-Olkin copula]\label{ex:cop:mo}
In reliability theory, the Marshall-Olkin distribution provides an elegant mechanism to capture the dependence between the failure of subsystems in an entire system. We focus on a particular structure of the Marshall-Olkin survival copula as given in \citep[eq. (2.4), page 58]{lin:li:2014}. Assume that for all $\emptyset \neq S\subseteq \mathbb{I}$  there exists a parameter $\lambda_S>0$. Consider then the generalized Marshall-Olkin survival copula  given by
\begin{align}\label{eq:scop:mo}
   \widehat{C}_{\MO}(u_1,\ldots,u_d) = \prod_{i=1}^d \prod_{|S|=i} \bigwedge_{j\in S} u_j^{\eta_j^S}, \quad 0 < u_j <1,
\end{align}
where
\begin{align}\label{eq:MO:eta}
    \eta_j^S = \frac{\lambda_S}{\sum\limits_{J \supseteq \{j\} } \lambda_J}, \quad j\in S \subseteq \mathbb{I}.
\end{align}
A typographical error in the formula for $\eta_j^S$ in \citep[eq. (2.4), page 58]{lin:li:2014} is corrected in  \eqref{eq:MO:eta}. We consider two particular choices of the parameters $\lambda_S$ for our examples.

\begin{enumerate}[(a)]
    \item \emph{Equal parameter for all sets:} Let $\lambda_S=\lambda>0$ for all $\emptyset\neq S\subseteq \mathbb{I}$; hence, from \eqref{eq:MO:eta} we have \begin{align}\label{def:beta}
        \eta_j^S=2^{-(d-1)} =: \beta.
    \end{align} Therefore
    \begin{align*}
       \P(Z_1 > x_1,\ldots, Z_d > x_d) & = \widehat{C}(\overline{F}_{\alpha}(x_1), \ldots, \overline{F}_{\alpha}(x_d))\\
       & = \prod_{i=1}^d \prod_{|S|=i} \bigwedge_{j\in S} (\overline{F}_{\alpha}(x_j))^{\beta}\\
       & = \prod_{j=1}^d (\overline{F}_{\alpha}\left(x_{(j)})\right)^{2^{d-j}\beta} = \prod_{j=1}^d (\overline{F}_{\alpha}\left(x_{(j)})\right)^{2^{-(j-1)}},
    \end{align*}
    where $x_{(1)}\ge \ldots \ge x_{(d)}$ denote the decreasing order statistics of $x_1, \ldots, x_d$. Now, we can check that for $i=1,\ldots, d$,
    \begin{align*} 
    \bZ\in \MRV(\alpha_i, b_i, \mu_i,\E_d^{(i)})
\end{align*}
where
\begin{equation}\label{eq:MO:alphakbkmukeq}
\begin{aligned}
  \alpha_i &=(2-2^{-(i-1)})\alpha,\\
  b_i(t) &= (b_{\alpha}(t))^{\alpha/\alpha_i} = (b_{\alpha}(t))^{1/(2-2^{-(i-1)})},\\
   \mu_i & \left( \{\bz \in \E_d^{(i)}:z_j>x_j\;\forall\, j\in S\}\right) =\prod_{j=1}^i \left({{x}_{(j)}}\right)^{-\alpha2^{-(j-1)}},
\end{aligned}
\end{equation}
where $S\subseteq \mathbb{I}$, $|S|=i$ with $x_j>0$ for $j\in S$, and ${x}_{(1)}\ge \ldots \ge {x}_{(i)}$ denote the decreasing order statistics of $(x_j)_{j\in S}$
 and $\mu_i(\E_d^{(i+1)})=0$.

\item \emph{Parameters proportional to cardinality of the sets:} Let $\lambda_S=|S|\,\lambda$ where $\lambda>0$ for all $\emptyset\neq S\subseteq \mathbb{I}$. From \eqref{eq:MO:eta} we have
    $\eta_j^S=|S|(d+1)2^{-(d-1)}= |S|(d+1)\beta$ using the definition in \eqref{def:beta}. Following a similar logic as in part (a), we obtain in this case
    \begin{align*}
       \P(Z_1 > x_1,\ldots, Z_d > x_d) & =  \prod_{j=1}^d \left(\overline{F}_{\alpha}(x_{(j})\right)^{\left(1-\frac{j-1}{d+1}\right)2^{-(j-1)}}.
    \end{align*}
Again we can check that for $i=1,\ldots, d$,
      \begin{align*}
    \bZ\in \MRV({\alpha}_i^*, {b}_i^*, {\mu}^*_i,\E_d^{(i)})
\end{align*}
where,
\begin{equation}\label{eq:MO:alphakbkmukpropsize}
\begin{aligned}
  \alpha_i^* &  = \sum_{j=1}^i\Big(1-\frac{j-1}{d+1}\Big)\frac{\alpha}{2^{j-1}} = \alpha_i \frac d{d+1} +   \frac{i\alpha}{(d+1)2^{{i-1}}} ,\\
  b_i^*(t) &= (b_{\alpha}(t))^{\alpha/\alpha_i^*},\\
   \mu_i^* & \left( \{\bz \in \E_d^{(i)}:z_j>x_j \;\forall\,\ j\in S\}\right) =\prod_{j=1}^i \left({x}_{(j)}\right)^{-\alpha\left(1-\frac{j-1}{d+1}\right)2^{-(j-1)}},
\end{aligned}
\end{equation} where $|S|=i$ with $x_j>0$ for $j\in S$
 and $\mu_i^*(\E_d^{(i+1)})=0$.
\end{enumerate}

In both examples of the Marshall-Olkin copula dependence (with identical regularly varying margins), $\bZ$ exhibits hidden regular variation on all cones $\E_d^{(i)}, i=2,\ldots, d$.
\end{example}

\begin{example}[Archimedean  copula (ACIG)]\label{ex:cop:arch}
 This
 Archimedean copula example based on the Laplace transform of the inverse gamma distribution, called ACIG copula in short, appears in \cite{Hua:Joe:2011} with its hidden regular variation discussed in \cite[Example 4.4]{hua:joe:li:2014}. Suppose $\bZ=(Z_1,\ldots,Z_d)$ has an ACIG copula with dependence parameter
    $1<\beta<2$ and identical margins $\ov F_{\alpha}\in \RV_{-\alpha}$, $i=1,\ldots,d$. Then
    $\bZ\in \MRV(\alpha,b_{\alpha},\mu_1,\E_d^{(1)})$ and $\bZ\in\MRV(\alpha\beta,b_{\alpha}^{1/\beta},\mu_2,\E_d^{(i)})$  for $i=2,\ldots, d$.
 In this particular example $\bZ$ exhibits hidden regular variation on $\E_d^{(2)}$ but no further HRV at any subsequent co-ordinate subcone $\E_d^{(i)}, i \ge 3.$
\end{example}

\begin{example}[Asymptotically tail dependent copula]\label{ex:cop:dep}
In the previous examples we observed distributions with regularly varying marginals and copulas exhibiting \emph{asymptotic tail independence} leading to MRV with different indices on different spaces. But there are distributions which exhibit so-called \emph{asymptotic tail dependence} which would lead to MRV with the same index, rate function and limit measure on all subcones $\E_d^{(i)}$; see  \cite{heffernan:2000} for general examples in dimension $d=2$ and  \cite{Charpentier:Segers} for higher dimensional Archimedean copulas exhibiting asymptotic tail dependence.  We illustrate this with one example.
Let $\bZ\sim F$ such that for $\alpha>1$,
\[F(\bx) = 1- \left(1+\sum_{j=1}^d x_j^{\alpha}\right)^{-1}, \quad \bx \in \R_+^d.\]
We can check that the marginals are  {identically Pareto distributed with index $-\alpha$} and hence, the tails are $\RV_{-\alpha}$. Moreover,  $\bZ\in \MRV(\alpha, b(t)= t^{1/\alpha}, \mu, \E_d^{(i)})$ for $i=1,\ldots, d$ where 
\begin{eqnarray} \label{eq:mrvasdep:mu}
            \mu\left(\{\bz\in \E_d:z_j>x_j\; \forall\, j \in S\}\right)=\sum_{j=1}^{|S|} (-1)^{j+1} \sum_{\genfrac{}{}{0pt}{}{k_1<\ldots<k_j}{ k_1,\ldots, k_j \in S}} \left(\sum_{l=1}^j x_{k_l}^{\alpha}\right)^{-1}       
\end{eqnarray}
for $S\subseteq \mathbb{I}$.


\end{example}

\begin{remark}[Gaussian copula]
Gaussian copulas have been widely considered as a key example of asymptotic tail independence, for which coefficients of tail dependence, tail order and hidden regular variation have been studied in this context; see \cite{ledford:tawn:1996,hua:joe:li:2014,furman:kuznetsov:suzitikis:2016}. Surprisingly, the hidden regular variation properties of Gaussian copulas (with regularly varying marginal distributions) are not particularly well understood, especially in dimensions $d\ge 3$. For example, the often used Gaussian copula defined by an equi-correlation correlation matrix does not seem to admit hidden regular variation in general; see \cite{das:fasen:2022a} for details. Thus, we have refrained from using particular examples of Gaussian copulas here.
\end{remark}

\section{Aggregating regularly varying random vectors}\label{sec:mainresult}
In \Cref{sec:intro}, we discussed the principle of one large jump determining the behavior of aggregates of multivariate regularly varying random vectors in the classical framework; here we extend the idea for a more general class of tail events. We start by assuming that individual random vectors have tail equivalent margins and they admit adapted multivariate regular variation  on cones $\E_d^{(i)}, i=1,\ldots, d$ (see Definition \ref{def:amrv}). In our first result, \Cref{mainTheorem}, we consider two independent random vectors which are not necessarily identically distributed and assess the tail behavior of the sum for various tail sets. This theorem forms the basis of many subsequent results where we do assume the underlying vectors to be identically distributed as well.

\begin{theorem} \label{mainTheorem}
Let $\bZ^{(1)}, \bZ^{(2)} \in \R_+^d$ be independent random vectors, each with tail equivalent marginal distributions and $\{\bZ^{(k)}\in \MRV^*(\alpha_i^{(k)},b_i^{(k)},\mu_i^{(k)},\E_d^{(i)}), i=1,\ldots,d; \Delta_k\}$ for $k=1,2$, i.e., they are adapted-MRV on $\R^d_{+}$. Define $\alpha_0^{(k)}=0, \, b_0^{(k)\leftarrow}(t)\equiv 1,\, \mu_0^{(k)}\equiv 1$ and
\[ I(i): =   \,\mathrm{argmax}_{j \in \{0,\ldots,i\}}  \{\bar{c}_j^{(i)}: \bar{c}_j^{(i)}<\infty\}\]
where
\[ \bar{c}_j^{(i)}:= \max_{0\le m\le i}\left\{\limsup_{t\to\infty} \frac{b_{j}^{(1)\leftarrow}(t)b_{i-j}^{(2)\leftarrow}(t)}{b_{m}^{(1)\leftarrow}(t)b_{i-m}^{(2)\leftarrow}(t)} \right\}.\]
Define for $i=1,\ldots, d$, and $m=0,\ldots, i$:
$$c_m^{I(i)}:=\lim_{t\to\infty}\frac{b_{I(i)}^{(1)\leftarrow}(t)b_{i-I(i)}^{(2)\leftarrow}(t)}{b_{m}^{(1)\leftarrow}(t)b_{i-m}^{(2)\leftarrow}(t)}.$$
%
Suppose that for  $k=1,2, $ and each $m=1,\ldots,i-1$, either
$c_m^{I(i)}=0$ or  $\lim_{t\to\infty}b_m^{(k)\leftarrow}(t)/b_{m+1}^{(k)\leftarrow}(t)=0$.
 Then
 \beao
    \{\bZ^{(1)}+\bZ^{(2)}\in\MRV^*(\alpha_{i},b_{i},\mu_{i}^{\oplus},\E_d^{(i)}), i=1,\ldots,d;\Delta^{\oplus}\}
 \eeao
 with $\alpha_i=  \alpha_{I(i)}^{(1)} +\alpha_{i-I(i)}^{(2)}, b_{i}^{\leftarrow}(t)=b_{I(i)}^{(1)\leftarrow}(t)b_{i-{I(i)}}^{(2)\leftarrow}(t), $ and
 \beao
    \mu_i^{\oplus}(A)=\sum_{m=0}^i c_m^{I(i)}\mu_{m,i}^*(A) \quad \text{ for }A\in\mathcal{B}(\E_d^{(i)}),
 \eeao
 where $\mu_{m,i}^*$ is the measure which is uniquely defined on $\cR^{(i)}$ as follows: for \linebreak $A=\{\bz \in \R_+^d: z_j>x_j \;\forall\; j\in S\}\in \cR^{(i)}$ with $S\subseteq \mathbb{I}$, $|S|\ge i$ and $x_j>0$   $\forall j\in S$ we have 
 \begin{align*}
        &\mu_{m,i}^*\left(A\right)\\
         &=\sum_{\genfrac{}{}{0pt}{}{J\subseteq S\cup \{\emptyset\}}{|J|=m}} \mu_{m}^{(1)}\Bigg(\{\bz\in\E_d^{(m)}: z_{j}>x_{j}\;\forall\;j\in J\}\Bigg)
        \mu_{i-m}^{(2)}\,\Bigg(\{\bz\in\E_d^{(i-m)}:  z_{j}>x_{j}\;\forall\; j\in S\backslash J\}\Bigg).
 \end{align*}
 Moreover, $\Delta^{\oplus} \in \{\max(\Delta_1+1,\Delta_2+1), \ldots, \min(\Delta_1+\Delta_2,d)\}$.
 \end{theorem}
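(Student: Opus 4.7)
The plan is to verify adapted-MRV for $\bZ^{(1)}+\bZ^{(2)}$ directly from Definition~\ref{def:amrv}. By Proposition~\ref{prop:rectsetsforM}, $\M$-convergence on each co-ordinate subcone $\E_d^{(i)}$ for $i\le\Delta^{\oplus}$ reduces to verifying, for every rectangular $\mu_i^{\oplus}$-continuity set $A=\{\bz:z_j>x_j,\,j\in S\}\in\cR^{(i)}$ (with $|S|\ge i$), the scalar limit
\[
b_i^{\leftarrow}(t)\,\P\bigl(\bZ^{(1)}+\bZ^{(2)}\in tA\bigr)\longrightarrow \mu_i^{\oplus}(A),\qquad t\to\infty,
\]
while for $i>\Delta^{\oplus}$ one must establish the null-convergence condition \eqref{cond:nullcon} at rate $t^{1/(i(\alpha_1+\gamma))}$.

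The key step is to decompose the joint exceedance event $\{Z_j^{(1)}+Z_j^{(2)}>tx_j,\,j\in S\}$ according to which summand is responsible for the exceedance in each coordinate. Fix $\eps>0$ and split each coordinate-summand value into the regimes \emph{large} ($>(1-\eps)tx_j$), \emph{intermediate} ($\in[\eps tx_j,(1-\eps)tx_j]$), and \emph{small} ($<\eps tx_j$). An exceedance then arises either from at least one summand being large in that coordinate, or from both being intermediate; the latter contribution, together with the complementary ``small + small'' cases, is handled by a one-large-jump style estimate analogous to \eqref{eq:unisub} and vanishes after letting $\eps\downarrow 0$. Parameterising the large-summand patterns by subsets $J\subseteq S$ (with $|J|=m$ denoting the coordinates carried by $\bZ^{(1)}$), and using independence of $\bZ^{(1)}$ and $\bZ^{(2)}$, the main contribution factorises as
\[
\P\bigl(Z_j^{(1)}>(1-\eps)tx_j,\,j\in J\bigr)\,\P\bigl(Z_j^{(2)}>(1-\eps)tx_j,\,j\in S\setminus J\bigr),
\]
to which the MRV hypotheses on $\E_d^{(m)}$ and $\E_d^{(i-m)}$ apply. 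Summing over $J$ with $|J|=m$ assembles $\mu_{m,i}^*$, and dividing by $b_i^{\leftarrow}(t)=b_{I(i)}^{(1)\leftarrow}(t)b_{i-I(i)}^{(2)\leftarrow}(t)$, which by construction is the slowest among $\{b_m^{(1)\leftarrow}b_{i-m}^{(2)\leftarrow}\}_{m=0}^i$, leaves $\sum_{m=0}^i c_m^{I(i)}\mu_{m,i}^*(A)=\mu_i^{\oplus}(A)$ in the limit. Terms with $c_m^{I(i)}=0$ must vanish: here the alternative rate hypothesis $b_m^{(k)\leftarrow}/b_{m+1}^{(k)\leftarrow}\to 0$ is what forbids borderline allocations from sneaking in at leading order. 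For $i>\Delta^{\oplus}$, the same decomposition yields null convergence: each factor is controlled either by the $\mathcal{NC}$ assumption on the corresponding $\bZ^{(k)}$, or by a Markov bound exploiting the finite moment of order $i(\alpha_1+\gamma)$ implicit in $\mathcal{NC}$, so that the rate $t^{1/(i(\alpha_1+\gamma))}$ suffices.

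The main obstacle is twofold. First, the ``intermediate–intermediate'' regime —— where a coordinate is exceeded without either summand individually being large —— must be shown to be genuinely negligible; this is a multivariate refinement of the univariate subexponential estimate and is the reason the ``one large jump per coordinate'' principle survives aggregation. Second, the combinatorial identification of the dominant partitions, captured by $I(i)$ and the coefficients $c_m^{I(i)}$, must be matched by a proof that allocations with $c_m^{I(i)}=0$ are truly subdominant, for which the auxiliary rate assumption $b_m^{(k)\leftarrow}/b_{m+1}^{(k)\leftarrow}\to 0$ plays an essential role in excluding tight boundary cases. Lemma~\ref{Lemma_aux} can be used to clean up residual convolutions against finite-moment perturbations encountered along the way.
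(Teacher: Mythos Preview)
Your outline follows essentially the same route as the paper: reduce to rectangular sets via Proposition~\ref{prop:rectsetsforM}, decompose the event $\{\bZ^{(1)}+\bZ^{(2)}\in tA\}$ coordinate-wise according to which summand carries the exceedance, then show that the ``clean'' partitions $J\sqcup(S\setminus J)$ produce $\sum_m c_m^{I(i)}\mu_{m,i}^*(A)$ while overlap/cross terms vanish. The paper organises this as an upper bound (two-threshold split giving $M_1+M_2$) and a separate lower bound (inclusion--exclusion over the $2^{|S|}$ partitions), but the content is the same as your three-regime split.

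Two places where your explanation is off:

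\textbf{(i) Why the intermediate regime vanishes.} You attribute this to ``a one-large-jump style estimate analogous to \eqref{eq:unisub}''. That is not the mechanism. If coordinate $j$ has both summands in the intermediate band, then \emph{both} $\bZ^{(1)}$ and $\bZ^{(2)}$ pick up an extra large coordinate, and the relevant bound is
\[
b_i^{\leftarrow}(t)\,\P\bigl(Z_{(m+1)}^{(1)}>ct\bigr)\,\P\bigl(Z_{(i-m)}^{(2)}>ct\bigr)\;\longrightarrow\;0,
\]
which holds \emph{precisely} because of the hypothesis ``$c_m^{I(i)}=0$ or $b_m^{(k)\leftarrow}/b_{m+1}^{(k)\leftarrow}\to 0$''. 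In the paper this is isolated as a lemma (\Cref{Lemma 4.2}) and drives both the $M_2$ upper bound and the pairwise intersections in the lower bound. So the overlap terms and the ``$c_m^{I(i)}=0$'' terms are not two separate phenomena as you present them; they are killed by the same rate condition, and that is why the condition appears in the theorem.

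\textbf{(ii) Null convergence.} Your sketch for $i>\Delta^{\oplus}$ appeals to a ``Markov bound exploiting the finite moment of order $i(\alpha_1+\gamma)$ implicit in $\mathcal{NC}$''. But $\mathcal{NC}$ is only a rate statement \eqref{cond:nullcon}; it does not give a finite moment (the moment condition is merely a sufficient criterion mentioned in the remark after Definition~\ref{def:amrv}). The null-convergence part has to come directly from the decomposition and the $\mathcal{NC}$ rates on the factors, not from Markov.

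A minor point: the case $|S|>i$ needs its own bookkeeping (the paper handles it separately in \Cref{Lemma B.3}, showing that only the $m=0$ and $m=i$ terms of $\mu_i^{\oplus}(A)$ survive), and \Cref{Lemma_aux} plays no role in this proof --- it is used later for the L\'evy process results, not here.
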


The proof of \Cref{mainTheorem} is given in \Cref{sec:proofmaintheorem}.

\begin{remark}
Since the output of $\text{argmax}$ may contain multiple elements, $I(i)$ is not defined uniquely; hence, a value for $I(i)$ is often chosen from these outputs  according to convenience.
\end{remark}

We have refrained from stating a general result akin to \Cref{mainTheorem} for adding $n$ random vectors since the parameters of the limit model become notationally cumbersome without providing additional insight; on the other hand, for a variety of joint dependence behavior, we often observe nicer structures appearing.
In the rest of the section, we discuss consequences of \Cref{mainTheorem} on the finite sum of  i.i.d.  random vectors under various assumptions on their dependence structures.

\subsection{All subcones exhibit regular variation}\label{subsec:hrvinallcones}
First, we investigate the case where we add  i.i.d.  random vectors which are multivariate regularly varying on all relevant cones. The results as we will see are direct consequences of \Cref{mainTheorem}. We begin with the well-known model where all components of each vector are  i.i.d.  random variables as well; \Cref{ex:cop:ind} gives the structure of the limit measure in this case. The following proposition provides a slightly general version of this case. Proofs of the results of this subsection are available in \Cref{app:subsec:hrvinallcones}.

\begin{proposition}[Nearly independent case]\label{prop:ind}
Let $\bZ^{(1)},\ldots,\bZ^{(n)}$ be  i.i.d.  random vectors in $\R_+^d$ with tail equivalent marginal distributions and $\bZ^{(1)}\in\MRV(\alpha_i,b_i,\mu_i,\E_d^{(i)})$  for $i=1,\ldots,d$ where $b_i(t)=(b_1(t))^{1/i}$ and $b_1(t)\in \RV_{1/\alpha}$. Then $\alpha_i=i\alpha$ and
         \begin{align}\label{mrv:ind:sum}
               & \sum_{k=1}^n\bZ^{(k)}\in\MRV(\alpha_i,b_i,\E_d^{(i)}) \quad  \text {for } \quad i=1,\ldots,d.
         \end{align}
    Now if for some $\kappa_j>0$, $j=1,\ldots,d$,
           \begin{eqnarray} \label{star}
            \mu_i\left(\{\bz\in \E_d^{(i)}:z_j>x_j\; \forall\, j \in S\}\right)=\prod_{j\in S}\kappa_j x_j^{-\alpha}
           \end{eqnarray}
         for $S\subseteq \mathbb{I}$ with $|S|=i$,   $x_j>0$ and $\mu_i(\mathbb{E}_d^{(i+1)})=0$, $i=1,\ldots, d$, {then }
         \beao                \sum_{k=1}^n\bZ^{(k)}\in\MRV(i\alpha,b_1^{1/i},n^{i}\mu_i,\E_d^{(i)}) \quad  \text {for } \quad i=1,\ldots,d.
         \eeao
\end{proposition}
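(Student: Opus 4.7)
The proof proceeds by induction on the number of summands $n$; the base case $n=1$ is the hypothesis. For the inductive step, I would apply \Cref{mainTheorem} to the independent pair $\bigl(\sum_{k=1}^{n-1}\bZ^{(k)},\,\bZ^{(n)}\bigr)$. By the inductive hypothesis, $\sum_{k=1}^{n-1}\bZ^{(k)}$ is adapted-MRV on every $\E_d^{(i)}$ with tail index $i\alpha$ and scaling $b_i=b_1^{1/i}$, matching those of $\bZ^{(n)}$, and the tail equivalence of marginals is preserved under summation. The side condition in \Cref{mainTheorem}---that for each $m=1,\ldots,i-1$ either $c_m^{I(i)}=0$ or $\lim_{t\to\infty} b_m^{(k)\leftarrow}(t)/b_{m+1}^{(k)\leftarrow}(t)=0$---is verified through the latter: since $b_m^{(k)\leftarrow}(t)=b_1^{\leftarrow}(t^m)\in \RV_{m\alpha}$, Potter's bounds give $b_1^{\leftarrow}(t^m)/b_1^{\leftarrow}(t^{m+1})=O(t^{-\alpha+\delta})\to 0$ for any sufficiently small $\delta>0$. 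Taking $I(i)=0$ then yields the scaling $b_i^{\leftarrow}(t)=b_1^{\leftarrow}(t^i)$ and the tail index $\alpha_i=i\alpha$, establishing \eqref{mrv:ind:sum}.

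For the second conclusion under \eqref{star}, by \Cref{prop:rectsetsforM} it suffices to work with rectangular sets $A=\{\bz: z_j>x_j\ \forall\, j\in S\}$ with $|S|=i$. Each summand in the definition of $\mu_{m,i}^*(A)$ factors, using the inductive hypothesis that the limit measure of $\sum_{k=1}^{n-1}\bZ^{(k)}$ on $\E_d^{(m)}$ equals $(n-1)^m\mu_m$ together with the product form \eqref{star}, as
\[
(n-1)^m \prod_{j\in J}\kappa_j x_j^{-\alpha}\cdot\prod_{j\in S\setminus J}\kappa_j x_j^{-\alpha}=(n-1)^m\mu_i(A),
\]
independently of the choice of $J\subseteq S$ with $|J|=m$. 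Summing over the $\binom{i}{m}$ such $J$ gives $\mu_{m,i}^*(A)=\binom{i}{m}(n-1)^m\mu_i(A)$. Assuming $c_m^{I(i)}=1$ for each $m$, the binomial theorem then yields
\[
\mu_i^{\oplus}(A)=\sum_{m=0}^{i}c_m^{I(i)}\binom{i}{m}(n-1)^m\mu_i(A)=n^i\mu_i(A),
\]
which closes the induction.

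The main technical obstacle is establishing $c_m^{I(i)}=1$ for each $m$. The defining ratio $b_1^{\leftarrow}(t^i)/[b_1^{\leftarrow}(t^m)\,b_1^{\leftarrow}(t^{i-m})]$ lies in $\RV_0$ and equals one identically in the pure-power scaling $b_1(t)=t^{1/\alpha}$. In general, the compatibility of the proposition's hypotheses---that $b_i=b_1^{1/i}$ is a legitimate MRV scaling on each $\E_d^{(i)}$ together with the multiplicative form \eqref{star} of $\mu_i$---restricts the slowly varying component of $b_1$ enough to force this ratio to tend to one; extracting that identity is what produces the clean combinatorial constant $n^i$ in the limit measure, rather than the alternative $1+(n-1)^i$ that one would obtain if only the boundary terms $m\in\{0,i\}$ contributed.
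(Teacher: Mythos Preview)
Your proof takes the same route as the paper's: induction on $n$ via \Cref{mainTheorem}. The paper handles only $n=2$ explicitly, asserting $I(i)=i$ and $c_m^{I(i)}=1$ for all $m=0,\ldots,i$, so that $\mu_i^{\oplus}=\sum_{m=0}^{i}\binom{i}{m}\mu_i=2^{i}\mu_i$, and then says the rest follows by induction. Your argument writes out the inductive step from $n-1$ to $n$ explicitly and produces the factor $n^i$ via the binomial identity $\sum_{m=0}^{i}\binom{i}{m}(n-1)^m=n^i$; this is the same computation, just carried one level further. Your choice $I(i)=0$ is equivalent to the paper's $I(i)=i$ since both summands carry identical scalings $b_m=b_1^{1/m}$, making the product $b_{I(i)}^{(1)\leftarrow}b_{i-I(i)}^{(2)\leftarrow}$ symmetric in the choice. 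Your verification of the side condition through $b_m^{\leftarrow}/b_{m+1}^{\leftarrow}\to 0$ is exactly what the paper uses.

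Where you go further than the paper is in singling out $c_m^{I(i)}=1$ as the delicate point. The paper simply asserts it without comment. You correctly observe that the defining ratio $b_1^{\leftarrow}(t^i)/[b_1^{\leftarrow}(t^m)\,b_1^{\leftarrow}(t^{i-m})]$ is slowly varying and could in principle fail to converge to one. Your heuristic---that compatibility of the hypothesis $b_i=b_1^{1/i}$ with the product form \eqref{star} on every level constrains the slowly-varying part enough to force this limit---points in the right direction but your last paragraph does not turn it into a proof. So you are not missing anything relative to the paper; you are simply more explicit about a step the paper takes for granted. In the pure-power regime $b_1(t)=t^{1/\alpha}$ the identity is immediate, and that is evidently the setting the authors have in mind throughout (cf.\ \Cref{ex:cop:ind}).
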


\begin{remark}
If all components of the random vectors $\bZ^{(1)},\ldots,\bZ^{(n)}$ are completely tail equivalent then $\kappa_1=\kappa_2=\ldots=\kappa_d$.
\end{remark}
{Although  condition \eqref{star} is rather restrictive, the result obtained in \eqref{mrv:ind:sum}, i.e., \linebreak if $\bZ^{(1)}\in\MRV(\alpha_i,b_i ,\E_d^{(i)})$ then $\sum_{k=1}^n\bZ^{(k)}\in\MRV(\alpha_i,b_i,\E_d^{(i)})$, holds under much weaker assumptions. The  particular assumption \eqref{star} 
helps only to calculate the exact form of the limit distribution. The following result provides a further case and helps in creating many examples.}

\begin{proposition}\label{prop:mrv:tailsremainsame}
Let $\bZ^{(1)},\ldots,\bZ^{(n)}$ be  i.i.d.  random vectors in $\R_+^d$ with tail equivalent marginal distributions and  $\bZ^{(1)}\in\MRV(\alpha_i,b_i,\mu_i,\E_d^{(i)})$  for $i=1,\ldots,d$.
Moreover, assume that $\alpha_i<\alpha_m+\alpha_{i-m}$ for all $m= 1,\ldots, i-1$ and $i = 2,\ldots,d$. Then
 \beao
                \sum_{k=1}^n\bZ^{(k)}\in\MRV(\alpha_i,b_i,n\mu_i,\E_d^{(i)}) \quad  \text {for } \quad i=1,\ldots,d.
         \eeao
\end{proposition}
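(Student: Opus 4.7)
The plan is to prove the statement by induction on $n$, using \Cref{mainTheorem} at every step. The base case $n=1$ is trivial. For the inductive step, assume the conclusion holds for $n-1$, so that the partial sum $\bS:=\sum_{k=1}^{n-1}\bZ^{(k)}$ satisfies $\bS\in\MRV(\alpha_i,b_i,(n-1)\mu_i,\E_d^{(i)})$ for every $i=1,\ldots,d$. Since MRV is assumed on every subcone, both $\bS$ and $\bZ^{(n)}$ are adapted-MRV with $\Delta=d$, and the marginals of $\bS$ remain tail equivalent to those of $\bZ^{(n)}$ thanks to the univariate subexponential asymptotics~\eqref{eq:unisub} applied componentwise. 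Hence the standing hypotheses of \Cref{mainTheorem} are met by the pair $(\bS,\bZ^{(n)})$.

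The quantitative conditions of \Cref{mainTheorem} are then checked as follows. Both summands share the same rates $b_i^{(1)}=b_i^{(2)}=b_i\in\RV_{1/\alpha_i}$, so $b_j^{(1)\leftarrow}(t)\,b_{i-j}^{(2)\leftarrow}(t)\in\RV_{\alpha_j+\alpha_{i-j}}$. Combining the assumption $\alpha_i<\alpha_m+\alpha_{i-m}$ for $1\le m\le i-1$ with $\alpha_0+\alpha_i=\alpha_i$ shows that $\alpha_j+\alpha_{i-j}$ attains its minimum value $\alpha_i$ over $j\in\{0,\ldots,i\}$ exactly at $j\in\{0,i\}$, and strictly exceeds $\alpha_i$ for all other $j$. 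Consequently $\bar c_j^{(i)}$ is finite only at $j\in\{0,i\}$ and equals $1$ there, so one may choose $I(i)=i$ (the choice $I(i)=0$ works symmetrically). A direct limit computation then yields $c_0^{I(i)}=c_i^{I(i)}=1$ and $c_m^{I(i)}=0$ for $m\in\{1,\ldots,i-1\}$, so the side condition of \Cref{mainTheorem} is vacuously satisfied; moreover the output rate $b_i^{(1)\leftarrow}b_0^{(2)\leftarrow}=b_i^\leftarrow$ and the index $\alpha_i$ are both preserved.

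Substituting into $\mu_i^\oplus=\sum_{m=0}^i c_m^{I(i)}\mu_{m,i}^*$, only the endpoint terms survive. Using the convention $\mu_0^{(k)}\equiv 1$ together with the definition of $\mu_{m,i}^*$, the $m=0$ term collapses to $\mu_i^{(2)}=\mu_i$ (the tail measure of $\bZ^{(n)}$), while the $m=i$ term collapses to $\mu_i^{(1)}=(n-1)\mu_i$ (the tail measure of $\bS$, by the inductive hypothesis). Summing gives $\mu_i^\oplus=n\mu_i$, and \Cref{mainTheorem} yields $\sum_{k=1}^n\bZ^{(k)}\in\MRV(\alpha_i,b_i,n\mu_i,\E_d^{(i)})$. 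The induction closes because the rate, the index, and the proportionality-to-$\mu_i$ of the limit measure are preserved at each step, and therefore the hypothesis $\alpha_i<\alpha_m+\alpha_{i-m}$ continues to apply.

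The main difficulty is not the induction itself but the precise identification of $I(i)$ and of the ratios $c_m^{I(i)}$: the strict inequality $\alpha_i<\alpha_m+\alpha_{i-m}$ is exactly the quantitative translation of the statement that no pair of strict-interior subcones $\E_d^{(m)}$ and $\E_d^{(i-m)}$ can emulate the tail behavior on $\E_d^{(i)}$, and this is what forces only the endpoint terms $m\in\{0,i\}$ to survive in $\mu_i^\oplus$ and delivers the linear-in-$n$ limit measure. A routine auxiliary point is that tail equivalence of marginals propagates through partial sums, which is immediate from~\eqref{eq:unisub}.
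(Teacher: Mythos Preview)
Your proof is correct and follows essentially the same approach as the paper: apply \Cref{mainTheorem} and induct on $n$, using the strict inequality $\alpha_i<\alpha_m+\alpha_{i-m}$ to force $I(i)=i$ with $c_m^{I(i)}=0$ for $1\le m\le i-1$ and $c_0^{I(i)}=c_i^{I(i)}=1$, so that only the endpoint contributions $\mu_{0,i}^*$ and $\mu_{i,i}^*$ survive in $\mu_i^\oplus$. The paper merely carries out the $n=2$ case explicitly and then writes ``the final result can now be derived using induction (which we skip here)''; you have spelled out the full inductive step and also explicitly noted the auxiliary points (tail equivalence of the marginals of the partial sum via \eqref{eq:unisub}, and that the side condition of \Cref{mainTheorem} is vacuous here), which the paper leaves implicit.
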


\begin{remark}
Clearly,  a sufficient condition for \Cref{prop:mrv:tailsremainsame} to hold would be to assume that $c_m^{I(i)}=0, m=1, \ldots, i-1$ and $i = 2,\ldots,d$ instead of  $\alpha_i<\alpha_m+\alpha_{i-m}$ for all $m= 1,\ldots, i-1$ and $i = 2,\ldots,d$. This requires the notation of \Cref{mainTheorem}, and we prefer the latter in lieu of interpretability.
\end{remark}

\begin{remark}
Both in  Propositions \ref{prop:ind} and  \ref{prop:mrv:tailsremainsame}, we observe that while adding finitely many  random vectors $\bZ^{(1)}, \ldots, \bZ^{(n)}$, we obtain $\sum_{k=1}^n \bZ^{(k)} \in \MRV(\alpha_i,b_i, \mu_{i,n}^{\oplus}, \E_d^{(i)})$. The  indices of regular variation $\alpha_i$ and the scaling parameter $b_i$ remain the same no matter how many vectors we add although the measure $\mu_{i,n}^{\oplus}$ are quite different for different values of $n$. Note the following.
\begin{enumerate}[i)]
    \item Under the assumptions of \Cref{prop:ind}, we have, $\alpha_i=\alpha_{m}+\alpha_{i-m}$ for {$m=0,\ldots,i$,}  and  $\mu_{i,n}^{\oplus}=n^i\mu_i$. Interestingly,  $\alpha_i=\alpha_{m}+\alpha_{i-m}$ for {$m=0,\ldots, i$} does not necessarily imply that $\mu_{i,n}^{\oplus}=n^i\mu_i$.
    \item Under the assumptions of \Cref{prop:mrv:tailsremainsame}, we have, $\alpha_i <\alpha_{m}+\alpha_{i-m}$ for {$m=1,\ldots,i-1$,}  which turns out to be a sufficient condition for $\mu_{i,n}^{\oplus}=n\mu_i$.
\end{enumerate}
\end{remark}

\begin{example}[Marshall-Olkin dependence] \label{example:Mashall-Olkin}
For this example let $\bZ,\bZ^{(1)},\ldots,\bZ^{(n)}$ be  i.i.d.  random vectors in $\R_+^d$ with all marginal distributions following $F_{\alpha}$ where $\overline{F}_{\alpha}\in \RV_{-\alpha}$ and $b_{\alpha}(t)=\overline{F}^{\leftarrow}(1/t)$ for some $\alpha>0$. We consider two different Marshall-Olkin dependence parameters for $\bZ$ which has copula $\widehat{C}_{\MO}$ given by \eqref{eq:scop:mo}; see \Cref{ex:cop:mo}.
\begin{enumerate}[(a)]
    \item \emph{Equal parameter:} Suppose the parameters  defining the Marshall-Olkin copula are given by \eqref{def:beta}. Then from \Cref{ex:cop:mo}(a) we have $ \bZ\in \MRV(\alpha_i, b_i, \mu_i,\E_d^{(i)})$ for $i=1,\ldots, d$ where  {$\alpha_i, b_i, \mu_i$ } are given by \eqref{eq:MO:alphakbkmukeq}. Now observe that for fixed  $i=1,\ldots, d$, and \linebreak $m=1,\ldots, i-1$,
    \begin{align}
        \alpha_{m}+\alpha_{i-m} & = (2-2^{-(m-1)})\alpha + (2-2^{-(i-m-1)})\alpha \nonumber\\
        & \ge 2\alpha \nonumber\\
        &> (2-2^{-(i-1)})\alpha = \alpha_i. \label{eq:alphaorder}
    \end{align}
    Hence, by \Cref{prop:mrv:tailsremainsame}, we have
     \beao
                        \sum_{k=1}^n\bZ^{(k)}\in\MRV(\alpha_i,b_i,n\mu_i,\E_d^{(i)}).
                \eeao
 \item \emph{Proportional parameter:}  We know from \Cref{ex:cop:mo}(b) that $ \bZ\in \MRV(\alpha_i^*, b_i^*, \mu_i^*,\E_d^{(i)})$ for $i=1,\ldots, d$ where $\alpha_i^*, b_i^*, \mu_i^*$ are given by \eqref{eq:MO:alphakbkmukpropsize}. Again note that for fixed $i=1,\ldots, d$, $m=1,\ldots, i-1$, and $\alpha_i$  as in \eqref{eq:MO:alphakbkmukeq},
    \begin{align*}
        \alpha_{m}^*+\alpha_{i-m}^* & = \alpha_m\frac{d}{d+1}+ \frac{m\alpha}{(d+1)2^{m-1}}+ \alpha_{i-m}\frac{d}{d+1}+ \frac{(i-m)\alpha}{(d+1)2^{i-m-1}}\\
        & = (\alpha_m+\alpha_{i-m})\frac{d}{d+1} + \frac{\alpha}{(d+1)}\left(\frac{m}{2^{m-1}} + \frac{i-m}{2^{i-m-1}}\right)\\
        & > \alpha_i\frac{d}{d+1} + \frac{\alpha}{(d+1)}\left(\frac{m}{2^{i-1}} + \frac{i-m}{2^{i-1}}\right) \quad \text{(using \eqref{eq:alphaorder})}\\
       & = \alpha_i^*.
    \end{align*}
    Hence, by \Cref{prop:mrv:tailsremainsame}, we have
     \beao
                        \sum_{k=1}^n\bZ^{(k)}\in\MRV(\alpha_i^*,b_i^*,n\mu_i^*,\E_d^{(i)}).
                \eeao
\end{enumerate}
\end{example}

\begin{example}[Archimedean copula] \label{example:Archimedean}
Referring to \Cref{ex:cop:arch}, suppose we have  i.i.d.  random vectors $\bZ^{(1)}, \ldots, \bZ^{(n)}$ with identical margins $F_{\alpha}$ so that $\overline{F}_{\alpha}\in \RV_{-\alpha}, b_{\alpha} = \overline{F}^{\leftarrow}(1/t)$ and they admit an ACIG copula as described with dependence parameter $1<\beta<2$. Then we have $\bZ^{(1)}\in \MRV(\alpha,b_{\alpha},\mu_1,\E_d^{(1)})$ and {$\bZ^{(1)}\in\MRV(\alpha\beta,b_{\alpha}^{1/\beta},\mu_2,\E_d^{(i)})$ } for $i=2,\ldots, d$. Now, clearly the conditions for \Cref{prop:mrv:tailsremainsame} are satisfied and hence, we have $\sum_{k=1}^n\bZ^{(k)}\in \MRV(\alpha,b_{\alpha},n\mu_1,\E_d^{(1)})$ and {$\sum_{k=1}^n\bZ^{(k)} \in\MRV(\alpha\beta,b_{\alpha}^{1/\beta},n\mu_2,\E_d^{(i)})$ } for $i=2,\ldots, d$.
\end{example}

The two extreme cases of dependence considered in general are the case of fully independent components for $\bZ^{(1)}$, which is covered in \Cref{prop:ind}, and the case where the components of $\bZ^{(1)}$ are dependent such that $\bZ^{(1)}\in\MRV(\alpha,b,\mu,\E_d^{(i)})$ for $i=1,\ldots,d$. The following corollary addresses the latter case. 

\begin{corollary}[Dependent case, corollary to Proposition \ref{prop:mrv:tailsremainsame}]\label{cor:asymdep}
Let $\bZ^{(1)},\ldots,\bZ^{(n)}$ be  i.i.d.  random vectors in $\R_+^d$ {with tail equivalent marginal distributions }
and \linebreak $\bZ^{(1)}\in\MRV(\alpha_i,b_i,\mu_i,\E_d^{(i)})$  for $i=1,\ldots,d$. Moreover,  $(\alpha_i,b_i,\mu_i,\E_d^{(i)}) = (\alpha,b,\mu,\E_d^{(i)})$ for $i=1,\ldots, i^*$ for some $ i^*\le d$. Then
\beao
                \sum_{k=1}^n\bZ^{(k)}\in\MRV(\alpha,b,n\mu,\E_d^{(i)}) \quad  \text {for } \quad i=1,\ldots,  i^*.
         \eeao
\end{corollary}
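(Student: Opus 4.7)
The plan is to proceed by induction on $n$ and apply \Cref{mainTheorem} at each step, observing that in the fully matched setting $(\alpha_j,b_j,\mu_j)=(\alpha,b,\mu)$ for $j\le i^*$ its technical separation condition is in fact automatically satisfied through a careful choice of $I(i)$.

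For the base case $n=1$ the conclusion is the hypothesis itself. For the induction step, set $\bS_{n-1}=\sum_{k=1}^{n-1}\bZ^{(k)}$ and assume $\bS_{n-1}\in\MRV(\alpha,b,(n-1)\mu,\E_d^{(j)})$ for every $j\le i^*$. Apply \Cref{mainTheorem} to the independent pair $(\bS_{n-1},\bZ^{(n)})$. Fix $i\in\{1,\ldots,i^*\}$; since $b_j^{(1)}=b_j^{(2)}=b$ for $1\le j\le i^*$ and $b_0^{\leftarrow}\equiv 1$, the products entering $\bar c_j^{(i)}$ satisfy
$$b_j^{(1)\leftarrow}(t)\,b_{i-j}^{(2)\leftarrow}(t)\;=\;\begin{cases} b^{\leftarrow}(t), & j\in\{0,i\},\\[2pt] \bigl(b^{\leftarrow}(t)\bigr)^{2}, & j\in\{1,\ldots,i-1\}. \end{cases}$$
The interior case grows strictly faster, so $\bar c_j^{(i)}=\infty$ for $j\in\{1,\ldots,i-1\}$ and $\bar c_j^{(i)}=1$ for $j\in\{0,i\}$; take $I(i)=0$. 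This yields $b_i^{\leftarrow}(t)=b^{\leftarrow}(t)$, matching the desired scaling, together with $c_m^{I(i)}=1$ for $m\in\{0,i\}$ and $c_m^{I(i)}=0$ for $m\in\{1,\ldots,i-1\}$. In particular the separation condition of \Cref{mainTheorem} holds for this pair, so the theorem delivers $\bS_n=\bS_{n-1}+\bZ^{(n)}\in\MRV(\alpha,b,\mu_i^{\oplus},\E_d^{(i)})$.

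It remains to identify $\mu_i^{\oplus}$. Using the formula of \Cref{mainTheorem} with the conventions $\mu_0\equiv 1$, $\mu_i^{(1)}=(n-1)\mu$, $\mu_i^{(2)}=\mu$,
$$\mu_i^{\oplus}\;=\;c_0^{I(i)}\mu_{0,i}^{*}+c_i^{I(i)}\mu_{i,i}^{*}\;=\;\mu_{0,i}^{*}+\mu_{i,i}^{*}.$$
Evaluating on $A=\{\bz\in\R_+^d: z_j>x_j\;\forall j\in S\}\in\cR^{(i)}$ with $|S|=i$, the only non-trivial summand in $\mu_{0,i}^{*}$ corresponds to $J=\emptyset$ and the only non-trivial summand in $\mu_{i,i}^{*}$ corresponds to $J=S$, giving $\mu_{0,i}^{*}(A)=\mu(A)$ and $\mu_{i,i}^{*}(A)=(n-1)\mu(A)$, hence $\mu_i^{\oplus}(A)=n\mu(A)$. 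By \Cref{prop:rectsetsforM} this pointwise identification on the $\pi$-system $\cR^{(i)}$ extends to all of $\E_d^{(i)}$, closing the induction.

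The only genuinely delicate point, which I expect to be the main obstacle to someone reading the argument quickly, is the apparent failure of \Cref{mainTheorem}'s technical hypothesis when all the rates coincide: the alternative clause $b_m^{\leftarrow}/b_{m+1}^{\leftarrow}\to 0$ is vacuous here. The resolution is the slightly counter-intuitive observation that one should select $I(i)$ at the \emph{extreme} indices $\{0,i\}$ rather than the interior ones — this makes the numerator of $c_m^{I(i)}$ the dominated term $b^{\leftarrow}$ instead of the dominant $(b^{\leftarrow})^2$, which in turn forces $c_m^{I(i)}=0$ for interior $m$ and validates the invocation of \Cref{mainTheorem}.
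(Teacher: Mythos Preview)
Your argument is correct and follows the same underlying mechanism as the paper, but you take a detour where the paper is direct. The paper's proof is one line: since $\alpha_i=\alpha<\alpha+\alpha=\alpha_m+\alpha_{i-m}$ for every $1\le m\le i-1$ and $i\le i^*$, the hypothesis of \Cref{prop:mrv:tailsremainsame} is met and the conclusion follows immediately. What you have done is to unwind the proof of \Cref{prop:mrv:tailsremainsame} in this special case by applying \Cref{mainTheorem} inductively; the calculation you perform (choosing $I(i)\in\{0,i\}$, finding $c_m^{I(i)}=0$ for interior $m$) is exactly the content of that proposition's proof.

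Your closing paragraph overstates the difficulty. There is nothing delicate here: the strict inequality $\alpha<2\alpha$ forces $b^{\leftarrow}(t)/(b^{\leftarrow}(t))^2\to 0$, so $c_m^{I(i)}=0$ for interior $m$ is automatic once $I(i)$ is taken at an extreme index. This is precisely the situation \Cref{prop:mrv:tailsremainsame} is designed to capture, and the paper's proof of that proposition already selects $I(i)=i$ without comment.

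One small gap: your verification of $\mu_i^{\oplus}(A)=n\mu(A)$ is carried out only for $|S|=i$, but $\cR^{(i)}$ contains rectangular sets with $|S|>i$ as well, and these do not arise as intersections of sets with $|S|=i$ within the class you checked. You therefore cannot yet appeal to \Cref{prop:rectsetsforM}. The missing case is handled by \Cref{Lemma B.3}, which in the present setting gives $\mu_i^{\oplus}(A)=c_i^{I(i)}\mu_i^{(1)}(A)+c_0^{I(i)}\mu_i^{(2)}(A)=(n-1)\mu(A)+\mu(A)=n\mu(A)$ for $|S|>i$ too.
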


\begin{example}
 \Cref{ex:cop:dep} exhibiting asymptotic tail dependence admits the property $\bZ^{(1)}\in\MRV(\alpha,b,\mu,\E_d^{(i)})$ for $i=1,\ldots, d$  and hence, we can compute tail asymptotics  of its convolution using \Cref{cor:asymdep}.
\end{example}
In all the propositions, corollaries and hence examples of this section, we observe that if $\bZ^{(1)} \in \MRV(\alpha_i, b_i, \mu_i, \E_d^{(i)})$, then their finite sum $\sum_{k=1}^n \bZ^{(k)} \in \MRV(\alpha_i, b_i, C_{n,i} \mu_i, \E_d^{(i)})$
for some constant $C_{n,i}>0$. The ``one large jump'' phenomenon is observed here, in the sense of equivalence of the first and last expressions in \eqref{eq:nMRV} or \eqref{eq:nMRVgeneral} if $C_{n,i}=n$; cf. \Cref{prop:mrv:tailsremainsame}. But $C_{n,i}$ is not necessarily $n$, as for example in \Cref{prop:ind}, and this is a case which we think of as a phenomenon of ``more than one large jump'' or ``a few large jumps''.  Note that in both cases mentioned here, we did assume  $\bZ^{(1)}$ to have MRV on all cones, but not be strictly adapted-MRV. In \Cref{subsec:nohrvinallcone}, we illustrate that a similar principle holds, even under the assumption of adapted-MRV, although the characterizing jumps are now of the form \eqref{eq:nMRVgeneral2} which relates to ``a few large jumps'' phenomenon.

\subsection{Not all subcones necessarily exhibit regular variation} \label{subsec:nohrvinallcone}
In certain contexts, we may be interested in adding random vectors which are not necessarily MRV on all relevant cones. For example, we may have a sequence of  i.i.d.  random vectors for which not all components are non-zero in each realization. An extension of such aggregation to random sums lead to general compound Poisson or L\'evy processes, see \Cref{sec:randomsum} for details. In this section, we concentrate on a few such examples. The general structure for the limit measures in such problems are often not quite apparent.

\begin{proposition}\label{prop:Delta1}
Let $\bZ^{(1)},\ldots, \bZ^{(n)} \in \R_+^d$ be  i.i.d.   random vectors
with  tail equivalent marginal distributions which are $\RV_{-\alpha}$  and let $\{\bZ^{(1)}\in \MRV^*(\alpha_i,b_i,\mu_i,\E_d^{(i)}), i=1,\ldots,d; \Delta=1\}$
 with $\alpha_1=\alpha$. Then $$
 \left\{\sum_{k=1}^n\bZ^{(k)}\in \MRV^*(\alpha_{i,n},b_{i,n},\mu_{i,n}^{\oplus},\E_d^{(i)}), i=1,\ldots,d;\Delta= \min\{d,n\}\right\}.$$ Specifically, for $i=1,\ldots,d$ we have the following:
\begin{enumerate}[(a)]
    \item For $n\geq i$,
    \begin{align}\label{eq:mrv:muiplusdelta1}
     \sum_{k=1}^n\bZ^{(k)}\in\MRV\left(\alpha_{i,n}=i\alpha,b_{i,n}=b_1^{1/i},\mu_{i,n}^{\oplus}, \, \E_d^{(i)}\right),
    \end{align}
    where
    \begin{align}\label{eq:limit:muiplusdelta1}
    \mu_{i,n}^{\oplus}\Big(\{\bz\in \E_d^{(i)}:z_j>x_j\; \forall\, j \in S\}\Big)= \frac{n!}{(n-i)!}\prod_{j\in S}
    \mu_1\left(\{\bz\in \E_d^{(i)}:z_j>x_j\}\right),
    \end{align}
    for $S\subseteq \mathbb{I}$ with $|S|=i$, $x_j>0$ for $j\in S$ {and $\mu_{i,n}^{\oplus}(\mathbb{E}_d^{(i+1)})=0$}.
    \item For $1\leq n<i$,
     \begin{align}\label{eq:mrv:nciplusdelta1}
     \sum_{k=1}^n\bZ^{(k)}\in\mathcal{NC}\left(b_{i,n}(t)=t^{1/i(\alpha+\gamma)}, \E_d^{(i)}\right).
    \end{align}
\end{enumerate}
\end{proposition}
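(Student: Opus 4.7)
The proof goes by induction on $n$, writing $\bS_n := \sum_{k=1}^n \bZ^{(k)} = \bS_{n-1} + \bZ^{(n)}$ and invoking \Cref{mainTheorem}. The base case $n=1$ is immediate: part (a) with $i=1$ is the hypothesis $\bZ^{(1)} \in \MRV(\alpha, b_1, \mu_1, \E_d^{(1)})$ (noting $1!/0! = 1$), and part (b) for $i \geq 2$ is precisely the null-convergence condition built into adapted-MRV with $\Delta = 1$.

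For the inductive step, the induction hypothesis makes $\bS_{n-1}$ adapted-MRV with $\Delta_{n-1} = \min(d, n-1)$ while $\bZ^{(n)}$ is adapted-MRV with $\Delta = 1$. The rate-separation hypothesis of \Cref{mainTheorem} is clear because the inverse-rate functions $b_m^{(k)\leftarrow}$ are powers of $t$ with strictly increasing exponents ($m\alpha$ in the MRV range, $m(\alpha+\gamma)$ in the null range), so $b_m^{(k)\leftarrow}/b_{m+1}^{(k)\leftarrow} \to 0$ for $k=1,2$. A direct comparison of the exponents of the products $b_m^{(1)\leftarrow}(t)\, b_{i-m}^{(2)\leftarrow}(t)$ over $m \in \{0,\ldots,i\}$ gives: for $i \leq n-1$ the minimal exponent is $i\alpha$ attained at $m \in \{i-1,i\}$; for $i=n$ the unique argmin is $m=n-1$ with exponent $n\alpha$; for $i>n$ every $m$ forces a factor $\mu_r^{(k)} \equiv 0$ with $r \geq 2$. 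In the first two cases the theorem delivers $b_{i,n} = b_1^{1/i}$, $\alpha_{i,n} = i\alpha$, and $\mu_i^{\oplus} = \mu_{i,i}^* + \mu_{i-1,i}^*$ (only the second term if $i=n$); substituting the inductive formula $\mu_{m,n-1}^{\oplus}(\{z_j > x_j \;\forall j\in S'\}) = \tfrac{(n-1)!}{(n-1-m)!}\prod_{j \in S'} \mu_1(\{z_j > x_j\})$ for $|S'|=m$, summing the $\mu_{i-1,i}^*$ term over the $i$ choices of the ``missing'' coordinate in $S$, and using the identity $\tfrac{(n-1)!}{(n-1-i)!} + i \cdot \tfrac{(n-1)!}{(n-i)!} = \tfrac{n!}{(n-i)!}$ yields the claimed expression \eqref{eq:limit:muiplusdelta1}.

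In the third case $i>n$, \Cref{mainTheorem} produces $\mu_{i,n}^{\oplus} \equiv 0$, giving null convergence of $\bS_n$ on $\E_d^{(i)}$ but only at the rate delivered by the theorem, which in general is strictly \emph{larger} than the adapted-MRV target $t^{1/(i(\alpha+\gamma))}$; since null convergence at a larger rate does not imply it at a smaller one, this rate upgrade is the main obstacle of the proof. To address it, I would bound $\P(\bS_n/t^{1/(i(\alpha+\gamma))} \in A)$ directly for $A = \{z_j > x_j \;\forall j \in S\}$ by decomposing according to the map $\pi : S \to \{1,\ldots,n\}$ assigning each coordinate $j$ to the index of its largest-contributing summand; since $i>n$, the pigeonhole principle forces some $r_k := |\pi^{-1}(k)| \geq 2$; then use independence of the $\bZ^{(k)}$ to factor the probability, and exploit the single-vector null convergence on $\E_d^{(r_k)}$ at rate $t^{1/(r_k(\alpha+\gamma))}$ together with the regularly varying marginals governing the $r_k = 1$ factors to show the resulting bound is $o(1/t)$, thereby completing the adapted-MRV structure for $\bS_n$ with $\Delta = \min\{d,n\}$.
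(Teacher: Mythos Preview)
Your approach mirrors the paper's closely: both argue by induction and repeatedly apply \Cref{mainTheorem} to $\bS_{n-1}+\bZ^{(n)}$, with the paper merely organizing the same argument as an outer induction on the cone index $i_0$ and an inner induction on $n$. Your identification of $I(i)$ for $i\le n$ and the combinatorial identity
\[
\frac{(n-1)!}{(n-1-i)!} \;+\; i\cdot\frac{(n-1)!}{(n-i)!} \;=\; \frac{n!}{(n-i)!}
\]
coincide exactly with the paper's computations in its parts 2(ii)--(iii).

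On the null-convergence case $i>n$ you are right that the scaling produced by \Cref{mainTheorem} is strictly larger than the target $t^{1/(i(\alpha+\gamma))}$, so the theorem alone does not yield \eqref{eq:mrv:nciplusdelta1}. The paper reaches the same impasse and dispatches it in one line which, read literally, has the implication reversed (knowing $\widetilde b^{\leftarrow}\,\P(\bS_n\in tA)\to 0$ with the \emph{smaller} $\widetilde b^{\leftarrow}$ does not give the same with the larger $b_{i_0}^{\leftarrow}$). Your pigeonhole route is a more honest fix, but as sketched it does not quite recover the stated rate either: if $a$ of the summands contribute a single coordinate, those factors only decay like $u^{-(\alpha-\varepsilon)}$ via Potter bounds, so the product is at best $o\bigl(u^{-[i\alpha+(i-a)\gamma-a\varepsilon]}\bigr)$, which falls short of $o(u^{-i(\alpha+\gamma)})$ whenever $a\ge 1$. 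What your argument \emph{does} give is null convergence at any $\gamma'<(i-a)\gamma/i$ with $i-a\ge i-n+1\ge 2$, hence at a single $\gamma'\in(0,2\gamma/d)$ working for all $i>n$; since adapted-MRV only requires \emph{some} such $\gamma'$, the headline conclusion that $\bS_n$ is adapted-MRV with $\Delta=\min(d,n)$ survives. The literal form of \eqref{eq:mrv:nciplusdelta1} with the original $\gamma$ needs either a sharper bound or the understanding that $\gamma$ may shrink along the induction.
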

The proof of \Cref{prop:Delta1} is in \Cref{sec:proofpropDelta1}.
\begin{remark}\label{rem:delta1} In \Cref{prop:Delta1}, if the marginal distributions are completely tail equivalent with distribution functions $F_j, j=1,\ldots, d$ and $b_1(t)=\overline{F}_1^{\leftarrow }(1/t)$, then $\mu_{i,n}^{\oplus}$ in \eqref{eq:limit:muiplusdelta1} is given by
 \begin{eqnarray*}
           \mu_{i,n}^{\oplus}\Big(\{\bz\in \E_d^{(i)}:z_j>x_j\; \forall\, j \in S\}\Big)=\frac{n!}{(n-i)!}\prod_{j\in S} x_j^{-\alpha}.
           \end{eqnarray*}
\end{remark}

\begin{remark}\label{rem2:delta1}
For the conclusion of \Cref{prop:Delta1} to hold,
the random variables $\bZ^{(1)}, \bZ^{(2)},\ldots$ need not be identically distributed as long as they are independent and are all adapted-MRV with the same sets of parameters. The proof follows by similar arguments as the proof of \Cref{prop:Delta1} and is skipped.
\end{remark}

 The phenomenon of a ``few large jumps" holds here too, as illustrated next. Assume that in Proposition \ref{prop:Delta1}, the marginal distributions are completely tail equivalent as in \Cref{rem:delta1}; and $\bZ=(Z_{1},\ldots,Z_{d})\sim \bZ^{(1)}$. Without loss of generality let $tA$ be the tail event of interest where $A=\{\bz\in \E_d^{(i)}:z_j>x_j\; \forall\; j\in\{1,\ldots, i\}\}$. Note that from \eqref{eq:mrv:muiplusdelta1} and \eqref{eq:limit:muiplusdelta1}, we can infer that in fact as $t\to \infty$,
\begin{align*}
\P(\bZ^{(1)}+\ldots+\bZ^{(n)} \in tA) & \sim \frac{1}{(b_1^{\leftarrow}(t)^{i})}
\frac{n!}{(n-i)!} \prod_{j=1}^i x_j^{-\alpha}\\
        & = C_{n,i}  \prod_{j=1}^i \frac{x_j^{-\alpha}}{b_1^{\leftarrow}(t)} \\
        &\sim C_{n,i}  \prod_{j=1}^i \P(Z_{j}>tx_i),
\end{align*}
where $C_{n,i}= (n!)/((n-i)!)$. Hence $\bZ^{(1)}+\ldots+\bZ^{(n)} \in tA$ occurs at the same rate with which  $i$ independent univariate marginals cross their respective thresholds, indicating $i$ many large jumps. The constant $C_{n,i}$ gives the number of possible choices of independent jumps, here the marginals jumps counted are all from different variables $\bZ^{(k)}$ .

In the rest of the section we provide examples exhibiting \Cref{prop:Delta1} and its possible generalisation.

\begin{example}\label{ex:mrvsum:delta1}
Let $(X^{(k)})_{k\in\N}$ be a sequence of i.i.d.  random variables with distribution function $F$ and $\overline{F}\in \RV_{-\alpha}, \alpha>0$. Let $(\bB^{(k)})_{k\in\N}$ be  i.i.d.  random vectors  taking values in $\{e_1,\ldots,e_d\}$ with $\P(\bB^{(1)}=e_l)=p_l\ge 0, \sum_l p_l=1$, and $e_l=(0,\ldots,0,1,0,\ldots,0)  \in \{0,1\}^d$ where the only non-zero entry $1$ is at the $l$-th place. Define $\bY^{(k)}:= X^{(k)}\bB^{(k)}, k\in\N$. Moreover, let $(\bepsilon^{(k)})_{k\in\N}$ be an i.i.d. sequence of random vectors with $\E\|\bepsilon^{(k)}\|^{d(\alpha+\theta)}<\infty$ for some $\theta>0$. Finally, also assume that
 $X^{(1)},X^{(2)},\ldots ,\bB^{(1)},\bB^{(2)},\ldots,\bepsilon^{(1)},\bepsilon^{(2)},\ldots$ are independent.
Then $\bZ^{(k)}:= \bY^{(k)}+\bepsilon^{(k)}$, $k\in\N$, are a sequence of i.i.d. adapted-MRV random vectors with $\Delta=1$ (cf. \Cref{Lemma_aux}) and hence, \Cref{prop:Delta1} (along with \Cref{rem2:delta1}) provides the tail asymptotic behavior of $\sum_{k=1}^n \bZ^{(k)}$ for any $n\ge 1$.
\end{example}

The neat expressions for limit measures and tail indices as obtained by \Cref{prop:Delta1} in aggregating i.i.d adapted-MRV random vectors with $\Delta=1$ does not extend as nicely for $\Delta>1$. Nevertheless, we may still be able to find a pattern in certain cases and our next example with $\Delta=2$ elaborates on this.

\begin{example} \label{ex:mrvsum:delta2}
The setting is similar to \Cref{ex:mrvsum:delta1}. Let $X^{(1)},X^{(2)},\ldots, $ \linebreak $\widetilde X^{(1)},\widetilde X^{(2)},\ldots \sim F$ be  i.i.d.  random variables   with $\overline{F}\in \RV_{-\alpha}, \alpha>0$. Let $
\bB^{(1)}, \bB^{(2)},\ldots, $ \linebreak
$\widetilde \bB^{(1)},\widetilde \bB^{(2)},\ldots$ be  i.i.d.  random vectors  taking values in $\{e_1,\ldots,e_d\}$ as defined in \Cref{ex:mrvsum:delta1}.
Also assume that
 $X^{(1)},X^{(2)},\ldots,\widetilde X^{(1)},\widetilde X^{(2)},\ldots,\bB^{(1)},\bB^{(2)},\ldots,\widetilde \bB^{(1)},\widetilde \bB^{(2)},\ldots$ are mutually independent. Then $\bY^{(k)}:= 2^{-1/\alpha}X^{(k)}\bB^{(k)}+2^{-1/{\alpha}}\widetilde X^{(k)}\widetilde \bB^{(k)}, k\in\N$, are  i.i.d.  adapted-MRV random vectors with $\Delta=2$. Specifically for any $k\ge 1$:
 \begin{enumerate}
\item[i)] $\bY^{(k)}\in \MRV(\alpha, b_1, \mu_1, \E_d^{(1)})$ where $b_1(t)=\overline{F}^{\leftarrow}(1/t)$ and with $A_1=\{\bz\in \E_d^{(i)}: z_j>x_j\}\in\mathcal{R}^{(1)}$ for some $j\in \mathbb{I}$, $\mu_1(A_1)= p_jx_j^{-\alpha}$.
\item[ii)] $\bY^{(k)}\in \MRV(2\alpha, b_1^{1/2}, \mu_2, \E_d^{(2)})$ where with $A_2=\{\bz\in \E_d^{(2)}: z_j>x_j,\,z_{\ell}>x_{\ell} \}\in \mathcal{R}^{(2)}$ for some $j,\ell\in \mathbb{I}$, $j\not=\ell$ we have  $\mu_2(A_2)= \frac 12 p_jp_{\ell}(x_jx_{\ell})^{-\alpha}$. 
\item[iii)] For $i=3,\ldots, d$ and some $0<\gamma<\alpha/d$, we have $\bY^{(k)}\in \mathcal{NC}(t^{1/i(\alpha+\gamma)}, \E_d^{(i)})$.
\end{enumerate}
Applying \Cref{mainTheorem}, and following the proof of \Cref{prop:Delta1}, we can show that for $n\ge i$,
\[\sum_{k=1}^n \bY^{(k)} \in \MRV(i\alpha, b_1^{1/i}, \mu_{i,n}^{*\oplus}, \E_d^{(i)} ),\]
where
\begin{align}
    \mu_{i,n}^{*\oplus}\Big(\{\bz\in \E_d^{(i)}:z_j>x_j\; \forall\, j \in S\}\Big) & = f_i(n)\prod_{j\in S}
    \mu_1\left(\{\bz\in \E_d^{(i)}:z_j>x_j\}\right) \nonumber \\ & = f_i(n)\prod_{j\in S} {p_j} x_j^{-\alpha}, \label{eq:limit:muiplusdelta2}
    \end{align}
    for $S\subseteq \mathbb{I}$ with $|S|=i$, $x_j>0$ for $j\in S$ and some function $f_i:\mathbb{N}\to \R_+$ where $$ \frac{n!}{(n-i)!} \le f_i(n) \le n^i, \quad n\ge i, \; i\in \mathbb{I}.$$
Furthermore, $\mu_{i,n}^{*\oplus}(\E_d^{(i+1)})=0$. In particular, we can check that
 \begin{align*}
    f_1(n) &= n, \; n\ge 1, \quad& f_2(n)&=  n(n- 1/2),\; {n\ge 1},\\
    f_3(n) &= n(n-1/2)(n-1), \; {n\ge 2},\quad & f_4(n)&=  n(n-1/2)(n-1)(n-3/2),\; {n\ge 2}.
\end{align*}
A pattern in the  value of $f$ emerges for this example, but it depends on the limit measures of the underlying variables $\bY^{(k)}$. Examples in the same spirit can be computed for $\Delta\ge 3$ involving some careful  combinatorial accounting.
\end{example}

\begin{remark}
It is easy to extend \Cref{ex:mrvsum:delta2} in the spirit of \Cref{ex:mrvsum:delta1}. Suppose $\bY^{(k)},\,  k\in\N$, are the same random vectors as in \Cref{ex:mrvsum:delta2} and $(\bepsilon^{(k)})_{k\in\N}$ are i.i.d. random vectors with $\E\|\bepsilon^{(k)}\|^{d(\alpha+\theta)}<\infty$ for some $\theta>0$, which are also independent of the sequence $(\bY^{(k)})_{k\in\N}$. Then $\bZ^{(k)}:=\bY^{(k)}+\bepsilon^{(k)}$, $k\in\N$, is an adapted-MRV sequence of random vectors with $\Delta=2$.  All the conclusions for $(\bY^{(k)})_{k\in\N}$, and $\sum_{k=1}^n\bY^{(k)}, n\ge i$, in \Cref{ex:mrvsum:delta2} also hold for
$(\bZ^{(k)})_{k\in\N}$, and $\sum_{k=1}^n\bZ^{(k)}, n\ge i$, by an
application of \Cref{Lemma_aux}.
\end{remark}

\section{Random sums of regularly varying random vectors}\label{sec:randomsum}
A natural extension from aggregating finitely many random vectors is to aggregate randomly many random vectors, which we discuss in this section, finally leading towards an extension to L\'evy processes in \Cref{section:Levy}. We observed in \Cref{sec:mainresult} that the behavior of the finite sum may take various forms even when they are multivariate regularly varying. Hence, for convenience, for the rest of the paper, we assume that the following is satisfied.

\begin{assumptionletter} \label{AssumptionA}
Let $\bZ, (\bZ^{(k)})_{k\in\N}$ be a sequence of  i.i.d.  random vectors in $\R_+^d$. Assume that for all $i=1,\ldots,d$ there exists a measurable function $f_i:\N\to\R_+$ and {a non-null measure $\mu_i\in \mathbb{M}(\E_d^{(i)})$ 
  such that for any $n\in\N$, 
\beao
   \left\{\sum_{k=1}^n\bZ^{(k)}\in\MRV^*(\alpha_{i,n},b_{i,n},\mu_{i,n}^{\oplus}=f_i(n)\mu_i,\E_d^{(i)}); i=1,\ldots,d;\Delta_n\right\}
\eeao
and $\Delta_n=d$ for $n\ge d$. Furthermore, assume that for all $i=1, \ldots, d$, there exist a finite constant $\alpha_i>0$ and a regularly varying function $b_i(t)\in\RV_{1/\alpha_i}$ such that if $f_i(n)\not=0$ we have $\alpha_{i,n}=\alpha_i$ and $b_{i,n}=b_i$.} 
\end{assumptionletter}

\begin{remark} \label{Remark 1} $\mbox{}$
\begin{enumerate}
  \item[(a)]  In general, the structure of the function $f_i$ can be quite complex and often requires an involved combinatorial  accounting procedure, see  \Cref{ex:mrvsum:delta2}; nevertheless  in
    several examples we do observe that $f_i(n)=n$ and in all our examples $0\leq f_i(n)\leq n^i$. Assumption~\ref{AssumptionA} allows us the flexibility to not get involved in the computation of $f_i$.
   \item[(b)]  For $\Delta_n<d$ we have $f_i(n)=0$ and $\alpha_{i,n}=\infty$ for $i=\Delta_n+1,\ldots,d$,  and hence, we have null convergence. 
 On the other hand, for $i=1,\ldots,\Delta_n$ we have $0<f_i(n)<\infty$, $\alpha_{i,n}=\alpha_i$ and $b_{i,n}=b_i$.  For the examples considered in \Cref{subsec:nohrvinallcone} this happens to be the case.

    \item[(c)] Suppose $\Delta_1=d$, then Assumption~\ref{AssumptionA} implies that  $I(i)$ as defined
        in \Cref{mainTheorem} can be chosen to be $i$  and hence, $\alpha_m+\alpha_{i-m}\geq \alpha_i$ for every $m=0,\ldots,i$ and $i=1,\ldots,d$.
On the other hand,  $\alpha_m+\alpha_{i-m}> \alpha_i$ for every $m=1,\ldots,i-1$  is a sufficient condition for  $I(i)=i$.
\end{enumerate}
\end{remark}

 
\newpage
    
    \begin{remark} \label{Remark 2} 
    Under Assumption \ref{AssumptionA}, define $\bZ^{(\oplus,k)}:=\sum_{l=(k-1)d+1}^{kd}\bZ^{(l)}$, $k\in\N$. Also let $\bZ^{\oplus}:=(Z_{1}^{\oplus},\ldots, Z_{d}^{\oplus}) \sim \bZ^{(\oplus,1)}$ and $Z_{(1)}^{\oplus}\geq\ldots Z_{(d)}^{\oplus}$ be the order statistics of $Z_{1}^{\oplus},\ldots, Z_{d}^{\oplus}$. 
\begin{enumerate}
    \item[(a)]  From Assumption~\ref{AssumptionA} we have 
    \beao
        \sum_{k=1}^n\bZ^{(\oplus,k)}\in \MRV(\alpha_{i},b_{i},f_i(dn)\mu_i,\E_d^{(i)})
    \eeao
    with $0<f_i(dn)<\infty$ for $i=1,\ldots, d$ and $n\in\N$. Now, a consequence of \Cref{Remark 1} (c) is  that $I(i)$ (as defined
        in \Cref{mainTheorem}) for the random vector $\bZ^{(\oplus,1)}$ (or equivalently $\bZ^{\oplus}$) is equal to $i$ and hence, $\alpha_m+\alpha_{i-m}\geq \alpha_i$ for every $m=0,\ldots,i$ and $i=1,\ldots,d$. 
    Now, $I(i)=i$ implies as well that there exists a finite constant $C^*>0$ such that
    \beam \label{E6}
            0\leq \sup_{t>0}\sum_{i=1}^d\sum_{m=0}^{i}\frac{\P\left(Z^{\oplus}_{(m)}>t\right)
                \P\left(Z^{\oplus}_{(i-m)}>t\right)}{\P\left(Z^{\oplus}_{(i)}>t\right)}
                \leq C^{*}.
        \eeam
    \item[(b)] 
    Note that the function
    $g_i(t):=\P\left(Z_{(i)}^{\oplus}>t\right)\in \mathcal{RV}_{-\alpha_{i}}$ for any $i\in\mathbb{I}$. 
    Hence, using Potter's bound \citep[Proposition B.1.19 (5)]{dehaan:ferreira:2006} there exists a finite constant $C^{**}>0$ such that 
        \beam \label{E7}
             \sup_{ i\in \mathbb{I} }\sup_{t>0}
            \frac{\P\left(Z^{\oplus}_{(i)}>t/2\right)}{\P\left(Z^{\oplus}_{(i)}>t\right)}\leq C^{**}.
        \eeam  
\end{enumerate}
\end{remark}

\begin{theorem} \label{thm:randomsum:main}
Let Assumption~\ref{AssumptionA} hold and {let the i.i.d. sequence}
 $(\bZ^{(k)})_{k\in\N}$ be independent of the $\N_0$-valued random variable $\tau$ with $\E(\kappa^\tau)<\infty$ for any $\kappa>0$.
Then for $i=1,\ldots,d$,
\beao
   \sum_{k=1}^{\tau}\bZ^{(k)}\in \MRV(\alpha_i,b_i,\E(f_i(\tau))\mu_i,\E_d^{(i)}).
\eeao
\end{theorem}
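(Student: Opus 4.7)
The plan is to condition on $\tau$ and reduce the assertion to the finite-convolution statement in Assumption \ref{AssumptionA} via dominated convergence. By \Cref{prop:rectsetsforM}, it suffices to verify that for every $\mu_i$-continuity set $A \in \cR^{(i)}$,
\begin{align*}
\lim_{t \to \infty} t \cdot \P\Bigl(\sum_{k=1}^{\tau} \bZ^{(k)} \in b_i(t)\,A\Bigr) = \E(f_i(\tau))\,\mu_i(A).
\end{align*}
Independence of $\tau$ and $(\bZ^{(k)})_{k\in\N}$ gives the representation
\begin{align*}
t \cdot \P\Bigl(\sum_{k=1}^{\tau} \bZ^{(k)} \in b_i(t)\,A\Bigr) = \sum_{n=0}^{\infty} \P(\tau=n)\; t \cdot \P\Bigl(\sum_{k=1}^{n} \bZ^{(k)} \in b_i(t)\,A\Bigr),
\end{align*}
and for each fixed $n$ Assumption \ref{AssumptionA} pins down the pointwise limit of the summand: if $f_i(n)>0$ (automatically so for $n\geq d$) the limit equals $f_i(n)\mu_i(A)$, while if $f_i(n)=0$ the null-convergence clause, combined with the fact that $b_i$ dominates the null rate $b_{i,n}(t)=t^{1/(i(\alpha_1+\gamma))}$, forces the limit to be zero.

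To legally interchange this pointwise limit with the series over $n$, I will prove the uniform-in-$t$ polynomial estimate
\begin{align*}
\sup_{t \ge t_0(A)}\, t \cdot \P\Bigl(\sum_{k=1}^{n} \bZ^{(k)} \in b_i(t)\,A\Bigr) \leq C_A \cdot n^{i}, \qquad n \in \N.
\end{align*}
Because $\E(\kappa^{\tau})<\infty$ for every $\kappa>0$, all moments of $\tau$ are finite, so $\sum_{n}\P(\tau=n)\,C_A\,n^{i}<\infty$ supplies the required dominating function. Once this holds, dominated convergence gives
\begin{align*}
\lim_{t\to\infty} t \cdot \P\Bigl(\sum_{k=1}^{\tau}\bZ^{(k)}\in b_i(t)A\Bigr) = \sum_{n=0}^{\infty}\P(\tau=n)\,f_i(n)\mu_i(A) = \E(f_i(\tau))\mu_i(A),
\end{align*}
and \Cref{prop:rectsetsforM} concludes the proof.

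The main obstacle is the uniform bound. My proposed route is to exploit the block decomposition of \Cref{Remark 2}: write $n=md+r$ with $0\leq r<d$, set $\bZ^{(\oplus,j)}=\sum_{\ell=(j-1)d+1}^{jd}\bZ^{(\ell)}$, and decompose $\sum_{k=1}^{n}\bZ^{(k)}=W_m+R_r$ with $W_m=\sum_{j=1}^{m}\bZ^{(\oplus,j)}$ and a residual $R_r$ of fewer than $d$ terms. By \Cref{Remark 2}(a), the block vectors satisfy Assumption \ref{AssumptionA} with $I(i)=i$ and, crucially, the universal ratio bounds \eqref{E6}--\eqref{E7} hold. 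I intend to induct on $m$: writing $W_{m+1}=W_m+\bZ^{(\oplus,m+1)}$, decompose the rectangular event $\{W_{m+1}\in b_i(t)A\}$ according to which subset of the coordinates of $A$ is exceeded by $W_m$ versus by $\bZ^{(\oplus,m+1)}$, exactly as in the derivation of $\mu_{m,i}^{*}$ in \Cref{mainTheorem}, and combine the inductive hypothesis with \eqref{E6}--\eqref{E7} and Potter-type bounds to pass $O(m^{i})$ to $O((m+1)^{i})$ with a constant independent of $t$. The residual $R_r$ has at most $d-1$ summands and is handled by a crude union bound over the $d-1$ possible sizes together with \Cref{Lemma_aux}, after which the bound for $W_m$ transfers to $\sum_{k=1}^{n}\bZ^{(k)}$. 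The inductive step is the delicate point because one needs the constants and the normalisation $b_i(t)$ to remain insensitive to $m$; this is precisely where the assumption $I(i)=i$ (and hence $\alpha_m+\alpha_{i-m}\geq \alpha_i$) is essential, since it guarantees that the convolution of two block-tail contributions never produces a heavier tail on $\E_d^{(i)}$ than $b_i$ already encodes.
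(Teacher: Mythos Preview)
The overall strategy---condition on $\tau$, identify each summand's pointwise limit via Assumption~\ref{AssumptionA}, and interchange limit and series by a domination argument---matches the paper's proof. The gap is in the domination itself: the polynomial envelope $C_A\,n^{i}$ that you assert is not produced by the induction you sketch, and in fact such a bound is not available.

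Carry out the inductive step as you describe: decompose $\{W_{m+1}\in b_i(t)A\}$ according to which subset of the coordinates is exceeded by $W_m$ versus by the fresh block $\bZ^{(\oplus,m+1)}$, and bound each piece using \eqref{E6}--\eqref{E7}. What you obtain is a \emph{multiplicative} recursion,
\[
\sup_{t}\frac{\P(W_{m+1}\in tA)}{\P(Z^{\oplus}_{(i)}>t)}\;\le\;K_i\cdot \sup_{t}\frac{\P(W_{m}\in tA)}{\P(Z^{\oplus}_{(i)}>t)},
\]
with $K_i$ depending on $C^*,C^{**}$ but not on $m$; this gives $K_i^{m}$, not $m^{i}$. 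This is precisely \Cref{aux_1} in the paper, and it is the multivariate analogue of Kesten's inequality for subexponential sums: already for $d=i=1$ one only has $\sup_t \P(S_n>t)/\P(Z>t)\le K(1+\varepsilon)^n$, and no uniform-in-$t$ polynomial bound is true in general (for fixed $t$ and $n\to\infty$ the ratio blows up faster than any polynomial). The moment assumption $\E(\kappa^{\tau})<\infty$ for all $\kappa>0$ is in the statement exactly because the dominating sequence is exponential; if your polynomial bound held, $\E(\tau^{d})<\infty$ would already suffice.

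The fix is simply to replace $C_A\,n^{i}$ by $C\,K_i^{n}$ as in \Cref{aux_1}; then $\sum_n \P(\tau=n)K_i^{n}=\E(K_i^{\tau})<\infty$, and dominated convergence (the paper invokes Pratt's theorem, normalising by $\P(Z^{\oplus}_{(i)}>t)$ rather than $1/t$) finishes the argument exactly as you outlined.
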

The proof is in \Cref{sec:proofmainrandomsum}.
Note that the examples considered in both \Cref{subsec:hrvinallcones} and \Cref{subsec:nohrvinallcone}, all satisfy Assumption~\ref{AssumptionA} (as well as \eqref{E6}).  Hence, for any $\N_0$-valued random variable $\tau$ whose  moment generating function exists on the positive real line, we can compute the tail probability of a random sum of $\tau$ many i.i.d. MRV random vectors using \Cref{thm:randomsum:main}.

\section{Regular variation in multivariate L\'evy processes}\label{section:Levy}
A particular example of a random sum of i.i.d. random vectors as indicated in \Cref{thm:randomsum:main} is the compound Poisson process {at a fixed time point} where the number of summands $\tau$
is Poisson distributed, which in turn is an example of a L\'evy process.  In this section, we investigate multivariate regular variation
of  L\'evy processes $\bL=(\bL(s))_{s\geq 0}$ on different subcones $\E_d^{(i)}$, $i=1,\ldots,d$, and relate it to multivariate regular variation of the L\'evy measure $\Pi$  on those subcones.
 A  L{\'e}vy process   is
characterized by its {\em L{\'e}vy-Khinchine representation}
$\E(\e^{i\langle\Theta,\bL(s)\rangle})=\exp(-s\,\Psi(\Theta))$
for $\Theta\in\mathbb{R}^d$, where
 \beao
        \Psi(\Theta)=
        -i\langle \gamma,\Theta\rangle
        +\frac{1}{2}\langle\Theta,\Sigma\,\Theta \rangle+\int_{\R^d}
 \left(1-\e^{i\langle\Theta,\bx\rangle}+i\langle \bx,\Theta\rangle\right)\,\Pi(d\bx)
    \eeao
with  $\gamma\in\mathbb{R}^d$, $\Sigma$ a non-negative definite
matrix in $\mathbb{R}^{d\times d}$ and a Borel measure
 $\Pi$   on $\mathbb{R}^d$, called the L\'evy measure
 which satisfies
$\int_{\R^d}\min\{\|\bx\|^2,1\}\,\Pi(d\,\bx)<\infty$
and  $\Pi(\bzero)=0$. Moreover, $\langle
\cdot,\cdot\rangle$ denotes the inner product in $\R^d$.
The L\'evy measure $\Pi(A)$ measures the expected number of jumps of the L\'evy process in the interval $[0,1]$ which lies in the set $A$. We denote by $\Pi_j$ for $j=1,\ldots,d$ the marginal L\'evy measures. 
In this paper, we restrict to L\'evy processes in $\R_+^d$, i.e., the marginal L\'evy processes are \emph{subordinators}, which are increasing L\'evy processes. For more details on L\'evy processes see \cite{sato:1991,applebaum:2004}.

Regular variation in multivariate L\'evy processes, especially characterizing complex tail events, including but not restricted to \eqref{eq:setA},  can happen in a variety of ways. We may observe regular variation for the L\'evy process itself, or the L\'evy measure, and they may have different implications depending on the dependence structure of the L\'evy process. In the following three subsections we investigate this in detail; the proofs of the associated results are provided in  \Cref{sec:proofLevy}. 


\subsection{The L\'evy measure admits regular variation on all subcones}

In the first subsection, we  assume that the L\'evy measure is multivariate regularly  varying on all subcones $\E_d^{(i)}$, $i=1,\ldots,d$ and show that the same is true for the L\'evy process, in fact, they are \emph{tail equivalent} (in a multivariate sense) as we exhibit next. We understand (multivariate) tail equivalence as an extension of \eqref{eq:measuretailequiv} to appropriate sets $A\in \mathcal{B}^{(i)}$. The result can be seen as an extension of \eqref{eq1}  to subcones
(cf. \cite{hult:lindskog:2006b}).

\begin{proposition}[Extending \Cref{prop:mrv:tailsremainsame}] \label{Proposition 6.1}
Let $(\bL(s))_{s\geq 0}$ be a L\'evy process in $\R_+^d$ with L\'evy measure $\Pi\in\MRV(\alpha_i,b_i,\mu_i,\E_d^{(i)})$  for $i=1,\ldots,d$ whose univariate marginal L\'evy measures are tail equivalent.
Moreover assume that $\alpha_i<\alpha_m+\alpha_{i-m}$ for all $m= 1,\ldots, i-1$ and $i = 2,\ldots,d$. For $s>0$ we have then
\beao  \bL(s)\in\MRV(\alpha_i,b_i,s\mu_i,\E_d^{(i)}) \quad  \text {for } \quad i=1,\ldots,d.
\eeao
\end{proposition}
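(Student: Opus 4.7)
The plan is to exploit the L\'evy-It\^o decomposition to split $\bL(s)$ into a compound-Poisson part of large jumps and a remainder with all moments finite, and then invoke \Cref{thm:randomsum:main} on the compound-Poisson part. Fix $\delta>0$ once and for all, and write
\begin{equation*}
\bL(s) \;=\; \bS_\delta(s)+\bR_\delta(s),
\end{equation*}
where $\bS_\delta(s):=\sum_{u\le s,\,\|\Delta \bL(u)\|_\infty>\delta}\Delta \bL(u)$ is a compound Poisson process whose L\'evy measure is $\Pi$ restricted to $B_\delta:=\{\bx\in\R_+^d:\|\bx\|_\infty>\delta\}$, and $\bR_\delta(s)$ collects the (non-negative) drift and the small jumps of size at most $\delta$. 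The two summands are independent by the standard Poisson-point-process construction, and since the jumps of $\bR_\delta$ are bounded by $\delta$ while the drift is deterministic, $\bR_\delta(s)$ admits exponential moments; in particular $\E\|\bR_\delta(s)\|^{\alpha_d+\gamma}<\infty$ for every $\gamma>0$.

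Next I would write $\bS_\delta(s)=\sum_{k=1}^{N(s)}\bZ^{(k)}$ with $N(s)\sim\mathrm{Poisson}(s\lambda_\delta)$, $\lambda_\delta:=\Pi(B_\delta)$, and i.i.d.\ jumps $\bZ^{(k)}$ having law $\Pi(\,\cdot\,\cap B_\delta)/\lambda_\delta$. For any Borel set $A\subset\E_d^{(i)}$ bounded away from $\bCA_d^{(i-1)}$, eventually $b_i(t)A\subset B_\delta$, so
\begin{equation*}
t\,\P\bigl(\bZ^{(1)}/b_i(t)\in A\bigr) \;=\; \frac{t\,\Pi(b_i(t)A)}{\lambda_\delta}\;\longrightarrow\; \frac{\mu_i(A)}{\lambda_\delta},
\end{equation*}
which yields $\bZ^{(1)}\in\MRV(\alpha_i,b_i,\mu_i/\lambda_\delta,\E_d^{(i)})$ for every $i=1,\ldots,d$. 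The assumed tail equivalence of the univariate $\Pi_j$'s transfers to the marginals of $\bZ^{(1)}$, and the inequality $\alpha_i<\alpha_m+\alpha_{i-m}$ is untouched by the normalization. Hence \Cref{prop:mrv:tailsremainsame} applies and gives, for every $n\ge 1$,
\begin{equation*}
\sum_{k=1}^n\bZ^{(k)}\in\MRV\bigl(\alpha_i,b_i,\, n\,\mu_i/\lambda_\delta,\,\E_d^{(i)}\bigr),\qquad i=1,\ldots,d,
\end{equation*}
so Assumption~\ref{AssumptionA} is satisfied with $f_i(n)=n$ and $\Delta_n=d$.

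Since $N(s)$ is Poisson and therefore has all exponential moments, \Cref{thm:randomsum:main} gives, using $\E[N(s)]=s\lambda_\delta$,
\begin{equation*}
\bS_\delta(s)\in\MRV\bigl(\alpha_i,b_i,\,\E[N(s)]\cdot\mu_i/\lambda_\delta,\,\E_d^{(i)}\bigr) \;=\; \MRV(\alpha_i,b_i,s\mu_i,\E_d^{(i)}),\qquad i=1,\ldots,d.
\end{equation*}
Combining with the independent remainder $\bR_\delta(s)$, whose $(\alpha_i+\gamma)$-th moment is finite for every $i$, the ``heavier tail wins'' \Cref{Lemma_aux} delivers the claim:
\begin{equation*}
\bL(s)=\bS_\delta(s)+\bR_\delta(s)\in\MRV(\alpha_i,b_i,s\mu_i,\E_d^{(i)}),\qquad i=1,\ldots,d.
\end{equation*}

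The main point requiring care is the verification of Assumption~\ref{AssumptionA} for the truncated-jump law: one must confirm that restricting $\Pi$ to $B_\delta$ preserves both MRV on every subcone and the tail equivalence of the marginals, and that the index inequality required by \Cref{prop:mrv:tailsremainsame} transfers. Once these book-keeping facts are in hand, the applications of \Cref{thm:randomsum:main} and \Cref{Lemma_aux} are mechanical and, reassuringly, the final limit measure $s\mu_i$ does not depend on the truncation level $\delta$.
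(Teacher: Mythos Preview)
Your proof is correct and follows essentially the same route as the paper: L\'evy--It\^o decomposition into a compound Poisson piece with large jumps and a remainder with all moments finite, verification that the truncated jump law inherits MRV on every $\E_d^{(i)}$ (with limit measure $\mu_i/\lambda_\delta$), application of \Cref{prop:mrv:tailsremainsame} to obtain Assumption~\ref{AssumptionA} with $f_i(n)=n$, then \Cref{thm:randomsum:main} for the random sum, and finally \Cref{Lemma_aux} to absorb the light-tailed remainder. The paper presents the compound Poisson case as a separate preliminary step before decomposing the general L\'evy process, but the ingredients and their order of use are identical to yours.
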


A direct consequence of \Cref{Proposition 6.1} is the tail equivalence of the L\'evy measure of the set $tA$ and the probability measure of the L\'evy process belonging to $tA$, for  Borel sets $A\in\mathcal{B}^{(i)}$ bounded away from $\bCA_d^{(i-1)}$ with $\mu_i(A)>0$ and $\mu_i(\partial A)=0$ such that
\beao
    \P(\bL(s)\in tA)\sim s\,\P(\bL(1)\in tA)\sim s\,\Pi(tA)\sim  \frac{s}{b_i^{\leftarrow}(t)}\, \mu_i(A)
    \quad \text{ as } t\to\infty.
\eeao
 Although the tail equivalence of the L\'evy process and the L\'evy measure holds for a variety of sets, the tail rate  differs depending on which subcone $\E_d^{(i)}$ the set $A$ belongs to. A similar conclusion was shown in  \cite{hult:lindskog:2006b}, but only for sets $A$ with $\mu_1(A)>0$. However, in many situations this is not the case as we see in the following examples.

\begin{example} \label{Example:5.2}
Let $\bL$ be a  compound Poisson process of the form $\bL(s)=\sum_{k=1}^{N(s)}\bZ^{(k)}$ where the jump sizes $(\bZ^{(k)})_{k\in\N}$ are i.i.d. and independent of the Poisson process $(N(s))_{s\geq 0}$ with intensity $\lambda>0$. 
Suppose the jump size $\bZ^{(1)}$ has identical marginals which have $\RV_{-\alpha}$ tail distributions with tail index $-\alpha<0$.
\begin{enumerate}
    \item[(a)]  Let the dependence structure of $\bZ^{(1)}$
    be modelled by a Marshall-Olkin copula with equal parameters as in \Cref{example:Mashall-Olkin}(a), then
    $\bL(s)\in\MRV(\alpha_i,b_i,s\lambda \mu_i,\E_d^{(i)})$ with parameters  $\alpha_i, b_i, \mu_i$   given in \eqref{eq:MO:alphakbkmukeq}.
    \item[(b)] Let the dependence structure of $\bZ^{(1)}$
    be modelled by a Marshall-Olkin copula with proportional parameters
    as in \Cref{example:Mashall-Olkin}(b), then $\bL(s)\in \MRV(\alpha_i^*,b_i^*,s\lambda\mu_i^*,\E_d^{(i)})$
    with parameters $\alpha_i^*, b_i^*, \mu_i^*$  given in \eqref{eq:MO:alphakbkmukpropsize}.
    \item[(c)]  Let the dependence structure of $\bZ^{(1)}$
    be modelled by an ACIG copula as in \Cref{example:Archimedean},  then  $\bL(s)\in \MRV(\alpha,b_{\alpha},s\lambda\mu_1,\E_d^{(1)})$ and $\bL(s)\in\MRV(\alpha\beta,b_{\alpha}^{1/\beta},s\lambda\mu_2,\E_d^{(i)})$ for $i=2,\ldots, d$ with parameters  given in \Cref{ex:cop:arch}.
\end{enumerate}
\end{example}

\begin{example} \label{Example:5.3}
Suppose $L_{\alpha}^{(j)}$, $j=1,2,3$ are i.i.d. L\'evy processes in $\R_+$ with L\'evy measure $\Pi_\alpha\in \MRV(\alpha,b_{\alpha},\mu_\alpha,(0,\infty))$, 
$L_{\beta}^{(j)}$, $j=1,2$ are i.i.d. L\'evy processes in $\R_+$ with L\'evy measure $\Pi_\beta\in\MRV(\beta,b_{\beta},\mu_\beta,(0,\infty))$ and $L_{\gamma}$ is a L\'evy process in $\R_+$ with L\'evy measure $\Pi_\gamma\in\MRV(\gamma,b_{\gamma},\mu_\gamma,(0,\infty))$. A typical example for $L_\alpha$
is an $\alpha$-stable L\'evy process with L\'evy measure $\Pi_{\alpha}(\mathrm dx)=\alpha x^{-\alpha-1}\bone_{(0,\infty)}(x)\, \mathrm dx$. Furthermore, assume all processes are independent and $\alpha<\beta<\gamma<2\alpha$. Then the 3-dimensional L\'evy process
\beao
    \bL(s)=(L_{\alpha}^{(1)}(s)+L_{\beta}^{(1)}(s)+L_{\gamma}(s),L_{\alpha}^{(2)}(s)+L_{\beta}^{(1)}(s)+L_{\gamma}(s),L_{\alpha}^{(3)}(s)+L_{\beta}^{(2)}(s)+L_{\gamma}(s)),
\eeao
has L\'evy measure
\beao
    \Pi(A)=\sum_{j=1}^3\Pi_\alpha(A_j)
        +\Pi_\beta(A_1\cap A_2)+\Pi_\beta(A_3)
        +\Pi_\gamma(A_1\cap A_2\cap A_3),
\eeao
where $A_1=\{z\in\R_+:(z,0,0)\in A\}$,
$A_2=\{z\in\R_+:(0,z,0)\in A\}$ and
$A_3=\{z\in\R_+:(0,0,z)\in A\}$. Of course, $\Pi$ satisfies the assumptions of \Cref{Proposition 6.1} with $(\alpha_1,\alpha_2,\alpha_3)=(\alpha,\beta,\gamma)$, $(b_1,b_2,b_3)=(b_{\alpha},b_{\beta},b_{\gamma})$ and 
\beao
    \mu_1(A)=\sum_{j=1}^3\mu_\alpha(A_j),\quad \mu_2(A)=\mu_\beta(A_1\cap A_2)
      \quad  \text{ and } \quad
     \mu_3(A)=\mu_\gamma(A_1\cap A_2\cap A_3).
\eeao
Finally, $\bL(s)\in\MRV(\alpha_i,b_i,s\mu_i,\E_d^{(i)})$ for $i=1,2,3.$
\end{example}

In each case for Examples \ref{Example:5.2} and \ref{Example:5.3},  $\bL(s)$ is MRV on $\E_d^{(1)}$ and $\E_d^{(2)}$ but with different indices and hence, $\mu_1(\E_d^{(2)})=0$. This implies that the components of $\bL(s)$ are asymptotically tail independent. 
In the special case where the components of  $\bL(s)$ are (strongly) dependent, the next result follows directly from \Cref{Proposition 6.1}.

\begin{corollary}[Extending \Cref{cor:asymdep}: Dependent case] \label{corollary2}
Let $(\bL(s))_{s\geq 0}$ be a L\'evy process in $\R_+^d$ with  L\'evy measure $\Pi\in\MRV(\alpha_i,b_i,\mu_i,\E_d^{(i)})$  for $i=1,\ldots,d$
whose univariate marginal L\'evy measures are tail equivalent. Moreover,  $(\alpha_i,b_i,\mu_i,\E_d^{(i)}) = (\alpha,b,\mu,\E_d^{(i)})$ for $i=1,\ldots, i^*$ for some $ i^*\le d$.
 Then for $s>0$ we have
\beao
                \bL(s)\in\MRV(\alpha,b,s\mu,\E_d^{(i)}) \quad  \text {for } \quad i=1,\ldots,  i^*.
\eeao
\end{corollary}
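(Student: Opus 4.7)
The plan is to verify that the proof of \Cref{Proposition 6.1} applies on a cone-by-cone basis and that its key inequality is satisfied on the cones of interest. Concretely, to obtain $\bL(s)\in\MRV(\alpha,b,s\mu,\E_d^{(i)})$ for a fixed $i\le i^*$, the argument of \Cref{Proposition 6.1} only requires $\Pi\in\MRV(\alpha,b,\mu,\E_d^{(i)})$ together with the strict inequality $\alpha_i<\alpha_m+\alpha_{i-m}$ for every $m\in\{1,\ldots,i-1\}$. Since $i\le i^*$ forces $m,\,i-m\in\{1,\ldots,i^*-1\}$, the corollary's hypothesis gives $\alpha_m=\alpha_{i-m}=\alpha_i=\alpha$, so $\alpha_i=\alpha<2\alpha=\alpha_m+\alpha_{i-m}$ holds trivially. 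Thus the cones $\E_d^{(j)}$ with $j>i^*$, where the inequality may fail, play no role in the desired conclusion.

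To make the cone-by-cone reduction explicit, I would employ the L\'evy-It\^o decomposition: for fixed $\epsilon>0$ write $\bL(s)=\bL_{\epsilon}(s)+\widetilde\bL_{\epsilon}(s)$, where $\bL_{\epsilon}(s)=\sum_{k=1}^{N(s)}\bJ^{(k)}$ is the compound Poisson process collecting jumps of norm exceeding $\epsilon$, with $N(s)\sim\mathrm{Poisson}(s\Pi(\{\|\bz\|>\epsilon\}))$ and i.i.d. jumps $\bJ^{(k)}$ of law proportional to $\Pi|_{\{\|\bz\|>\epsilon\}}$, while $\widetilde\bL_{\epsilon}(s)$ collects the drift and all bounded small jumps, independently of $\bL_{\epsilon}(s)$. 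Since multivariate regular variation is a tail property, and since for any $A$ bounded away from $\bCA_d^{(i-1)}$ the scaled set $tA$ lies inside $\{\|\bz\|>\epsilon\}$ eventually, one obtains $\bJ^{(1)}\in\MRV(\alpha,b,\mu/\Pi(\{\|\bz\|>\epsilon\}),\E_d^{(i)})$ for each $i\le i^*$; the marginal tail-equivalence hypothesis on $\Pi$ is inherited by $\bJ^{(1)}$.

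Applying \Cref{cor:asymdep} yields $\sum_{k=1}^n\bJ^{(k)}\in\MRV(\alpha,b,n\mu/\Pi(\{\|\bz\|>\epsilon\}),\E_d^{(i)})$ for every $n\in\N$ and $i\le i^*$, so \Cref{AssumptionA} holds with $f_i(n)=n$. Invoking \Cref{thm:randomsum:main} with $\tau=N(s)$ (whose moment generating function is finite everywhere) and using $\E[N(s)]=s\Pi(\{\|\bz\|>\epsilon\})$ gives $\bL_{\epsilon}(s)\in\MRV(\alpha,b,s\mu,\E_d^{(i)})$ for $i\le i^*$; note that the factor $\Pi(\{\|\bz\|>\epsilon\})$ cancels cleanly, so the limit measure is independent of the chosen $\epsilon$. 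Finally, since $\widetilde\bL_{\epsilon}(s)$ has jumps bounded by $\epsilon$, it admits moments of all orders, and \Cref{Lemma_aux} transfers the MRV property to $\bL(s)=\bL_{\epsilon}(s)+\widetilde\bL_{\epsilon}(s)$.

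The main obstacles are bookkeeping ones rather than conceptual: first, ensuring the independence of $\bL_{\epsilon}(s)$ and $\widetilde\bL_{\epsilon}(s)$ needed for \Cref{Lemma_aux}, which follows from the L\'evy-It\^o decomposition for subordinator components; second, verifying that the MRV structure of $\Pi$ restricted to $\{\|\bz\|>\epsilon\}$ descends to each subcone $\E_d^{(i)}$ with the same scaling function $b$ and the correctly normalized limit measure. Both are routine, but together they justify why the classical ``one large jump'' paradigm of \cite{hult:lindskog:2006b} continues to hold on the lower subcones even when $\Pi$ exhibits hidden regular variation with distinct indices on $\E_d^{(j)}$ for $j>i^*$.
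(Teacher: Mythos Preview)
Your proof is correct and mirrors the paper's approach exactly: the paper records the corollary as a direct consequence of \Cref{Proposition 6.1}, and you have unpacked this by running the L\'evy--It\^o reduction to a compound Poisson piece, applying \Cref{cor:asymdep} (in place of \Cref{prop:mrv:tailsremainsame}) to the jumps, and then invoking \Cref{thm:randomsum:main} and \Cref{Lemma_aux}. Your observation that for $i\le i^*$ the key inequality $\alpha_i<\alpha_m+\alpha_{i-m}$ collapses to $\alpha<2\alpha$ is precisely what makes the corollary immediate, and it correctly isolates why the cones $\E_d^{(j)}$ with $j>i^*$ are irrelevant.

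One minor technical point: \Cref{AssumptionA} as formulated demands the product structure $\mu_{i,n}^{\oplus}=f_i(n)\mu_i$ on \emph{all} cones $i=1,\ldots,d$, whereas \Cref{cor:asymdep} delivers it only for $i\le i^*$. This does not affect the argument, because the proof of \Cref{thm:randomsum:main} for a fixed $i$ (and Lemma~\ref{aux_1} feeding into it) uses only the MRV information on cones of index $\le i$; but strictly speaking you are invoking the proof of that theorem cone-by-cone rather than its statement as written.
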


\begin{example}  $\mbox{}$
\begin{enumerate}
    \item[(a)] Completely dependent case: Let $L_1=(L_1(s))_{s\geq 0}$ be a L\'evy process in $\R_+$ with univariate marginal L\'evy measure $\Pi_\alpha\in \MRV(\alpha,b_{\alpha},\mu_\alpha,(0,\infty))$ and $\bL=(L_1,\ldots,L_1)$. Then the L\'evy measure of $\bL$ is given by
    \beao
        \Pi(A)=\min_{j\in S}\Pi_\alpha((x_j,\infty))
    \eeao
    for a rectangular set $A= \{\bz \in \R_+^d: z_j>x_j \;\forall\; j\in S\}$  with
 $S\subseteq \mathbb{I}$ and  $x_{j}>0$ for $j\in S$.
   In this case, we are in the setting of \Cref{cor:asymdep} with $i^*=d$ and $\mu(A)=\min_{j\in S}x_j^{-\alpha}$ for
   a rectangular set $A$ as above.
   
    More generally,  if the marginal 
    tail L\'evy measures are not necessarily identical {but are completely tail equivalent} satisfying $ \Pi_j\in \MRV(\alpha,b_{\alpha},\mu_\alpha,(0,\infty))$
for $j=1,\ldots,d$
    and 
    \beao
        \Pi(A)=\min_{j\in S}\Pi_j(x_j,\infty)
    \eeao
    then $\Pi\in \MRV(\alpha,b_{\alpha},\mu,\E_d^{(i)})$ for $i=1,\ldots,d$ as well and the assumptions of \Cref{corollary2} are satisfied. Indeed, this is a L\'evy measure, it is constructed by the complete dependence L\'evy copula (cf. \cite{Kallsen:Tankov:2006}).
    \item[(b)]  Suppose $L_j$, $j=1,\ldots,d$ are L\'evy processes in $\R_+$ with univariate {completely tail equivalent} marginal L\'evy measures $\Pi_j\in \MRV(\alpha,b_{\alpha},\mu_\alpha,(0,\infty))$
    and $\bL(s)=(L_1(s),\ldots,L_d(s))$ is a $d$-dimensional
    L\'evy process with L\'evy measure
    \beao
        \Pi(A)=\left(\sum_{j\in S}\left(\Pi_j((x_j,\infty))\right)^{-\theta}\right)^{\frac{1}{\theta}}
    \eeao
    for some $\theta>0$, where $A= \{\bz \in \R_+^d: z_j>x_j \;\forall\; j\in S\}$ is a rectangular set with
 $S\subseteq \mathbb{I}$  and  $x_{j}>0$ for $j\in S$.
    This  L\'evy measure is constructed using the Clayton L\'evy copula (cf. \cite{Kallsen:Tankov:2006}). Let the measure $\mu$ { on $\mathcal{B}^{(1)}$} be defined as
    \beao
        \mu(A)=\left(\sum_{j\in S}x_j^{-\alpha\theta}\right)^{-\frac{1}{\theta}}
    \eeao
    for a rectangular set $A$ as above.  Then $\Pi\in
    \MRV(\alpha,b_{\alpha},\mu,\E_d^{(i)})$ for $i=1,\ldots,d$
    and hence, due to \Cref{corollary2}, 
    $\bL(s)\in
    \MRV(\alpha,b_{\alpha},s\mu,\E_d^{(i)})$ for $i=1,\ldots,d$ as well. 
    \item[(c)]{Another (dependent) example of a L\'evy process can be constructed by a compound Poisson process where the jumps sizes have the distribution $F$ as in \Cref{ex:cop:dep}.}
\end{enumerate}
\end{example}

\begin{remark}
Regular variation of the L\'evy measure on different subcones $\E_d^{(i)}$ can be related to regular variation of the L\'evy copula and Pareto L\'evy copula, respectively on these different subcones; cf. \cite{kluppelberg:resnick:2008,eder:kluppelberg:2012} for classical regular variation of such L\'evy measure on $\E_d^{(1)}$.  This work is under investigation by the authors.
\end{remark}

\subsection{The L\'evy process is asymptotically tail independent}
In \Cref{Proposition 6.1} and subsequently \Cref{corollary2}, the underlying L\'evy measure admits regular variation on all relevant subcones; but this may not necessarily be the case in general.
The next result includes the cases where the L\'evy measure is adapted multivariate regularly varying; i.e., MRV need not exist in all the relevant subcones.

\begin{proposition}[Extending \Cref{prop:Delta1}] \label{Proposition 6.3}
Let $(\bL(s))_{s\geq 0}$ be a L\'evy process in $\R_+^d$ with
 L\'evy measure $\Pi$ such that $\{\Pi \in \MRV^*(\alpha_i,b_i,\mu_i,\E_d^{(i)}), i=1,\ldots,d; \Delta=1\}$ and $\Pi$ has tail equivalent univariate marginal L\'evy measures in $\RV_{-\alpha}$.  Then for $s>0$ we have
\begin{eqnarray*}
    \bL(s)\in \MRV(i\alpha_1,b_1^{1/i},s^i \mu_i^L,\E_d^{(i)})
    \quad \text {for } \quad i=1,\ldots,  d.
\end{eqnarray*}
with
\begin{align}\label{eq:limit:muiplusdelta1new}
    \mu_{i}^{L}\Big(\bigcap_{j\in S}\{\bz\in \E_d^{(i)}:z_j>x_j\}\Big)= \prod_{j\in S}
    \mu_1\left(\{\bz\in \E_d^{(i)}:z_j>x_j\}\right).
    \end{align}
    for $S\subseteq \mathbb{I}$ with $|S|=i$, $x_j>0$ for $j\in S$  and $\mu_{i}^{L}(\E_d^{(i+1)})=0$.
\end{proposition}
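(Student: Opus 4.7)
My plan is to use a truncation based on jump sizes to decompose $\bL(s)$ into a compound Poisson part (large jumps) plus a light-tailed remainder, and then chain together Proposition~\ref{prop:Delta1}, Theorem~\ref{thm:randomsum:main}, and Lemma~\ref{Lemma_aux}.

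\textbf{Step 1 (L\'evy-It\^o decomposition).} Fix any $\epsilon \in (0,1)$. Since each component of $\bL$ is a subordinator, I can write
\[
\bL(s) \;=\; \bD_{\epsilon}(s) \;+\; \bL_{\epsilon}^{+}(s),
\]
where $\bL_{\epsilon}^{+}(s) = \sum_{k=1}^{N_{\epsilon}(s)} \bZ_{\epsilon}^{(k)}$ is a compound Poisson process with arrival rate $\lambda_{\epsilon} := \Pi(\{\|\bx\|\geq\epsilon\}) < \infty$ and i.i.d.\ jumps $\bZ_{\epsilon}^{(k)}$ whose common law is $\Pi(\,\cdot\, \cap \{\|\bx\|\geq\epsilon\})/\lambda_{\epsilon}$, while $\bD_{\epsilon}(s)$ combines the drift and the subordinator driven by $\Pi|_{\{\|\bx\|<\epsilon\}}$. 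The two summands are independent, and because $\bD_{\epsilon}(s)$ has bounded jumps it has finite moments of every order.

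\textbf{Step 2 (Jump law is adapted-MRV).} Every set $A$ that is bounded away from $\bCA_d^{(i-1)}$ is in particular bounded away from $\bzero$, so for $t$ sufficiently large, $b_{i}(t) A \subset \{\|\bx\|\geq\epsilon\}$. Hence
\[
t\,\P\!\left(\bZ_{\epsilon}^{(1)}/b_{i}(t) \in A\right) \;=\; \lambda_{\epsilon}^{-1}\, t\,\Pi(b_{i}(t) A),
\]
for all large $t$. The hypothesis $\{\Pi\in\MRV^{*}(\alpha_{i},b_{i},\mu_{i},\E_{d}^{(i)}), i=1,\ldots,d;\Delta=1\}$ then transfers verbatim to $\bZ_{\epsilon}^{(1)}$ with limit measures $\mu_{i}/\lambda_{\epsilon}$, and the tail equivalence of the marginal L\'evy measures in $\RV_{-\alpha_1}$ gives tail equivalent marginals of $\bZ_{\epsilon}^{(1)}$.

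\textbf{Step 3 (Finite convolutions).} Proposition~\ref{prop:Delta1} applied to $(\bZ_{\epsilon}^{(k)})_{k\in\N}$ yields for every $n \geq i$,
\[
\sum_{k=1}^{n}\bZ_{\epsilon}^{(k)} \in \MRV\!\left(i\alpha_{1},\, b_{1}^{1/i},\, \frac{n!}{(n-i)!}\,\lambda_{\epsilon}^{-i}\,\mu_{i}^{L},\, \E_{d}^{(i)}\right),
\]
with $\mu_{i}^{L}$ given by \eqref{eq:limit:muiplusdelta1new}. Hence Assumption~\ref{AssumptionA} is satisfied for $(\bZ_{\epsilon}^{(k)})_{k\in\N}$ with $f_{i}(n)=n(n-1)\cdots(n-i+1)$ and limit measure $\lambda_{\epsilon}^{-i}\mu_{i}^{L}$.

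\textbf{Step 4 (Random-sum step, then Lemma~\ref{Lemma_aux}).} The Poisson variable $N_{\epsilon}(s)$ has finite moment generating function on all of $\R$, so Theorem~\ref{thm:randomsum:main} applies and gives
\[
\bL_{\epsilon}^{+}(s) \in \MRV\!\left(i\alpha_{1},\, b_{1}^{1/i},\, \E\!\left[\tfrac{N_{\epsilon}(s)!}{(N_{\epsilon}(s)-i)!}\right]\lambda_{\epsilon}^{-i}\mu_{i}^{L},\, \E_{d}^{(i)}\right).
\]
The $i$-th factorial moment of a Poisson$(s\lambda_{\epsilon})$ random variable equals $(s\lambda_{\epsilon})^{i}$, so the coefficient collapses to $s^{i}$ and the limit measure is $s^{i}\mu_{i}^{L}$, which is independent of the truncation level $\epsilon$ as it must be. Finally, since $\E\|\bD_{\epsilon}(s)\|^{i\alpha_{1}+\theta}<\infty$ for some (any) $\theta>0$, Lemma~\ref{Lemma_aux} shows that adding $\bD_{\epsilon}(s)$ does not alter the limit on $\E_{d}^{(i)}$, giving the desired conclusion. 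The main subtlety is the null-convergence bookkeeping in Step~2, but this is automatic once one observes that the truncation $\{\|\bx\|\geq\epsilon\}$ is asymptotically invisible on any set bounded away from $\bCA_{d}^{(i-1)}$; the remaining steps are essentially assembly.
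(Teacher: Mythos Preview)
Your proof is correct and follows essentially the same approach as the paper: a L\'evy--It\^o decomposition into a compound Poisson part plus a remainder with all moments finite, then Proposition~\ref{prop:Delta1} for the finite sums, Theorem~\ref{thm:randomsum:main} for the Poisson-randomised sum (using the factorial-moment identity $\E[N_{\epsilon}(s)(N_{\epsilon}(s)-1)\cdots(N_{\epsilon}(s)-i+1)]=(s\lambda_{\epsilon})^{i}$ so that the $\lambda_{\epsilon}^{-i}$ cancels), and finally Lemma~\ref{Lemma_aux} to absorb the light-tailed piece. The paper argues more tersely by referring back to the decomposition step in the proof of Proposition~\ref{Proposition 6.1}, but the substance is identical.
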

Interestingly, for rectangular sets $A\in\mathcal{R}^{(i)}$ as in \eqref{eq:setA} with $|S|=i$, now  we observe that
\begin{eqnarray} \label{eq;1*}
    \P(\bL(s)\in tA)\sim s^{i}\,\P(\bL(1)\in tA) \quad \text{ as }t\to\infty.
\end{eqnarray}
Hence, the linearity property of $\P(\bL(s)\in tA)\sim s\,\P(\bL(1)\in tA)$ as $t\to\infty$, which we had noticed in the dependent cases of 
\Cref{Proposition 6.1} and \Cref{corollary2} respectively,  vanishes here making this an unusual phenomenon for L\'evy processes. Moreover, although $\Pi$ is MRV on $\E_d^{(1)}$, for sets $A\in\mathcal{R}^{(2)}$ the tail measures
$\Pi(tA)$ and $\P(\bL(1)\in tA)$ are not tail equivalent anymore, in contrast to the common wisdom
for regular variation of L\'evy processes on $\E_d^{(1)}$.

\begin{example} $\mbox{}$
\begin{enumerate}
\item[(a)] Suppose the marginal L\'evy processes $L_1,\ldots,L_d$ of $\bL=(L_1,\ldots,L_d)$
are independent with tail equivalent univariate marginal L\'evy measures $ \Pi_j $ which are regularly varying with tail index $-\alpha<0$. Then the L\'evy measure of $\bL$ is
\beao
    \Pi(A)=\sum_{j=1}^d\Pi_j(A_j)
\eeao
for $A_j=\{z\in\R_+:(0,\ldots,0,z,0,\ldots,0)\in A\}$, where $z$ appears in the $j$-th coordinate. This L\'evy measure 
has  mass only on the co-ordinate axes. 
Hence, the assumptions of \Cref{Proposition 6.3} are satisfied and it can be applied to show MRV of $\bL$ on various subcones. {In particular, it satisfies \eqref{eq;1*}.}
\item[(b)]  A compound Poisson process as defined in \Cref{Example:5.2} with  jumps sizes $(\bZ^{(k)})_{k\in\N}$ as in \Cref{ex:mrvsum:delta1} would also satisfy the assumptions of \Cref{Proposition 6.3}, providing an example for the same.
\end{enumerate}
\end{example}

\subsection{The L\'evy measure is asymptotically tail independent}
The next proposition covers the case of a compound Poisson process, as a special L\'evy process, where the marginal distribution of the jump sizes 
are independent as well. The observed phenomena is again different from \Cref{Proposition 6.3}, which covers a compound Poisson process with independent marginal L\'evy processes. 


\begin{proposition}[Extending \Cref{prop:ind}] \label{Corollary:2}
Let $(\bL(s))_{s\geq 0}$ be a L\'evy process in $\R_+^d$ with L\'evy measure $\Pi\in\MRV(\alpha_i,b_i,\mu_i,\E_d^{(i)})$  for $i=1,\ldots,d$
where $b_i(t)=(b_1(t))^{1/i}$, $b_1(t)\in \RV_{1/\alpha}$ and
    \begin{eqnarray*}
            \mu_i\left(\bigcap_{j\in S}\{\bz\in \E_d^{(i)}:z_j>x_j\}\right)=\prod_{j\in S}\kappa_j x_j^{\alpha}
           \end{eqnarray*}
         for $S\subseteq \mathbb{I}$ with $|S|=i$, $x_j>0$ for $j\in S$   and $\mu_{i}(\E_d^{(i+1)})=0$, $i=1,\ldots, d$,
         and $\Pi$ has tail equivalent univariate marginal L\'evy measures. Let $(N^*(s))_{s\geq 0}$ denote a Poisson
         process with intensity 1.
         Then for $s>0$ we have
         \beao
                \bL(s)\in\MRV(i\alpha,b_1^{1/i},\E(N^*(s)^{i})\mu_i,\E_d^{(i)}) \quad  \text {for } \quad i=1,\ldots,d.
         \eeao
\end{proposition}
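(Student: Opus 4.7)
The plan is to reduce the claim to an application of Theorem \ref{thm:randomsum:main} via the compound Poisson approximation of $\bL$. Fix $\eta>0$ and decompose $\bL(s) = \bL^{[\eta]}(s) + \widetilde\bL^{[\eta]}(s)$, where $\bL^{[\eta]}$ collects the jumps with $\|\Delta\bL(u)\|>\eta$. Then $\bL^{[\eta]}(s) = \sum_{k=1}^{N(s)} \bZ^{(k)}$ is a compound Poisson process with $N(s)\sim\mathrm{Poisson}(s\lambda_\eta)$ for $\lambda_\eta:=\Pi(\{\|\bz\|>\eta\})$, and i.i.d. jumps $\bZ^{(k)}$ of law $G_\eta:=\Pi(\cdot\cap\{\|\bz\|>\eta\})/\lambda_\eta$; the remainder $\widetilde\bL^{[\eta]}$ has moments of every order, so Lemma \ref{Lemma_aux} reduces the problem to analyzing $\bL^{[\eta]}(s)$.

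Next I verify Assumption \ref{AssumptionA} for $(\bZ^{(k)})$. Since $\eta$-truncation does not affect tails, $\bZ^{(k)}\in\MRV(i\alpha, b_1^{1/i}, \mu_i/\lambda_\eta, \E_d^{(i)})$ for each $i$, retaining the product form. By Proposition \ref{prop:rectsetsforM} it suffices to work with a rectangle $A=\{\bz:z_j>x_j,\,j\in S\}\in\mathcal{R}^{(i)}$, $|S|=i$. I enumerate the asymptotically dominant configurations of $\{\sum_{k=1}^n\bZ^{(k)}\in tA\}$: each $j\in S$ has a ``designated'' jump index $k^*(j)\in\{1,\dots,n\}$ with $Z_j^{(k^*(j))}>tx_j$, and the fibres of $k^*$ form a set partition $P$ of $S$ into $m\le i$ blocks. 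For each unordered partition into $m$ blocks (there are $S(i,m)$, a Stirling number of the second kind) and each injection of the blocks into $\{1,\dots,n\}$, the asymptotic contribution factors as $\prod_{B\in P}\P(\bZ^{(k_B)}\in\{z_j>tx_j,\,j\in B\})$; by the product form $\mu_{|B|}(\{z_j>x_j,j\in B\})=\prod_{j\in B}\kappa_j x_j^{-\alpha}$, this telescopes, since $\sum_{B\in P}|B|=i$, to $\mu_i(A)/\lambda_\eta^m$ independently of $P$, modulo the appropriate scaling. Summing over $P$ and $m$ gives
\[
b_i^{\leftarrow}(t)\,\P\Bigl(\sum_{k=1}^n\bZ^{(k)}\in tA\Bigr)\longrightarrow \mu_i(A)\sum_{m=1}^i S(i,m)\,\frac{n(n-1)\cdots(n-m+1)}{\lambda_\eta^m},
\]
verifying Assumption \ref{AssumptionA} with $f_i(n)=\sum_{m=1}^i S(i,m)\,n_{(m)}/\lambda_\eta^m$ and the $n$-independent limit measure $\mu_i$.

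Applying Theorem \ref{thm:randomsum:main} (its moment condition $\E(\kappa^{N(s)})=e^{s\lambda_\eta(\kappa-1)}<\infty$ is immediate) yields $\bL^{[\eta]}(s)\in\MRV(i\alpha, b_1^{1/i}, \E(f_i(N(s)))\mu_i, \E_d^{(i)})$. Using the Poisson factorial-moment identity $\E(N(s)_{(m)})=(s\lambda_\eta)^m$, I compute
\[
\E(f_i(N(s)))=\sum_{m=1}^i S(i,m)\,\frac{(s\lambda_\eta)^m}{\lambda_\eta^m}=\sum_{m=1}^i S(i,m)\,s^m=\E(N^*(s)^i),
\]
the last equality being the Touchard polynomial identity for $N^*(s)\sim\mathrm{Poisson}(s)$. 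The cancellation of $\lambda_\eta$ confirms internal consistency, as the limit is independent of the arbitrary truncation threshold.

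The main technical obstacle is to rigorously justify the finite-sum partition enumeration. I plan to use Bonferroni inclusion-exclusion: the correction terms corresponding to configurations where two or more jumps are simultaneously large in the same coordinate carry extra $\Pi(\{z_j>tx_j\})=O(b_1^{\leftarrow}(t)^{-1})$ factors and are dominated by the $O(b_i^{\leftarrow}(t)^{-1})$ main terms precisely because of the product form of $\mu_i$. The interchange of the sum over $n$ with the limit $t\to\infty$ in $\P(\bL^{[\eta]}(s)\in tA)=\sum_n \P(N(s)=n)\,\P(\sum_{k=1}^n\bZ^{(k)}\in tA)$ is justified by the polynomial bound $f_i(n)\le C_i n^i$ (cf. \Cref{Remark 1}(a)) together with the exponential tails of the Poisson distribution.
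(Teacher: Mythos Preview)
Your overall architecture matches the paper's: reduce to the compound Poisson piece via the L\'evy--It\^o decomposition and Lemma~\ref{Lemma_aux}, then invoke Theorem~\ref{thm:randomsum:main}. The difference lies in how you handle the jump-size distribution.

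You keep a generic truncation level $\eta$, so your jumps have law $G_\eta=\Pi(\cdot\cap\{\|\bz\|>\eta\})/\lambda_\eta$ and limit measures $\mu_i/\lambda_\eta$. These measures are \emph{not} of the product form in \eqref{star} (the extra factor $1/\lambda_\eta$ is global, not one per coordinate), so Proposition~\ref{prop:ind} does not give you $f_i(n)=n^i$. You compensate by a direct partition enumeration, arriving at $f_i(n)=\sum_{m=1}^i S(i,m)\,n_{(m)}/\lambda_\eta^m$, and then recover $\E(N^*(s)^i)$ via the Touchard identity $\E(N(s)_{(m)})=(s\lambda_\eta)^m$. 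This is correct (I checked small cases against Theorem~\ref{mainTheorem}), and the cancellation of $\lambda_\eta$ is an elegant consistency check; but the enumeration step is only a sketch and would require the full inclusion--exclusion machinery of Lemma~\ref{Lemma 3.4} to make rigorous.

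The paper sidesteps all of this with one trick: choose the truncation radius $a$ large enough that $\Pi(D_{a,\infty})\le 1$, and then represent the big-jump compound Poisson process as $\sum_{k=1}^{N^*(s)}\tilde\bZ^{(k)}$ with a \emph{unit-rate} Poisson process $N^*$ and jumps $\tilde\bZ$ that equal $\bzero$ with probability $1-\Pi(D_{a,\infty})$ and otherwise follow $\Pi(\cdot\cap D_{a,\infty})$. Since $\P(\tilde\bZ\in A)=\Pi(A)$ for any $A$ bounded away from $\bzero$, the padded jumps inherit \emph{exactly} the limit measures $\mu_i$ with the product form intact. Proposition~\ref{prop:ind} then gives $f_i(n)=n^i$ outright, and $\E(f_i(N^*(s)))=\E(N^*(s)^i)$ is immediate; no Stirling numbers needed. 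Your approach is valid, but the paper's normalisation buys a one-line finish where yours needs a substantial combinatorial lemma.
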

As a consequence of \Cref{Corollary:2}, for any rectangular set $A\in\mathcal{R}^{(i)}$ as in \eqref{eq:setA} with $|S|=i$
we obtain
\begin{eqnarray*}
    \P(\bL(s)\in tA)\sim \frac{\E(N^*(s)^{i})}{\E(N^*(1)^{i})}\P(\bL(1)\in tA) \quad \text{ as }t\to\infty,
\end{eqnarray*}
where $\E(N^*(s)^{i})$ is a polynomial of order $i$ in $s$, and $s^i\leq \E(N^*(s)^{i})$.

\begin{remark} $\mbox{}$
\begin{enumerate}
    \item[(a)] We can verify that indeed the result in \Cref{Corollary:2} is in accordance with  \Cref{mainTheorem}. From \Cref{Corollary:2} we get that for the  i.i.d.  random vectors
\begin{eqnarray*}
    \bL(1) \text{ and } \bL(2)-\bL(1)\in\MRV(i\alpha,b_1^{1/i},\E(N^*(1)^{i})\mu_i,\E_d^{(i)}) \quad  \text {for } \quad i=1,\ldots,d.
\end{eqnarray*}
Applying \Cref{mainTheorem} gives
\begin{eqnarray*}
    \bL(2)= \bL(1)+ [\bL(2)-\bL(1)]\in\MRV(i\alpha,b_1^{1/i},\mu_i^{\oplus},\E_d^{(i)}) \quad  \text {for } \quad i=1,\ldots,d
\end{eqnarray*}
with
\begin{eqnarray*}
    \mu_i^{\oplus}&=&\sum_{m=0}^i\binom{i}{m}
        \E(N^*(1)^m)\E([N^*(2)-N^*(1)]^{i-m})\mu_i\\
        &=&\E\left(\sum_{m=0}^i\binom{i}{m}
        N^*(1)^m[N^*(2)-N^*(1)]^{i-m}\right)\mu_i\\
        &=&\E(N^*(1)+[N^*(2)-N^*(1)])^{i}\mu_i
        =\E(N^*(2)^{i})\mu_i
\end{eqnarray*}
which is also a consequence of \Cref{Corollary:2}.

\item[(b)]
Suppose $(\bL(s))_{s\geq 0}$ is a compound Poisson process with L\'evy measure \linebreak $\Pi\in\MRV(\alpha_i,b_i,K\mu_i,\E_d^{(i)})$  for $i=1,\ldots,d$ with $\alpha_i,b_i,$ and $\mu_i$ as in \Cref{Corollary:2}
and $K>0$ is some positive constant. Furthermore, suppose the marginal L\'evy measures of $\Pi$ are tail-equivalent. Let $(\widetilde \bL(s))_{s\geq }$
be another compound Poisson process with L\'evy measure $\widetilde \Pi
=\Pi/K$. Then $\widetilde \Pi\in\MRV(\alpha_i,b_i,\mu_i,\E_d^{(i)})$ for $i=1,\ldots,d$
and due to \Cref{prop:ind}, we have
\beao
                \widetilde \bL(s)\in\MRV(i\alpha,b_1^{1/i},\E(N^*(s)^{i})\mu_i,\E_d^{(i)}) \quad  \text {for } \quad i=1,\ldots,d.
         \eeao
But $\bL(s)\eqd \widetilde \bL(Ks)$ and hence, we have
\beao
                 \bL(s)\in\MRV(i\alpha,b_1^{1/i},\E(N^*(Ks)^{i})\mu_i,\E_d^{(i)}) \quad  \text {for } \quad i=1,\ldots,d.
\eeao
\end{enumerate}
\end{remark}

\begin{example}
Consider the compound Poisson process
\beao
    \bL(s)=\sum_{k=1}^{N^*(s)}(Z_1^{(k)},\ldots,Z_d^{(k)})
\eeao
where $(N^*(s))_{s\geq 0}$ is a Poisson process with intensity $1$, which is independent of the i.i.d. sequence of jump sizes $(Z_1^{(k)},\ldots,Z_d^{(k)})_{k\in\N}$. Suppose 
$(Z_1^{(k)})_{k\in\N}$, \ldots, $(Z_d^{(k})_{k\in\N}$
are as well independent of each other with tail equivalent marginal distributions $F_{j}$ and $\ov F_j\in\mathcal{RV}_{-\alpha}$. Then 
for a rectangular set $A= \{\bz \in \R_+^d: z_j>x_j \;\forall\; j\in S\}$  with
 $S\subseteq \mathbb{I}$ and  $x_{j}>0$ for $j\in S$ we have
\beao
    \Pi(A)=\prod_{j\in S}^d\ov F_j(x_j).
\eeao
Thus, the assumptions  of \Cref{Corollary:2} are again satisfied {and hence, can be applied here}.
\end{example}

\bibliographystyle{imsart-nameyear}
\bibliography{bibfilenew}

\def\cprime{$'$}
\begin{thebibliography}{44}

\bibitem[\protect\citeauthoryear{Applebaum}{2004}]{applebaum:2004}
\begin{bbook}[author]
\bauthor{\bsnm{Applebaum},~\bfnm{D.}\binits{D.}}
(\byear{2004}).
\btitle{L\'evy {P}rocesses and {S}tochastic {C}alculus}.
\bseries{Cambridge Studies in Advanced Mathematics}
\bvolume{93}.
\bpublisher{Cambridge University Press}, \baddress{Cambridge}.
\end{bbook}
\endbibitem

\bibitem[\protect\citeauthoryear{Asmussen}{2003}]{asmussen:2003}
\begin{bbook}[author]
\bauthor{\bsnm{Asmussen},~\bfnm{S.}\binits{S.}}
(\byear{2003}).
\btitle{{A}pplied {P}robability and {Q}ueues},
\bedition{second} ed.
\bseries{Applications of Mathematics (New York)}
\bvolume{51}.
\bpublisher{Springer}, \baddress{New York}.
\bnote{Stochastic Modelling and Applied Probability}.
\end{bbook}
\endbibitem

\bibitem[\protect\citeauthoryear{Basrak and
  Planinić}{2019}]{basrak:planinic:2019}
\begin{barticle}[author]
\bauthor{\bsnm{Basrak},~\bfnm{Bojan}\binits{B.}} \AND
  \bauthor{\bsnm{Planinić},~\bfnm{Hrvoje}\binits{H.}}
(\byear{2019}).
\btitle{A note on vague convergence of measures}.
\bjournal{Statistics \& Probability Letters}
\bvolume{153}
\bpages{180-186}.
\end{barticle}
\endbibitem

\bibitem[\protect\citeauthoryear{Bingham, Goldie and
  Teugels}{1989}]{bingham:goldie:teugels:1989}
\begin{bbook}[author]
\bauthor{\bsnm{Bingham},~\bfnm{N.~H.}\binits{N.~H.}},
  \bauthor{\bsnm{Goldie},~\bfnm{C.~M.}\binits{C.~M.}} \AND
  \bauthor{\bsnm{Teugels},~\bfnm{J.~L.}\binits{J.~L.}}
(\byear{1989}).
\btitle{Regular Variation}.
\bseries{Encyclopedia of Mathematics and its Applications}
\bvolume{27}.
\bpublisher{Cambridge University Press}, \baddress{Cambridge}.
\end{bbook}
\endbibitem

\bibitem[\protect\citeauthoryear{Charpentier and
  Segers}{2009}]{Charpentier:Segers}
\begin{barticle}[author]
\bauthor{\bsnm{Charpentier},~\bfnm{Arthur}\binits{A.}} \AND
  \bauthor{\bsnm{Segers},~\bfnm{Johan}\binits{J.}}
(\byear{2009}).
\btitle{Tails of multivariate {A}rchimedean copulas}.
\bjournal{J. Multivariate Anal.}
\bvolume{100}
\bpages{1521--1537}.
\end{barticle}
\endbibitem

\bibitem[\protect\citeauthoryear{Chistyakov}{1964}]{chistyakov:1964}
\begin{barticle}[author]
\bauthor{\bsnm{Chistyakov},~\bfnm{V.~P.}\binits{V.~P.}}
(\byear{1964}).
\btitle{A theorem on sums of independent random variables and its applications
  to branching random processes}.
\bjournal{Theory Probab. Appl.}
\bvolume{9}
\bpages{640-648}.
\end{barticle}
\endbibitem

\bibitem[\protect\citeauthoryear{Cline and Resnick}{1992}]{Cline:Resnick:1992}
\begin{barticle}[author]
\bauthor{\bsnm{Cline},~\bfnm{Daren B.~H.}\binits{D.~B.~H.}} \AND
  \bauthor{\bsnm{Resnick},~\bfnm{Sidney~I.}\binits{S.~I.}}
(\byear{1992}).
\btitle{Multivariate subexponential distributions}.
\bjournal{Stochastic Process. Appl.}
\bvolume{42}
\bpages{49--72}.
\end{barticle}
\endbibitem

\bibitem[\protect\citeauthoryear{Das and Fasen-Hartmann}{2018}]{das:fasen:2018}
\begin{barticle}[author]
\bauthor{\bsnm{Das},~\bfnm{B.}\binits{B.}} \AND
  \bauthor{\bsnm{Fasen-Hartmann},~\bfnm{V.}\binits{V.}}
(\byear{2018}).
\btitle{Risk contagion under regular variation and asymptotic tail
  independence}.
\bjournal{Journal of Multivariate Analysis}
\bvolume{165}
\bpages{194--215}.
\end{barticle}
\endbibitem

\bibitem[\protect\citeauthoryear{Das, Fasen-Hartmann and
  Kl{\"u}ppelberg}{2022}]{das:fasen:kluppelberg:2022}
\begin{barticle}[author]
\bauthor{\bsnm{Das},~\bfnm{Bikramjit}\binits{B.}},
  \bauthor{\bsnm{Fasen-Hartmann},~\bfnm{Vicky}\binits{V.}} \AND
  \bauthor{\bsnm{Kl{\"u}ppelberg},~\bfnm{Claudia}\binits{C.}}
(\byear{2022}).
\btitle{Tail probabilities of random linear functions of regularly varying
  random vectors}.
\bjournal{Extremes}
\bvolume{25}
\bpages{721-758}.
\end{barticle}
\endbibitem

\bibitem[\protect\citeauthoryear{Das and
  Fasen-Hartmann}{2023a}]{das:fasen:2023}
\begin{barticle}[author]
\bauthor{\bsnm{Das},~\bfnm{Bikramjit}\binits{B.}} \AND
  \bauthor{\bsnm{Fasen-Hartmann},~\bfnm{Vicky}\binits{V.}}
(\byear{2023}a).
\btitle{Risk estimation in multivariate heavy-tailed models}.
\bjournal{Ongoing work}.
\end{barticle}
\endbibitem

\bibitem[\protect\citeauthoryear{Das and
  Fasen-Hartmann}{2023b}]{das:fasen:2022a}
\begin{barticle}[author]
\bauthor{\bsnm{Das},~\bfnm{Bikramjit}\binits{B.}} \AND
  \bauthor{\bsnm{Fasen-Hartmann},~\bfnm{Vicky}\binits{V.}}
(\byear{2023}b).
\btitle{Multivariate regular variation and the Gaussian copula}.
\bjournal{Ongoing work}.
\end{barticle}
\endbibitem

\bibitem[\protect\citeauthoryear{Das and Resnick}{2015}]{das:resnick:2015}
\begin{barticle}[author]
\bauthor{\bsnm{Das},~\bfnm{B.}\binits{B.}} \AND
  \bauthor{\bsnm{Resnick},~\bfnm{S.~I.}\binits{S.~I.}}
(\byear{2015}).
\btitle{Models with hidden regular variation: Generation and detection}.
\bjournal{Stochastic Systems}
\bvolume{5}
\bpages{195-238}.
\end{barticle}
\endbibitem

\bibitem[\protect\citeauthoryear{de~Haan and
  Ferreira}{2006}]{dehaan:ferreira:2006}
\begin{bbook}[author]
\bauthor{\bparticle{de} \bsnm{Haan},~\bfnm{L.}\binits{L.}} \AND
  \bauthor{\bsnm{Ferreira},~\bfnm{A.}\binits{A.}}
(\byear{2006}).
\btitle{Extreme Value Theory: An Introduction}.
\bpublisher{Springer-Verlag}, \baddress{New York}.
\end{bbook}
\endbibitem

\bibitem[\protect\citeauthoryear{Eder and
  Kl\"{u}ppelberg}{2012}]{eder:kluppelberg:2012}
\begin{barticle}[author]
\bauthor{\bsnm{Eder},~\bfnm{Irmingard}\binits{I.}} \AND
  \bauthor{\bsnm{Kl\"{u}ppelberg},~\bfnm{Claudia}\binits{C.}}
(\byear{2012}).
\btitle{Pareto {L}\'{e}vy measures and multivariate regular variation}.
\bjournal{Adv. in Appl. Probab.}
\bvolume{44}
\bpages{117--138}.
\end{barticle}
\endbibitem

\bibitem[\protect\citeauthoryear{Embrechts, Goldie and
  Veraverbeke}{1979}]{embrechts:goldie:veraverbeke:1979}
\begin{barticle}[author]
\bauthor{\bsnm{Embrechts},~\bfnm{P.}\binits{P.}},
  \bauthor{\bsnm{Goldie},~\bfnm{C.~M.}\binits{C.~M.}} \AND
  \bauthor{\bsnm{Veraverbeke},~\bfnm{N.}\binits{N.}}
(\byear{1979}).
\btitle{Subexponentiality and infinite divisibility}.
\bjournal{Z. Wahrscheinlichkeitstheorie und Verw. Gebiete}
\bvolume{49}
\bpages{335-347}.
\end{barticle}
\endbibitem

\bibitem[\protect\citeauthoryear{Embrechts, Kl\"{u}ppelberg and
  Mikosch}{1997}]{embrechts:kluppelberg:mikosch:1997}
\begin{bbook}[author]
\bauthor{\bsnm{Embrechts},~\bfnm{P.}\binits{P.}},
  \bauthor{\bsnm{Kl\"{u}ppelberg},~\bfnm{C.}\binits{C.}} \AND
  \bauthor{\bsnm{Mikosch},~\bfnm{T.}\binits{T.}}
(\byear{1997}).
\btitle{Modelling Extreme Events for Insurance and Finance}.
\bpublisher{Springer-Verlag, Berlin}.
\end{bbook}
\endbibitem

\bibitem[\protect\citeauthoryear{Furman
  et~al.}{2016}]{furman:kuznetsov:suzitikis:2016}
\begin{barticle}[author]
\bauthor{\bsnm{Furman},~\bfnm{Edward}\binits{E.}},
  \bauthor{\bsnm{Kuznetsov},~\bfnm{Alexey}\binits{A.}},
  \bauthor{\bsnm{Su},~\bfnm{Jianxi}\binits{J.}} \AND
  \bauthor{\bsnm{Zitikis},~\bfnm{Ričardas}\binits{R.}}
(\byear{2016}).
\btitle{Tail dependence of the Gaussian copula revisited}.
\bjournal{Insurance: Mathematics and Economics}
\bvolume{69}
\bpages{97-103}.
\end{barticle}
\endbibitem

\bibitem[\protect\citeauthoryear{Heffernan}{2000}]{heffernan:2000}
\begin{barticle}[author]
\bauthor{\bsnm{Heffernan},~\bfnm{Janet}\binits{J.}}
(\byear{2000}).
\btitle{A directory of coefficients of tail dependence}.
\bjournal{Extremes}
\bvolume{3}
\bpages{279-290}.
\end{barticle}
\endbibitem

\bibitem[\protect\citeauthoryear{Hua and Joe}{2011}]{Hua:Joe:2011}
\begin{barticle}[author]
\bauthor{\bsnm{Hua},~\bfnm{Lei}\binits{L.}} \AND
  \bauthor{\bsnm{Joe},~\bfnm{Harry}\binits{H.}}
(\byear{2011}).
\btitle{Tail order and intermediate tail dependence of multivariate copulas}.
\bjournal{J. Multivariate Anal.}
\bvolume{102}
\bpages{1454--1471}.
\end{barticle}
\endbibitem

\bibitem[\protect\citeauthoryear{Hua, Joe and Li}{2014}]{hua:joe:li:2014}
\begin{barticle}[author]
\bauthor{\bsnm{Hua},~\bfnm{Lei}\binits{L.}},
  \bauthor{\bsnm{Joe},~\bfnm{Harry}\binits{H.}} \AND
  \bauthor{\bsnm{Li},~\bfnm{Haijun}\binits{H.}}
(\byear{2014}).
\btitle{Relations between hidden regular variation and the tail order of
  copulas}.
\bjournal{Journal of Applied Probability}
\bvolume{51}
\bpages{37--57}.
\end{barticle}
\endbibitem

\bibitem[\protect\citeauthoryear{Hult and
  Lindskog}{2005}]{hult:lindskog:2005SPA}
\begin{barticle}[author]
\bauthor{\bsnm{Hult},~\bfnm{H.}\binits{H.}} \AND
  \bauthor{\bsnm{Lindskog},~\bfnm{F.}\binits{F.}}
(\byear{2005}).
\btitle{Extremal behavior of regularly varying stochastic processes}.
\bjournal{Stoch. {P}roc. and their {A}ppl.}
\bvolume{115}
\bpages{249--274}.
\end{barticle}
\endbibitem

\bibitem[\protect\citeauthoryear{Hult and
  Lindskog}{2006a}]{hult:lindskog:2006b}
\begin{barticle}[author]
\bauthor{\bsnm{Hult},~\bfnm{H.}\binits{H.}} \AND
  \bauthor{\bsnm{Lindskog},~\bfnm{F.}\binits{F.}}
(\byear{2006}a).
\btitle{On regular variation for infinitely divisible random vectors and
  additive processes}.
\bjournal{Adv. in Appl. Probab.}
\bvolume{38}
\bpages{134--148}.
\end{barticle}
\endbibitem

\bibitem[\protect\citeauthoryear{Hult and
  Lindskog}{2006b}]{hult:lindskog:2006a}
\begin{barticle}[author]
\bauthor{\bsnm{Hult},~\bfnm{H.}\binits{H.}} \AND
  \bauthor{\bsnm{Lindskog},~\bfnm{F.}\binits{F.}}
(\byear{2006}b).
\btitle{Regular variation for measures on metric spaces}.
\bjournal{Publications de l'Institut Math\'{e}matique, Nouvelle S\'{e}rie}
\bvolume{80}
\bpages{121--140}.
\end{barticle}
\endbibitem

\bibitem[\protect\citeauthoryear{Hult and
  Samorodnitsky}{2008}]{hult:samorodnitsky:2007}
\begin{barticle}[author]
\bauthor{\bsnm{Hult},~\bfnm{Henrik}\binits{H.}} \AND
  \bauthor{\bsnm{Samorodnitsky},~\bfnm{Gennady}\binits{G.}}
(\byear{2008}).
\btitle{{Tail probabilities for infinite series of regularly varying random
  vectors}}.
\bjournal{Bernoulli}
\bvolume{14}
\bpages{838 -- 864}.
\end{barticle}
\endbibitem

\bibitem[\protect\citeauthoryear{Jessen and
  Mikosch}{2006}]{jessen:mikosch:2006}
\begin{barticle}[author]
\bauthor{\bsnm{Jessen},~\bfnm{A.~H.}\binits{A.~H.}} \AND
  \bauthor{\bsnm{Mikosch},~\bfnm{T.}\binits{T.}}
(\byear{2006}).
\btitle{Regularly varying functions}.
\bjournal{Publ. Inst. Math. (Beograd) (N.S.)}
\bvolume{80}
\bpages{171--192}.
\end{barticle}
\endbibitem

\bibitem[\protect\citeauthoryear{Joe}{1997}]{joe:1997}
\begin{bbook}[author]
\bauthor{\bsnm{Joe},~\bfnm{H.}\binits{H.}}
(\byear{1997}).
\btitle{Multivariate models and dependence concepts}.
\bseries{Monographs on Statistics and Applied Probability}
\bvolume{73}.
\bpublisher{Chapman \& Hall}, \baddress{London}.
\end{bbook}
\endbibitem

\bibitem[\protect\citeauthoryear{Kallsen and
  Tankov}{2006}]{Kallsen:Tankov:2006}
\begin{barticle}[author]
\bauthor{\bsnm{Kallsen},~\bfnm{Jan}\binits{J.}} \AND
  \bauthor{\bsnm{Tankov},~\bfnm{Peter}\binits{P.}}
(\byear{2006}).
\btitle{Characterization of dependence of multidimensional Lévy processes
  using Lévy copulas}.
\bjournal{Journal of Multivariate Analysis}
\bvolume{97}
\bpages{1551-1572}.
\end{barticle}
\endbibitem

\bibitem[\protect\citeauthoryear{Kl\"uppelberg and
  Resnick}{2008}]{kluppelberg:resnick:2008}
\begin{barticle}[author]
\bauthor{\bsnm{Kl\"uppelberg},~\bfnm{C.}\binits{C.}} \AND
  \bauthor{\bsnm{Resnick},~\bfnm{S.~I.}\binits{S.~I.}}
(\byear{2008}).
\btitle{The {P}areto Copula, aggregation of risks and the emperor's Socks}.
\bjournal{Journal of Applied Probability}
\bvolume{45(1)}
\bpages{67-84}.
\end{barticle}
\endbibitem

\bibitem[\protect\citeauthoryear{Ledford and Tawn}{1996}]{ledford:tawn:1996}
\begin{barticle}[author]
\bauthor{\bsnm{Ledford},~\bfnm{A.~W.}\binits{A.~W.}} \AND
  \bauthor{\bsnm{Tawn},~\bfnm{J.~A.}\binits{J.~A.}}
(\byear{1996}).
\btitle{Statistics for near independence in multivariate extreme values}.
\bjournal{Biometrika}
\bvolume{83}
\bpages{169--187}.
\end{barticle}
\endbibitem

\bibitem[\protect\citeauthoryear{Lin and Li}{2014}]{lin:li:2014}
\begin{barticle}[author]
\bauthor{\bsnm{Lin},~\bfnm{J.}\binits{J.}} \AND
  \bauthor{\bsnm{Li},~\bfnm{X.}\binits{X.}}
(\byear{2014}).
\btitle{Multivariate Generalized Marshall--Olkin Distributions and Copulas}.
\bjournal{Methodol. Comput. Appl. Probab.}
\bpages{53--78}.
\end{barticle}
\endbibitem

\bibitem[\protect\citeauthoryear{Lindskog}{2004}]{lindskog:2004thesis}
\begin{bphdthesis}[author]
\bauthor{\bsnm{Lindskog},~\bfnm{F.}\binits{F.}}
(\byear{2004}).
\btitle{Multivariate Extremes and Regular Variation for Stochastic Processes},
\btype{Ph.D. Thesis},
\bpublisher{Department of Mathematics, Swiss Federal Institute of Technology}.
\end{bphdthesis}
\endbibitem

\bibitem[\protect\citeauthoryear{Lindskog, Resnick and
  Roy}{2014}]{lindskog:resnick:roy:2014}
\begin{barticle}[author]
\bauthor{\bsnm{Lindskog},~\bfnm{F.}\binits{F.}},
  \bauthor{\bsnm{Resnick},~\bfnm{S.~I.}\binits{S.~I.}} \AND
  \bauthor{\bsnm{Roy},~\bfnm{J.}\binits{J.}}
(\byear{2014}).
\btitle{Regularly varying measures on metric spaces: hidden regular variation
  and hidden jumps}.
\bjournal{Probability Surveys}
\bvolume{11}
\bpages{270--314}.
\end{barticle}
\endbibitem

\bibitem[\protect\citeauthoryear{McNeil, Frey and
  Embrechts}{2005}]{McNeil:Frey:Embrechts}
\begin{bbook}[author]
\bauthor{\bsnm{McNeil},~\bfnm{Alexander~J.}\binits{A.~J.}},
  \bauthor{\bsnm{Frey},~\bfnm{R{\"u}diger}\binits{R.}} \AND
  \bauthor{\bsnm{Embrechts},~\bfnm{Paul}\binits{P.}}
(\byear{2005}).
\btitle{Quantitative Risk Management}.
\bpublisher{Princeton University Press, Princeton}.
\end{bbook}
\endbibitem

\bibitem[\protect\citeauthoryear{Mitra and
  Resnick}{2011}]{mitra:resnick:2011hrv}
\begin{barticle}[author]
\bauthor{\bsnm{Mitra},~\bfnm{A.}\binits{A.}} \AND
  \bauthor{\bsnm{Resnick},~\bfnm{S.~I.}\binits{S.~I.}}
(\byear{2011}).
\btitle{Hidden regular variation and detection of hidden risks}.
\bjournal{Stochastic Models}
\bvolume{27}
\bpages{591--614}.
\end{barticle}
\endbibitem

\bibitem[\protect\citeauthoryear{Nelsen}{1999}]{nelsen:1999}
\begin{bbook}[author]
\bauthor{\bsnm{Nelsen},~\bfnm{R.~B.}\binits{R.~B.}}
(\byear{1999}).
\btitle{An Introduction to Copulas}.
\bpublisher{Springer}.
\end{bbook}
\endbibitem

\bibitem[\protect\citeauthoryear{Omey}{2006}]{Omey:2006}
\begin{barticle}[author]
\bauthor{\bsnm{Omey},~\bfnm{E.~A.~M.}\binits{E.~A.~M.}}
(\byear{2006}).
\btitle{Subexponential distribution functions in {${\bf R}^d$}}.
\bjournal{J. Math. Sci. (N.Y.)}
\bvolume{138}
\bpages{5434--5449}.
\end{barticle}
\endbibitem

\bibitem[\protect\citeauthoryear{Prabhu}{1998}]{prabhu:1998}
\begin{bbook}[author]
\bauthor{\bsnm{Prabhu},~\bfnm{N.~U.}\binits{N.~U.}}
(\byear{1998}).
\btitle{Stochastic Storage Processes: Queues, Insurance risk, Dams and Data
  Communication}.
\bpublisher{Springer}, \baddress{New York}.
\end{bbook}
\endbibitem

\bibitem[\protect\citeauthoryear{Resnick}{1986}]{resnick:1986}
\begin{barticle}[author]
\bauthor{\bsnm{Resnick},~\bfnm{S.~I.}\binits{S.~I.}}
(\byear{1986}).
\btitle{Point processes, regular variation and weak convergence}.
\bjournal{Advances in Applied Probability}
\bvolume{18}
\bpages{66-138}.
\end{barticle}
\endbibitem

\bibitem[\protect\citeauthoryear{Resnick}{2007}]{resnickbook:2007}
\begin{bbook}[author]
\bauthor{\bsnm{Resnick},~\bfnm{S.~I.}\binits{S.~I.}}
(\byear{2007}).
\btitle{Heavy Tail Phenomena: Probabilistic and Statistical Modeling}.
\bseries{Springer Series in Operations Research and Financial Engineering}.
\bpublisher{Springer}, \baddress{New York}.
\end{bbook}
\endbibitem

\bibitem[\protect\citeauthoryear{Samorodnitsky and
  Sun}{2016}]{samorodnitsky:sun:2016}
\begin{barticle}[author]
\bauthor{\bsnm{Samorodnitsky},~\bfnm{G.}\binits{G.}} \AND
  \bauthor{\bsnm{Sun},~\bfnm{J.}\binits{J.}}
(\byear{2016}).
\btitle{Multivariate subexponential distributions and their applications}.
\bjournal{Extremes}
\bvolume{19}
\bpages{171--196}.
\end{barticle}
\endbibitem

\bibitem[\protect\citeauthoryear{Sato}{1991}]{sato:1991}
\begin{barticle}[author]
\bauthor{\bsnm{Sato},~\bfnm{Y.}\binits{Y.}}
(\byear{1991}).
\btitle{Distributions of stable random fields of {C}hentsov type}.
\bjournal{Nagoya Math. J.}
\bvolume{123}
\bpages{119-139}.
\end{barticle}
\endbibitem

\bibitem[\protect\citeauthoryear{Sato}{1999}]{sato:1999}
\begin{bbook}[author]
\bauthor{\bsnm{Sato},~\bfnm{Ken-iti}\binits{K.-i.}}
(\byear{1999}).
\btitle{L\'evy Processes and Infinitely Divisible Distributions}.
\bseries{Cambridge Studies in Advanced Mathematics}
\bvolume{68}.
\bpublisher{Cambridge University Press}, \baddress{Cambridge}.
\bnote{Translated from the 1990 Japanese original, Revised by the author}.
\end{bbook}
\endbibitem

\bibitem[\protect\citeauthoryear{Teugels}{1975}]{teugels:1975}
\begin{barticle}[author]
\bauthor{\bsnm{Teugels},~\bfnm{J.~L.}\binits{J.~L.}}
(\byear{1975}).
\btitle{The class of subexponential distributions}.
\bjournal{Ann. Probab.}
\bvolume{3}
\bpages{1000-1011}.
\end{barticle}
\endbibitem

\bibitem[\protect\citeauthoryear{Willekens}{1986}]{willekens:1986}
\begin{bphdthesis}[author]
\bauthor{\bsnm{Willekens},~\bfnm{E.}\binits{E.}}
(\byear{1986}).
\btitle{Higher order theory for subexponential distributions},
\btype{PhD thesis},
\bpublisher{K.U. Leuven}
\bnote{In Dutch}.
\end{bphdthesis}
\endbibitem

\end{thebibliography}



\appendix

\section{Proofs of the results in Section~\ref{sec:prelim}} \label{sec:proofprelim}

\begin{proof}[Proof of \Cref{lem:RiisPi}]
The set $(1,\infty)^d\in \cR^{(i)},$ and hence, it is non-empty. Now, let $A$ and  $B$ be two arbitrary sets in $\cR^{(i)}$. Then for some $m,n  \ge i$, with $x_{j}>0, j\in \{k_1,\ldots,k_m\}=:S_1 \subseteq \mathbb{I}$ and $y_{j}>0, j\in \{\ell_1,\ldots,\ell_n\}  =:S_2\subseteq \mathbb{I}$ we have
\begin{align*}
A= \left\{\bz\in\R_+^d: z_{j}>x_j\;\forall\, j\in  S_1 \right\}, \quad  B= \left\{\bz\in\R_+^d: z_{j}>y_j\;\forall\, j\in S_2 \right\}.
\end{align*}
For  $j\in S^*:= S_1 \cup S_2$, define $$w_j = \begin{cases} x_j,  & \text{ if } j \in S_1 \cap {S_2}^{c},\\
 y_j & \text{ if } j \in {S_1}^{c} \cap {S_2},\\
  \max\{x_j,y_j\} &\text{ if }  j \in S_1 \cap {S_2}.\end{cases}$$
Thus, $A\cap B = \left\{\bz\in\R_+^d: z_{j}>w_j\;\forall\; j\in  S^* \right\} $ where $|S^*| \ge \max(m,n)\ge i$ and $w_j>0\; \forall\, j \in S^*$. Hence, $A\cap B \in \cR^{(i)}$ and $\cR^{(i)}$ is a $\pi$-system. It can also be checked that  $\sigma(\cR^{(i)})=\cB^{(i)}$.
\end{proof}

\begin{proof}[Proof of \Cref{prop:rectsetsforM}]
\underline{\eqref{Mcon:mut} $\Rightarrow$  \eqref{Mcon:mutAy}:} Using \cite[Theorem 2.1]{lindskog:resnick:roy:2014},  if \eqref{Mcon:mut} holds, then any set of the form $A\in \cR^{(i)}$ with $\mu(\partial A)=0$ is clearly bounded away from $\bCA_i$ and belongs to the $\sigma$-algebra $\cB^{(i)}$ as defined in \eqref{eq:Bi}. Hence, \eqref{Mcon:mutAy} holds as $t\to\infty$.\\
 \underline{\eqref{Mcon:mutAy} $\Rightarrow$  \eqref{Mcon:mut}:}  Now assume  \eqref{Mcon:mutAy}  holds for all $\mu$-continuity sets $A\in \cR^{(i)}$.  Denote by $M$, the collection $$M=\{\mu_t: t>0\}\subset \M(\E_d^{(i)}).$$ For any $r>0$, let $\nu^{(r)}$ be the restriction of   $\nu\in \M(\E_d^{(i)})$ to $\E_d^{(i)}\setminus \bCA_i^{(r)}$ where $\bCA_i^{(r)}=\{\bz\in \R_+^d: d(\bz,\bCA_i)<r\}$. Let $M^{(r)} := \{\nu^{(r)}: \nu \in M\}$ and let $\{r_{\ell}\}$ be a sequence $r_{\ell}\downarrow 0$ as $\ell\to \infty$. Note that $M^{(r)} \subset \M(\E_d^{(i)}\setminus \bCA_i^{(r)})$ which is a class of finite Borel measures.

 Denote by $\mathcal{C}_{i}^{(r)}=$ all real-valued, bounded continuous functions $f$ on ${\E_d^{(i)}}$ which vanishes on $\bCA_i^{(r)}$. Fix $\ell\ge 1$ and pick any $f\in\mathcal{C}_{i}^{(r_{\ell})} $ which is uniformly continuous. Then by definition, the support of $f$ lies on a finite union of rectangular sets  given by $$A= \bigcup_{\genfrac{}{}{0pt}{}{S\subset\mathbb{I}}{|S|=i}} A_S,$$ where $A_S= \{\bz\in \R_+^d: z_j>r^* \;\forall\; j\in S\} \in \cR^{(i)}$ for some $0< r^*< r_{\ell}$. W.l.o.g. the sets $A_S$ can be assumed  to be $\mu$-continuity sets  using \cite[Lemma 2.5]{lindskog:resnick:roy:2014}. Now, using \eqref{Mcon:mutAy} we have convergence on the sets $A_S$ and  therefore,
\begin{align*}
\sup_{\nu\in M^{(r_{\ell})}} \nu(f)= \sup_t \mu_t^{(r_\ell)}(f) & \le \sup\limits_{\bz\in \R_+^d} f(\bz) \sup\limits_{t} \mu_t(A)\\ &\le \sup\limits_{\bz\in \R_+^d} f(\bz) \sum_{\genfrac{}{}{0pt}{}{S\subset\mathbb{I}}{|S|=i}}\sup\limits_{t} \mu_t^{(r_{\ell})}(A_S)<\infty.
\end{align*}
Hence, for any sequence of measures $\{\nu_n\}_{n\ge1} \in M^{(r_{\ell})}$, the sequence $\nu_n(f)$ has a convergent subsequence. Since this is true for any uniformly continuous $f\in \mathcal{C}_{i}^{(r_{\ell})}$, it is true for any   $f\in \mathcal{C}_{i,K}^{(r_{\ell})}$, which are compactly supported functions in $\mathcal{C}_{i}^{(r_{\ell})}$. Since $\mathcal{C}_{i,K}^{(r_{\ell})}$ is separable, using a countable dense collection $\{f_j\}_{j\ge 1}\in \mathcal{C}_{i,K}^{(r_{\ell})}$, and a diagonal argument we can show that any sequence of measures $\{\nu_n\}_{n\ge1} \in M^{(r_{\ell})}$ has a  subsequence $\{\nu_{n_k}\}$ such that $\lim_{n_k\to\infty}\nu_{n_k}(g)=\nu(g)$ for any $g\in \mathcal{C}_{i,K}^{(r_{\ell})}$ and hence for all uniformly continuous functions $f\in \mathcal{C}_{i}^{(r_{\ell})}$ (using a sequence $g_n \to f$ where $g_n\in \mathcal{C}_{i,K}^{(r_{\ell})}$). Thus, $M^{(r_{\ell})}=\{\mu_t^{(r_{\ell})}:t>0\}$ is relatively compact; cf. \citep[(3.16), p. 51)]{resnickbook:2007}; and this holds for a sequence $\{r_{\ell}\}$ where $r_{\ell}\downarrow 0$. Also $M^{(r_{\ell})} \subset M$. Hence, by \cite[Theorem 2.4]{lindskog:resnick:roy:2014} we have $M$ is relatively compact.

 Suppose as $t\to\infty$, $\mu_t$ has two different sequential limits $\mu_1$ and $\mu_2$, then by assumption they clearly agree on all sets $A\in \cR^{(i)}$. By Lemma \ref{lem:RiisPi} such rectangular sets form a  $\pi$-system generating the $\sigma$-algebra $\cB^{(i)}$. Hence, $\mu_1=\mu_2=\mu$ on $\E_d^{(i)}.$
\end{proof}

\begin{proof}[Proof of \Cref{Lemma_aux}]
Let $A= \{\bz \in \R_+^d: z_j>x_j\;\forall\, j\in S\}$ where $S\subseteq \mathbb{I}$, $|S|\ge i$, $x_j>0$ for $j\in S$ and $\mu_i(\partial A)=0$. Furthermore, let $0<\varepsilon<\min_{j\in S}x_j$ and define the sets
\begin{eqnarray*}
    A_{\varepsilon}^+&:=&\{\bz \in \R_+^d: z_j>x_j-\varepsilon \; \forall\; j\in S\},\\
    A_{\varepsilon}^-&:=&\{\bz \in \R_+^d: z_j>x_j+\varepsilon \; \forall\; j\in S\},\\
    N_{\varepsilon}&:=&\{\bz \in \R_+^d: |z_j|\leq \varepsilon \;\forall\; j\in S\}.
\end{eqnarray*}
Suppose w.l.o.g. $\mu_i(\partial A_{\varepsilon}^+)=\mu_i(\partial A_{\varepsilon}^-)=0$ (otherwise choose $\varepsilon$ appropriate).
On the one hand,
\begin{eqnarray*}
    \P(\bX+\bY\in tA)
    &\leq &\P(\bX\in tA_{\varepsilon}^+)
    +\P(\bY\in tN_{\varepsilon}^c)\\
    &\leq &\P(\bX\in tA_{\varepsilon}^+)
    +\P(\|\bY\|_{\infty}>\varepsilon t).
\end{eqnarray*}
Hence,
\begin{eqnarray}
    \limsup_{t\to\infty}b_i^{\leftarrow}(t)
    \P(\bX+\bY\in tA)
    &\leq &\limsup_{t\to\infty}b_i^{\leftarrow}(t)
    \P(\bX\in tA_{\varepsilon}^+)
    +\limsup_{t\to\infty}b_i^{\leftarrow}(t)
    \P(\|\bY\|_{\infty}>\varepsilon t) \nonumber\\
    &\leq& \mu_i(A_{\varepsilon}^+)+
        \limsup_{t\to\infty}b_i^{\leftarrow}(t)(\gamma t)^{-(\alpha_i+\gamma)}\E\|\bY\|^{\alpha_i+\gamma} \nonumber\\
    &=&\mu_i(A_{\varepsilon}^+)\downarrow \mu_i(A) \quad \text{ as } \varepsilon\downarrow 0, \label{eq:lem2.9:sup}
\end{eqnarray}
since $\mu_i(\partial A)=0$. On the other hand,
\begin{eqnarray*}
    \P(\bX+\bY\in tA)
    &\geq &\P(\bX+\bY\in tA, \bY\in tN_{\varepsilon})\\
    &\geq &\P(\bX\in tA_{\varepsilon}^-, \bY\in tN_{\varepsilon})\\
    &=&\P(\bX\in tA_{\varepsilon}^-)\P(\bY\in tN_{\varepsilon}). 
\end{eqnarray*}
Therefore,
\begin{eqnarray}
    \liminf_{t\to\infty}b_i^{\leftarrow}(t)
    \P(\bX+\bY\in tA)
    &\geq& \limsup_{t\to\infty}b_i^{\leftarrow}(t)\,\P(\bX\in tA_{\varepsilon}^-)\P(\bY\in tN_{\varepsilon}) \nonumber\\
    &=& \mu_i( A_{\varepsilon}^-)\uparrow \mu_i(A) \quad \text{ as } \varepsilon\downarrow 0, \label{eq:lem2.9:inf}
\end{eqnarray}
since $\mu_i(\partial A)=0$. Thus \eqref{eq:lem2.9:sup} and \eqref{eq:lem2.9:inf} imply that
\begin{eqnarray*}
    \lim_{t\to\infty}b_i^{\leftarrow}(t)
    \P(\bX+\bY\in tA)=\mu_i(A),
\end{eqnarray*}
and using \Cref{prop:rectsetsforM} we can conclude the statement.
\end{proof}

\section{Proof of Theorem~\ref{mainTheorem}} \label{sec:proofmaintheorem}

The following auxiliary lemmas are used to prove \Cref{mainTheorem}.

\begin{lemma} \label{Lemma 4.2}
Let the assumptions of \Cref{mainTheorem} hold. Then for any
$m=0,\ldots,i-1$:
\begin{align}
    \lim_{t\to\infty}b_{I(i)}^{(1)\leftarrow}(t)b_{i-I(i)}^{(2)\leftarrow}(t)\,\P(Z_{(m+1)}^{(1)}>t)\,\P(Z_{(i-m)}^{(2)}>t)=& 0 \label{appeq:B1}\\ \intertext{ and }
   { \lim_{t\to\infty}b_{I(i)}^{(1)\leftarrow}(t)b_{i-I(i)}^{(2)\leftarrow}(t)\,\P(Z_{(m+1)}^{(2)}>t)\,\P(Z_{(i-m)}^{(1)}>t)}=& 0. \label{appeq:B2}
\end{align}
\end{lemma}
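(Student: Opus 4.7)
The plan is to reduce each of the two limits to the vanishing of a pure ratio of normalizing functions $b_j^{(k)\leftarrow}$, and then to invoke the dichotomy built into the hypothesis of \Cref{mainTheorem}.

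First, applying the adapted-MRV property of $\bZ^{(k)}$ to the rectangular set $\{\bz:z_{(j)}>1\}\in\cR^{(j)}$ (using \Cref{prop:rectsetsforM}) shows that $b_j^{(k)\leftarrow}(t)\,\P(Z_{(j)}^{(k)}>t)$ converges to a finite constant for every $j=1,\ldots,d$ and $k=1,2$: a positive constant $\mu_j^{(k)}(\{\bz:z_{(j)}>1\})$ when $j\le\Delta_k$, and $0$ when $j>\Delta_k$ (by the null-convergence condition). In particular, this quantity is uniformly bounded in $t$, so for a finite constant $C>0$ and all $t$ sufficiently large,
\begin{equation*}
b_{I(i)}^{(1)\leftarrow}(t)\,b_{i-I(i)}^{(2)\leftarrow}(t)\,\P(Z_{(m+1)}^{(1)}>t)\,\P(Z_{(i-m)}^{(2)}>t)\le C\cdot\frac{b_{I(i)}^{(1)\leftarrow}(t)\,b_{i-I(i)}^{(2)\leftarrow}(t)}{b_{m+1}^{(1)\leftarrow}(t)\,b_{i-m}^{(2)\leftarrow}(t)}.
\end{equation*}
It therefore suffices to prove that the ratio on the right tends to $0$ for every $m=0,\ldots,i-1$.

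Second, I would factor the ratio as
\begin{equation*}
\frac{b_{I(i)}^{(1)\leftarrow}(t)\,b_{i-I(i)}^{(2)\leftarrow}(t)}{b_{m}^{(1)\leftarrow}(t)\,b_{i-m}^{(2)\leftarrow}(t)}\cdot\frac{b_m^{(1)\leftarrow}(t)}{b_{m+1}^{(1)\leftarrow}(t)}.
\end{equation*}
The first factor converges to $c_m^{I(i)}$, which is finite for every $m=0,\ldots,i$ because $I(i)$ is chosen only from indices $j$ with $\bar{c}_j^{(i)}<\infty$. For $m=0$ the second factor equals $1/b_1^{(1)\leftarrow}(t)$ and vanishes since $b_1^{(1)\leftarrow}\in\RV_{\alpha_1^{(1)}}$ diverges. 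For $m\in\{1,\ldots,i-1\}$ the hypothesis of \Cref{mainTheorem} forces either $c_m^{I(i)}=0$ or $b_m^{(1)\leftarrow}(t)/b_{m+1}^{(1)\leftarrow}(t)\to 0$, annihilating one of the two factors. This proves \eqref{appeq:B1}.

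Third, for \eqref{appeq:B2} the same bounding step reduces the task to showing $b_{I(i)}^{(1)\leftarrow}(t)b_{i-I(i)}^{(2)\leftarrow}(t)/\bigl(b_{m+1}^{(2)\leftarrow}(t)\,b_{i-m}^{(1)\leftarrow}(t)\bigr)\to 0$. I would use the symmetric factorization
\begin{equation*}
\frac{b_{I(i)}^{(1)\leftarrow}(t)\,b_{i-I(i)}^{(2)\leftarrow}(t)}{b_{i-m-1}^{(1)\leftarrow}(t)\,b_{m+1}^{(2)\leftarrow}(t)}\cdot\frac{b_{i-m-1}^{(1)\leftarrow}(t)}{b_{i-m}^{(1)\leftarrow}(t)},
\end{equation*}
whose first factor tends to $c_{i-m-1}^{I(i)}$. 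The substitution $m'=i-m-1\in\{0,\ldots,i-1\}$ reduces the analysis to the one carried out for \eqref{appeq:B1}: for $m'=0$ the second factor is $1/b_1^{(1)\leftarrow}(t)\to 0$, and for $m'\in\{1,\ldots,i-1\}$ the hypothesis gives the dichotomy again. The only mildly delicate point is the index bookkeeping — choosing the factorization so that the pair $(c_{m'}^{I(i)},\,b_{m'}^{(1)\leftarrow}/b_{m'+1}^{(1)\leftarrow})$ appears at the \emph{same} index $m'\in\{1,\ldots,i-1\}$ to which the theorem's hypothesis speaks. Once that alignment is achieved, the proof is mechanical.
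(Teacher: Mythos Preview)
Your proof is correct and follows essentially the same route as the paper's: reduce to showing the pure ratio $b_{I(i)}^{(1)\leftarrow}(t)b_{i-I(i)}^{(2)\leftarrow}(t)/\bigl(b_{m+1}^{(1)\leftarrow}(t)b_{i-m}^{(2)\leftarrow}(t)\bigr)$ vanishes, then factor out $c_m^{I(i)}$ and invoke the dichotomy in the hypothesis. One minor quibble: the set $\{\bz:z_{(j)}>1\}$ is not itself in $\cR^{(j)}$ but a finite union of such sets, so your first reduction step should appeal to that union rather than a single rectangular set; this does not affect the argument. For \eqref{appeq:B2} the paper simply declares the proof ``analogous'' (implicitly swapping the roles of the superscripts and using the $k=2$ part of the hypothesis), whereas your substitution $m'=i-m-1$ routes everything through $c_{m'}^{I(i)}$ and the $k=1$ ratio again---a slightly different but equally valid bookkeeping that you have aligned correctly with the index range in the hypothesis.
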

\begin{proof}

Let $\Gamma^{(k)}:=\arg\max_i\{\alpha_i^{(k)}<\infty\}$, $k=1, 2$. By definition, for $i\le \Gamma^{(k)}$, we have $\P(Z_{(i)}^{(k)}>t) =O( 1/b_i^{(k)\leftarrow}(t))$; and for $i>\Gamma^{(k)}, \P(Z_{(i)}^{(k)}>t) = o(1/b_i^{(k)\leftarrow}(t))$. Hence, to prove \eqref{appeq:B1}, we need only to show that 
\beao
    a_m:=\limsup_{t\to\infty}\frac{b_{I(i)}^{(1)\leftarrow}(t)b_{i-I(i)}^{(2)\leftarrow}(t)}{b_{m+1}^{(1)\leftarrow}(t)b_{i-m}^{(2)\leftarrow}(t)}=0, \quad m=0,\ldots,i-1.
\eeao
For $m=0$ we have
$$a_0=c_0^{I(i)}\lim_{t\to\infty}1/b^{(1)\leftarrow}_1(t)=0.$$
Let $ m\in\{1,\ldots,i-1\}$.
Note that
\beao
        a_{m}
        =c_{m}^{I(i)}\limsup_{t\to\infty}\frac{b_{m}^{(1)\leftarrow}(t)}{b_{m+1}^{(1)\leftarrow}(t)}.
\eeao
Since $\limsup_{t\to\infty}b_{m}^{(1)\leftarrow}(t)/b_{m+1}^{(1)\leftarrow}(t)<\infty$, the last equality implies that  $a_m=0$ is only possible if either $c_m^{I(i)}=0$ or $\limsup_{t\to\infty}b_{m}^{(1)\leftarrow}(t)/b_{m+1}^{(1)\leftarrow}(t)=0$, which holds true by the assumptions in \Cref{mainTheorem}.
The proof of \eqref{appeq:B2} is analogous.
\end{proof}

\begin{lemma} \label{Lemma 3.4}
Let the assumptions of \Cref{mainTheorem} hold and
%
 $A= \{\bz \in \R_+^d: z_j>x_j \;\forall\; j\in S\}$ be a rectangular set with
 $S\subseteq \mathbb{I}$, $|S|= i$, $x_{j}>0$ for $j\in S$ and $\mu_i^{\oplus}(\partial A)=0$. Then
\beam\label{eq:lemma2}
   \lim_{t\to\infty}b_{I(i)}^{(1)\leftarrow}(t)b_{i-I(i)}^{(2)\leftarrow}(t) \P\left(\bZ^{(1)}+\bZ^{(2)}\in tA\right)
        =\mu_i^{\oplus}(A).
\eeam
\end{lemma}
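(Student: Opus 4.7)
The plan is to decompose the event $\{\bZ^{(1)}+\bZ^{(2)}\in tA\}$ according to which subset $J\subseteq S$ of coordinates has its large contribution coming from $\bZ^{(1)}$, with the remaining coordinates in $S\setminus J$ dominated by $\bZ^{(2)}$. This decomposition mirrors the combinatorial structure of $\mu_i^{\oplus}=\sum_{m=0}^i c_m^{I(i)}\mu_{m,i}^*$ in \Cref{mainTheorem}. The argument proceeds by sandwiching $\P(\bZ^{(1)}+\bZ^{(2)}\in tA)$ between matching upper and lower bounds, with the error terms killed by \Cref{Lemma 4.2}.

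Fix a small $\varepsilon>0$ with $2\varepsilon<\min_{j\in S}x_j$. For each $J\subseteq S$, introduce the event
$$E_J(\varepsilon,t):=\bigcap_{j\in J}\{Z_j^{(1)}>t\varepsilon\}\cap\bigcap_{j\in S\setminus J}\{Z_j^{(2)}>t\varepsilon\}\cap\{\bZ^{(1)}+\bZ^{(2)}\in tA\}.$$
Since $Z_j^{(1)}\le t\varepsilon$ and $Z_j^{(2)}\le t\varepsilon$ together preclude $Z_j^{(1)}+Z_j^{(2)}>tx_j$, the family $\{E_J(\varepsilon,t):J\subseteq S\}$ covers $\{\bZ^{(1)}+\bZ^{(2)}\in tA\}$. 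For the upper bound I would replace the sum constraint on $j\in J$ by $Z_j^{(1)}>t(x_j-\varepsilon)$ (using $Z_j^{(2)}\ge 0$) and symmetrically on $S\setminus J$, giving a product of rectangular-set probabilities in $\bZ^{(1)}$ and $\bZ^{(2)}$. For the lower bound I would use pairwise disjoint events, requiring $Z_j^{(1)}>t(x_j+\varepsilon)$ for $j\in J$, $Z_j^{(2)}>t(x_j+\varepsilon)$ for $j\in S\setminus J$, and forcing the complementary component in each coordinate to lie in $[0,t\varepsilon]$ so the sum still exceeds $tx_j$.

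By independence of $\bZ^{(1)}$ and $\bZ^{(2)}$, each bound factors as a product $\P(\bZ^{(1)}\in tA_{J,\varepsilon}^{(1)})\,\P(\bZ^{(2)}\in tA_{J,\varepsilon}^{(2)})$ with $A^{(1)}_{J,\varepsilon}\in\cR^{(m)}$ and $A^{(2)}_{J,\varepsilon}\in\cR^{(i-m)}$, where $m=|J|$. Invoking the adapted-MRV hypothesis for $\bZ^{(k)}$ on the appropriate subcones, the two factors scale as $(b_m^{(1)\leftarrow}(t))^{-1}\mu_m^{(1)}(A^{(1)}_{J,\varepsilon})$ and $(b_{i-m}^{(2)\leftarrow}(t))^{-1}\mu_{i-m}^{(2)}(A^{(2)}_{J,\varepsilon})$. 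Multiplying by the normalizing factor $b_{I(i)}^{(1)\leftarrow}(t)b_{i-I(i)}^{(2)\leftarrow}(t)$ produces exactly the prefactor $c_m^{I(i)}$ in the limit, by definition of $I(i)$. Summing over $J\subseteq S$ of size $m$ and then over $m=0,\ldots,i$ reassembles $\sum_m c_m^{I(i)}\mu_{m,i}^*(A)=\mu_i^{\oplus}(A)$, where the boundary cases $m=0$ and $m=i$ use the conventions $\mu_0^{(k)}\equiv 1$ and $b_0^{(k)\leftarrow}\equiv 1$.

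The main obstacle is controlling the cross-terms produced by both sandwich bounds: in the upper bound because the rectangular extensions for different $J$ overlap, and in the lower bound because the ``complementary component small'' restriction must be removed. After normalization, these errors reduce to expressions of the form $b_{I(i)}^{(1)\leftarrow}(t)b_{i-I(i)}^{(2)\leftarrow}(t)\,\P(Z_{(m+1)}^{(1)}>\delta t)\,\P(Z_{(i-m)}^{(2)}>\delta t)$ and their symmetric counterparts, which \Cref{Lemma 4.2} drives to zero for each $\delta>0$. Since $\mu_i^{\oplus}(\partial A)=0$, I can additionally choose $\varepsilon$ so that each perturbed rectangular set is a continuity set of the relevant $\mu_m^{(k)}$; taking $\limsup$ and $\liminf$ in $t$ and then letting $\varepsilon\downarrow 0$ closes the matching bounds at $\mu_i^{\oplus}(A)$ and establishes \eqref{eq:lemma2}.
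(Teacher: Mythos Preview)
Your overall strategy --- decompose by which coordinates get their large contribution from $\bZ^{(1)}$ versus $\bZ^{(2)}$, sandwich, kill the error terms with \Cref{Lemma 4.2}, then let $\varepsilon\downarrow 0$ --- is exactly the paper's, and your lower bound (disjoint events with the ``complementary component small'' restriction, then remove the restriction at the cost of terms of order $\P(Z_{(m+1)}^{(1)}>\delta t)\P(Z_{(i-m)}^{(2)}>\delta t)$) is correct.

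The gap is in your upper bound. On $E_J(\varepsilon,t)$ you only know $Z_j^{(1)}>t\varepsilon$ for $j\in J$, $Z_j^{(2)}>t\varepsilon$ for $j\in S\setminus J$, and the sum constraint. From this you \emph{cannot} conclude $Z_j^{(1)}>t(x_j-\varepsilon)$ for $j\in J$: that inference needs $Z_j^{(2)}\le t\varepsilon$, which $E_J$ does not supply, and ``using $Z_j^{(2)}\ge 0$'' points the inequality the wrong way. If instead you drop to the crude bound $\P(Z_j^{(1)}>t\varepsilon,\,j\in J)\,\P(Z_j^{(2)}>t\varepsilon,\,j\in S\setminus J)$, the limit involves $\varepsilon^{-\alpha_m^{(1)}-\alpha_{i-m}^{(2)}}$ and blows up as $\varepsilon\downarrow 0$, so the sandwich does not close. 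Nor are the overlaps of the $E_J$ the real issue: over-counting is harmless for an upper bound.

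What the paper does --- and what fixes your argument --- is a finer decomposition that tracks, for every $j\in S$ and each $k=1,2$, whether $Z_j^{(k)}\le t\varepsilon x_j$. On the ``main'' cells, where for each $j$ exactly one of $Z_j^{(1)},Z_j^{(2)}$ is small, the small component forces the other above $t(1-\varepsilon)x_j$ via the sum constraint, and you recover the product of rectangular probabilities with the correct thresholds. On the remaining cells some coordinate $j_0$ has \emph{both} $Z_{j_0}^{(1)}$ and $Z_{j_0}^{(2)}$ above $t\varepsilon x_{j_0}$; this supplies one extra large coordinate in one of the vectors, so the cell is bounded by a term of the form $\P(Z_{(m+1)}^{(1)}>ct)\,\P(Z_{(i-m)}^{(2)}>ct)$, which \Cref{Lemma 4.2} drives to zero. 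In effect you need the ``complementary component small'' restriction in the upper bound too, not only in the lower bound.
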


\begin{proof} $\mbox{}$\\
 \textbf{Step 1.} First, we derive an upper bound for the left hand side of \eqref{eq:lemma2}.  Let $0<\epsilon<1$
 such that $\epsilon x_j<(1-\epsilon)x_l$ for all $j,l\in S$.
 Then
\beam \label{A6}
    \P\left(\bZ^{(1)}+\bZ^{(2)}\in tA\right)&=&\Bigg[\sum_{\genfrac{}{}{0pt}{}{J_2\subseteq S\cup \{\emptyset\}}{J_1=S\backslash J_2}}+\sum_{\genfrac{}{}{0pt}{}{J_2\subseteq  S\cup\{\emptyset\}}{J_1 \subsetneq S\backslash J_2\cup \{\emptyset\}}}\Bigg]\P\left(\bigcap_{j\in S}\left\{Z_j^{(1)}+Z_j^{(2)}>tx_j\right\}
                 \right. \nonumber\\
        &&\quad\quad \cap\bigcap_{j\in J_1}\left\{Z_j^{(1)}\leq t\epsilon x_j\right\}\cap\bigcap_{j\in S\backslash J_1}\left\{Z_j^{(1)}> {t (1-\epsilon)} x_j\right\} \nonumber\\
        &&\quad\quad\left. \cap\bigcap_{j\in J_2}\left\{Z_j^{(2)}\leq t\epsilon x_j\right\}\cap\bigcap_{j\in S\backslash J_2}\left\{Z_j^{(2)}> {t(1-\epsilon)} x_j\right\} \right) \nonumber\\
        &&=:M_{1}(t,\epsilon)+M_{2}(t,\epsilon).
\eeam
Note that in case $J_1\cap J_2 \not=\emptyset$ these probabilities are zero since it results in computing probabilities of empty sets.
Next, we  find an upper bound for $M_{1}(t,\epsilon)$. Note that
\beao
    M_{1}(t,\epsilon)&\leq&\sum_{\genfrac{}{}{0pt}{}{J_2\subseteq S \cup\{\emptyset\}}{J_1=S\backslash J_2}}
        \P\left(\bigcap_{j\in S\setminus J_1}\left\{Z_j^{(1)}> t(1-\epsilon) x_j\right\}\cap\bigcap_{j\in S \setminus J_2}\left\{Z_j^{(2)}> t(1-\epsilon) x_j\right\}\right)\\
        &=&\sum_{J_2\subseteq S\cup\{\emptyset\} }
       \P\left(\bigcap_{j\in J_2}\left\{Z_j^{(1)}> t(1-\epsilon) x_j\right\}\right)
        \P\left(\bigcap_{j\in S\backslash J_2}\left\{Z_j^{(2)}> t(1-\epsilon) x_j\right\}\right).
\eeao
Let $A_{\epsilon}:= \{\bz \in \R_+^d: z_j>(1-\epsilon)x_j \,\forall\, j\in S\}$ and choose $\epsilon>0$ such that $\mu_i^{\oplus}(\partial A_{\epsilon})=0$.
Then
\beam \label{A5}
    \limsup_{t\to\infty}b_{I(i)}^{(1)\leftarrow}(t)b_{i-I(i)}^{(2)\leftarrow}(t)M_{1}(t,\epsilon)
    \leq \mu_i^{\oplus}(A_{\epsilon}). 
\eeam
Define $x^*:=\min_{j\in S} x_j$. Following a similar argument for $M_{2}(t,\epsilon)$ we get the upper bound
\beao
    M_{2}(t,\epsilon)&\leq& \sum_{\genfrac{}{}{0pt}{}{J_2\subseteq S\cup\{\emptyset\}}{ J_1\subsetneq S\backslash J_2\cup\{\emptyset\}}}
        \P\Big(\bigcap_{j\in J_2\cup S\backslash J_1}\left\{Z_j^{(1)}> t(1-\epsilon) x_j\right\}\Big)\P\Big(\bigcap_{j\in  J_1\cup S\backslash J_2}\left\{Z_j^{(2)}> t(1-\epsilon) x_j\right\}\Big)\\
    &\leq& \sum_{\genfrac{}{}{0pt}{}{J_2\subseteq S\cup\{\emptyset\}}{ J_1\subsetneq S\backslash J_2\cup\{\emptyset\}}}
        \P\left(Z_{(|J_2\cup S\backslash (J_1\cup J_2)|)}^{(1)}> t(1-\epsilon) x^*\right)\,\P\left(Z_{(|S\backslash J_2|)}^{(2)}> t(1-\epsilon) x^*\right)\\
    &\leq &\sum_{m=0}^{i-1}\sum_{l=1}^{i-m}\sum_{\genfrac{}{}{0pt}{}{J_2\subseteq S\cup\{\emptyset\} }{ |J_2|=m }}\sum_{\genfrac{}{}{0pt}{}{J_1\subsetneq S\backslash J_2\cup\{\emptyset\}}{|S\backslash ( J_1 \cup J_2)|=l}}
        \P\left(Z_{(m+l)}^{(1)}> t(1-\epsilon) x^*\right)\,\P\left(Z_{(i-m)}^{(2)}> t(1-\epsilon) x^*\right).
\eeao
Finally, an application of \Cref{Lemma 4.2} yields
\beam
    \lefteqn{\limsup_{t\to\infty}b_{I(i)}^{(1)\leftarrow}(t)b_{i-I(i)}^{(2)\leftarrow}(t) M_{2}(t,\epsilon)}\nonumber\\
    && \quad\quad\leq \sum_{m=0}^{i-1}\sum_{l=1}^{i-m}\sum_{\genfrac{}{}{0pt}{}{J_2\subseteq S\cup\{\emptyset\}}{ |J_2|=m }}\sum_{\genfrac{}{}{0pt}{}{J_1\subsetneq S\backslash J_2 \cup\{\emptyset\}}{|S\backslash ( J_1 \cup J_2)|=l}}{\limsup_{t\to\infty} }\, b_{I(i)}^{(1) \leftarrow}(t)b_{i-I(i)}^{(2)\leftarrow}(t) \nonumber\\
    && \quad\quad\quad\quad\quad \P\left(Z_{(m+1)}^{(1)}> t(1-\epsilon) x^*\right)\,\P\left(Z_{(i-m)}^{(2)}> t(1-\epsilon) x^*\right)  =0. \label{A4}
\eeam
Now from \eqref{A6}, \eqref{A5} and \eqref{A4} we have
\beao
     \limsup_{t\to\infty}b_{I(i)}^{(1)\leftarrow}(t)b_{i-I(i)}^{(2)\leftarrow}(t)\,\P\left(\bZ^{(1)}+\bZ^{(2)}\in tA\right)
        \leq \mu_i^{\oplus}(A_{\epsilon})\downarrow \mu_i^{\oplus}(A) \quad \text{ as }\epsilon\downarrow 0, 
\eeao
where in the last step we use the fact that $\mu_i^{\oplus}(\partial A)=0$.

 \textbf{Step 2.} Next, we derive a lower bound for the asymptotic limit.
There are a total of $2^{|S|}$ subsets of $S$ which we order as
$J(1),\ldots, J(2^{|S|})$ (in any way). Now, define the sets
\beao
    C_{l}:=\bigcap_{j\in J(l)}\left\{Z_j^{(1)}> t x_j\right\}\cap \bigcap_{j\in S\backslash J(l)}\left\{Z_j^{(2)}> t x_j\right\}, \quad
        l=1,\ldots,2^{|S|}.
\eeao
Then,
\beao
    \left\{\bZ^{(1)}+\bZ^{(2)}\in tA\right\}\supseteq\bigcup_{l=1}^{2^{|S|}}C_l,
\eeao
and using the inclusion-exclusion principle we have
\beam \label{A1}
    \P\left(\bZ^{(1)}+\bZ^{(2)}\in tA\right)\geq \P\left(\bigcup_{l=1}^{2^{|S|}}C_l\right)
        \geq \sum_{l=1}^{2^{|S|}}\P(C_l)-\sum_{1\leq l_1<l_2\leq 2^{|S|}}\P(C_{l_1}\cap C_{l_2}).
\eeam
Now on one hand,
\beam \label{A2}
    \lim_{t\to\infty}b_{I(i)}^{(1)\leftarrow}(t)b_{i-I(i)}^{(2)\leftarrow}(t)\sum_{l=1}^{2^{|S|}}\P(C_l)=\mu_i^{\oplus}(A),
\eeam
and on the other hand, for any $1\leq l_1<l_2\leq 2^{|S|}$ the inequality
\beao
    0\leq \P(C_{l_1}\cap C_{l_2})\leq \P(Z_{(|J(l_1)\cup J(l_2)|)}^{(1)}>t x^*)\P(Z_{(|S\backslash (J(l_1)\cap J(l_2))|) }^{(2)}>tx^*)
\eeao
holds.  Define $m:=|J(l_1)\cap J(l_2)|$. Since $J(l_1)\not=J(l_2)$ we have $|J(l_1)\cup J(l_2)|\geq |J(l_1)\cap J(l_2)|+1=m+1$.
Hence, a conclusion of \Cref{Lemma 4.2} is that
\beam \label{A3}
    \lefteqn{\limsup_{t\to\infty}b_{I(i)}^{(1)\leftarrow}(t)b_{i-I(i)}^{(2)\leftarrow}(t)\,\P(C_{l_1}\cap C_{l_2})}  \\
        &&\leq \limsup_{t\to\infty}b_{I(i)}^{(1)\leftarrow}(t)b_{i-I(i)}^{(2)\leftarrow}(t)
         \P(Z_{(m+1)}^{(1)}>t x^*)\P(Z_{(i-m) }^{(2)}>tx^*) =0 \nonumber
\eeam
for any $1\leq l_1<l_2\leq 2^{|S|}$. Then \eqref{A1}, \eqref{A2} and \eqref{A3} result in the lower bound
\beao
    \liminf_{t\to\infty}b_{I(i)}^{(1)\leftarrow}(t)b_{i-I(i)}^{(2)\leftarrow}(t)\,\P\left(\bZ^{(1)}+\bZ^{(2)}\in tA\right)
    \geq \mu_i^{\oplus}(A).
\eeao
and together with the upper bound in Step~1 the lemma is proven.
\end{proof}

\begin{lemma} \label{Lemma B.3}
Let the assumptions of \Cref{mainTheorem} hold and
 $A= \{\bz \in \R_+^d: z_j>x_j \;\forall\; j\in S\}$ be a rectangular set with
 $S\subseteq \mathbb{I}$, $|S|>i$ and  $x_{j}>0$ for $j\in S$ {where $\mu_i^{\oplus}(\partial A)=0$.} Then
 \beao
    \mu_i^{\oplus}(A)=c_i^{I(i)}\mu^{(1)}_{i}(A)
        +c_0^{I(i)}\mu^{(2)}_{i}(A).
 \eeao
\end{lemma}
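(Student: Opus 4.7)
The plan is to compute the limit
\begin{equation*}
\mu_i^{\oplus}(A)=\lim_{t\to\infty} b_{I(i)}^{(1)\leftarrow}(t)\,b_{i-I(i)}^{(2)\leftarrow}(t)\,\P(\bZ^{(1)}+\bZ^{(2)}\in tA),
\end{equation*}
which exists since $A\in\cR^{(i)}$ and $\mu_i^{\oplus}(\partial A)=0$, by adapting the decomposition used in the proof of \Cref{Lemma 3.4} and exploiting $|S|>i$ to kill every genuinely mixed summand, leaving only the two pure ones.

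Fix $\epsilon>0$ small enough that $\epsilon x_j<(1-\epsilon)x_\ell$ for all $j,\ell\in S$, and write $\P(\bZ^{(1)}+\bZ^{(2)}\in tA)=M_1(t,\epsilon)+M_2(t,\epsilon)$ exactly as in \eqref{A6}. The normalized $M_2$ vanishes by the same argument as in Step~1 of the proof of \Cref{Lemma 3.4}, which rests on \Cref{Lemma 4.2} and does not use $|S|=i$ anywhere. For $M_1=\sum_{J_2\subseteq S}T_{J_2}(t,\epsilon)$, I split the sum by $m:=|J_2|\in\{0,\dots,|S|\}$. The two pure contributions $J_2=\emptyset$ (all components of $\bZ^{(1)}$ large) and $J_2=S$ (all components of $\bZ^{(2)}$ large) factorise by independence, and since the complementary smallness probabilities tend to $1$, they are asymptotic to $\P(\bZ^{(1)}\in t(1-\epsilon)A)$ and $\P(\bZ^{(2)}\in t(1-\epsilon)A)$, respectively. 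Multiplying by $b_{I(i)}^{(1)\leftarrow}(t)\,b_{i-I(i)}^{(2)\leftarrow}(t)$, invoking the MRV of $\bZ^{(k)}$ on $\E_d^{(i)}$ and the definitions $c_i^{I(i)}=\lim_{t\to\infty}b_{I(i)}^{(1)\leftarrow}(t)b_{i-I(i)}^{(2)\leftarrow}(t)/b_i^{(1)\leftarrow}(t)$ and $c_0^{I(i)}=\lim_{t\to\infty}b_{I(i)}^{(1)\leftarrow}(t)b_{i-I(i)}^{(2)\leftarrow}(t)/b_i^{(2)\leftarrow}(t)$, these limits equal $c_i^{I(i)}\mu_i^{(1)}((1-\epsilon)A)$ and $c_0^{I(i)}\mu_i^{(2)}((1-\epsilon)A)$; letting $\epsilon\downarrow 0$ and using the scaling of $\mu_i^{(k)}$ gives $c_i^{I(i)}\mu_i^{(1)}(A)+c_0^{I(i)}\mu_i^{(2)}(A)$.

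The main obstacle is showing that every genuinely mixed term $T_{J_2}$ with $0<m<|S|$ vanishes after normalization. Using independence and dropping the smallness constraints yields the crude bound $T_{J_2}(t,\epsilon)\le\P(Z_{(|S|-m)}^{(1)}>(1-\epsilon)tx^{*})\,\P(Z_{(m)}^{(2)}>(1-\epsilon)tx^{*})$ with $x^{*}=\min_{j\in S}x_j$. Since $|S|>i$, whenever $m\le i$ we have $|S|-m\ge i-m+1$, so $\P(Z_{(|S|-m)}^{(1)}>\cdot)\le\P(Z_{(i-m+1)}^{(1)}>\cdot)$ and the normalized product tends to zero by \Cref{Lemma 4.2}; the symmetric inequality handles every $m$ with $|S|-m\le i$. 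The remaining indices $m$ with $\min(m,|S|-m)>i$ (which forces $|S|\ge 2i+2$) are treated via the standing dichotomy of \Cref{mainTheorem}: iterated across intermediate levels, the assumptions force $b_{I(i)}^{(1)\leftarrow}(t)\,b_{i-I(i)}^{(2)\leftarrow}(t)/[b_m^{(1)\leftarrow}(t)b_{|S|-m}^{(2)\leftarrow}(t)]\to 0$, which together with the Potter bound $\P(Z_{(m)}^{(k)}>t)=O(1/b_m^{(k)\leftarrow}(t))$ completes the vanishing.

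For the matching lower bound, I emulate Step~2 of the proof of \Cref{Lemma 3.4}, but keep only the two events $C_S:=\bigcap_{j\in S}\{Z_j^{(1)}>tx_j\}$ and $C_{\emptyset}:=\bigcap_{j\in S}\{Z_j^{(2)}>tx_j\}$. Inclusion--exclusion gives $\P(\bZ^{(1)}+\bZ^{(2)}\in tA)\ge\P(C_S)+\P(C_{\emptyset})-\P(C_S\cap C_{\emptyset})$; the first two summands yield $c_i^{I(i)}\mu_i^{(1)}(A)$ and $c_0^{I(i)}\mu_i^{(2)}(A)$ after normalization, while $\P(C_S\cap C_{\emptyset})=\P(C_S)\P(C_{\emptyset})$ is bounded above by $\P(Z_{(|S|)}^{(1)}>tx^{*})\P(Z_{(|S|)}^{(2)}>tx^{*})$ with $|S|>i$, and an application of \Cref{Lemma 4.2} (combined with the monotonicity $Z_{(|S|)}\le Z_{(i+1)}$) shows that its normalized contribution vanishes. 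Matching upper and lower bounds then gives $\mu_i^{\oplus}(A)=c_i^{I(i)}\mu_i^{(1)}(A)+c_0^{I(i)}\mu_i^{(2)}(A)$.
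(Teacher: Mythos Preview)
Your argument is circular. The opening identity
\[
\mu_i^{\oplus}(A)=\lim_{t\to\infty} b_{I(i)}^{(1)\leftarrow}(t)\,b_{i-I(i)}^{(2)\leftarrow}(t)\,\P\bigl(\bZ^{(1)}+\bZ^{(2)}\in tA\bigr)
\]
is precisely the conclusion of \Cref{mainTheorem} for this set $A$, and \Cref{Lemma B.3} is an auxiliary step used in \emph{proving} that theorem. At this stage, $\mu_i^{\oplus}(A)$ is nothing more than the \emph{defined} quantity $\sum_{m=0}^i c_m^{I(i)}\mu_{m,i}^*(A)$; your Portmanteau-style justification (``since $A\in\cR^{(i)}$ and $\mu_i^{\oplus}(\partial A)=0$'') presupposes the very $\M$-convergence the theorem is establishing. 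What you actually carried out --- the $M_1/M_2$ decomposition showing that the probabilistic limit equals $c_i^{I(i)}\mu_i^{(1)}(A)+c_0^{I(i)}\mu_i^{(2)}(A)$ --- is the ``elaborate calculation'' that the proof of \Cref{mainTheorem} invokes \emph{alongside} \Cref{Lemma B.3}, not the lemma itself. (Even there, your claim that the $M_2$ bound ``does not use $|S|=i$ anywhere'' is not quite right: the final indices in the bound of \Cref{Lemma 3.4} are $i-m$ precisely because $|S|=i$, and for $|S|>i$ the extra range of $m$ needs the same additional work you sketch for $M_1$.)

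The paper's proof is a direct algebraic verification: starting from the definition $\mu_i^{\oplus}(A)=\sum_{m=0}^i c_m^{I(i)}\mu_{m,i}^*(A)$, it shows $\sum_{m=1}^{i-1} c_m^{I(i)}\mu_{m,i}^*(A)=0$. For each $m\in\{1,\dots,i-1\}$ and $J\subseteq S$ with $|J|=m$, the key observation is $|S\setminus J|=|S|-m\ge i-m+1$ (this is where $|S|>i$ enters). Rewriting $c_m^{I(i)}\mu_{m,i}^*(A)$ back as a limit of products of tail probabilities then yields the upper bound $\P\bigl(Z_{((m-1)+1)}^{(1)}>\cdot\bigr)\,\P\bigl(Z_{(i-(m-1))}^{(2)}>\cdot\bigr)$ times the normalization, which vanishes by \Cref{Lemma 4.2}. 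The endpoints $m=0$ and $m=i$ survive and give $c_0^{I(i)}\mu_i^{(2)}(A)$ and $c_i^{I(i)}\mu_i^{(1)}(A)$, since by definition $\mu_{0,i}^*(A)=\mu_i^{(2)}(A)$ and $\mu_{i,i}^*(A)=\mu_i^{(1)}(A)$ on rectangular sets.
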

\begin{proof}
    Since 
    \beao
    \mu_i^{\oplus}(A)=\sum_{m=0}^i c_m^{I(i)}\mu_{m,i}^*(A)
 \eeao
 we have to show that $\sum_{m=1}^{i-1} c_m^{I(i)}\mu_{m,i}^*(A)=0$. But
 \beao
    0&\leq&\sum_{m=1}^{i-1} c_m^{I(i)}\mu_{m,i}^*(A)\\
    &= & \lim_{t\to\infty}
   \sum_{m=1}^{i-1}b_{I(i)}^{(1)\leftarrow}(t)b_{i-{I(i)}}^{(2) \leftarrow}(t)
    \sum_{\genfrac{}{}{0pt}{}{J\subseteq S\cup \{\emptyset\}}{|J|=m}}\P\left(\bigcap_{j\in J}\{Z_j^{(1)}>tx_j\}\right)\P\left(\bigcap_{j\in S\backslash J}\{Z_j^{(2)}>tx_j\}\right)  \\
     &\leq& 2^{i}\lim_{t\to\infty}
    \sum_{m=1}^{i-1}
    b_{I(i)}^{(1)\leftarrow}(t)b_{i-{I(i)}}^{(2)\leftarrow}(t)
    \P\left(Z_{(m)}^{(1)}>t\min_{j\in S}x_j\right)\P\left(Z_{|S|-m}^{(2)}>t\min_{j\in S}x_j\right)\\
    &\leq& 2^{i}
    \sum_{m=1}^{i-1}\lim_{t\to\infty}
    b_{I(i)}^{(1)\leftarrow}(t)b_{i-{I(i)}}^{(2)\leftarrow}(t)
    \P\left(Z_{((m-1)+1)}^{(1)}>t\min_{j\in S}x_j\right)\P\left(Z_{i-(m-1)}^{(2)}>t\min_{j\in S}x_j\right).
 \eeao
 The right hand side is equal to zero due to \Cref{Lemma 4.2}.
\end{proof}

\begin{proof}[Proof of \Cref{mainTheorem}]
Due to \Cref{lem:RiisPi} it is sufficient to study the convergence on the rectangular sets $A= \{\bz \in \R_+^d: z_j>x_j \;\forall\; j\in S\}$ where $S\subseteq \mathbb{I}$, $|S|\ge i$ and $x_j>0$ for $j\in S$ with $\mu_i^{\oplus}(\partial A)=0$. If $|S|=i$, a consequence of \Cref{lem:RiisPi} is that
\beao
    \lim_{t\to\infty}b_{I(i)}^{(1)\leftarrow}(t)b_{i-I(i)}^{(2)\leftarrow}(t) \P\left(\bZ^{(1)}+\bZ^{(2)}\in tA\right)=\mu_i^{\oplus}(A).
\eeao
If $|S|>i$ then $i\leq d-1$. Thus, using \Cref{Lemma 4.2} and similar
{elaborate} calculations as in the proof of \Cref{Lemma 3.4} {(cf. proof of \Cref{Lemma B.3})}
we can show that
\beao
     \lefteqn{\lim_{t\to\infty}b_{I(i)}^{(1)\leftarrow}(t)b_{i-I(i)}^{(2)\leftarrow}(t) \P\left(\bZ^{(1)}+\bZ^{(2)}\in tA\right)} \\
        && =\lim_{t\to\infty}b_{I(i)}^{(1)\leftarrow}(t)b_{i-I(i)}^{(2)\leftarrow}(t) \P\left(\bZ^{(1)}\in tA\right)+
        \lim_{t\to\infty}b_{I(i)}^{(1)\leftarrow}(t)b_{i-I(i)}^{(2)\leftarrow}(t) \P\left(\bZ^{(2)}\in tA\right)\\
        &&=c_i^{I(i)}\mu^{(1)}_{|S|}(A)\lim_{t\to\infty}\frac{b_i^{(1)\leftarrow}(t)}{b_{|S|}^{(1)\leftarrow}(t)}
        +c_0^{I(i)}\mu^{(2)}_{|S|}(A)\lim_{t\to\infty}\frac{b_i^{(2)\leftarrow}(t)}{b_{|S|}^{(2)\leftarrow}(t)}.
\eeao
In  case $\lim_{t\to\infty}\frac{b_i^{(1)\leftarrow}(t)}{b_{|S|}^{(1)\leftarrow}(t)}=0$ we have $\mu^{(1)}_{i}(A)=0$.
Otherwise, $\mu^{(1)}_{|S|}(A)\lim_{t\to\infty}\frac{b_i^{(1)\leftarrow}(t)}{b_{|S|}^{(1)\leftarrow}(t)}=\mu^{(1)}_{i}(A)$.
In summary,
\begin{align*}
     \lim_{t\to\infty}b_{I(i)}^{(1)\leftarrow}(t)b_{i-I(i)}^{(2)\leftarrow}(t) \P\left(\bZ^{(1)}+\bZ^{(2)}\in tA\right)
        & =c_i^{I(i)}\mu^{(1)}_{i}(A)
        +c_0^{I(i)}\mu^{(2)}_{i}(A) =\mu_i^{\oplus}(A)
\end{align*}
where the final equality is due to 
        \Cref{Lemma B.3}.
\end{proof}

\section{Proofs of the results in Section~\ref{subsec:hrvinallcones}} \label{app:subsec:hrvinallcones}

\begin{proof}[Proof of \Cref{prop:ind}]
Note that $\alpha_i=i\alpha$ is immediate from $b_i(t)=(b_1(t))^{1/i} \in \RV_{1/(i\alpha)}$. Using \Cref{mainTheorem}, it is sufficient to prove the statements for $n=2$ and the rest follows by induction (which are direct and not shown here).
Using the notation of \Cref{mainTheorem}, for any $i=1,\ldots, d$, we have $I(i)=i$ and $c_m^{I(i)}=1$ for $m=0,\ldots,i$ for any $i=1,\ldots, d$ and $\lim_{t\to\infty}b_m^{(1)\leftarrow}(t)/b_{m+1}^{(1)\leftarrow}(t)=0.$ Thus,
 $$ \bZ^{(1)} + \bZ^{(2)} \in\MRV(i\alpha,b_1^{1/i},\E_d^{(i)}) $$
 and \eqref{mrv:ind:sum} follows by induction. Now  if  \eqref{star} is satisfied then (for $n=2$), we have $\mu_{m,i}^*(\cdot)=\binom i m \, \mu_i(\cdot)$ and hence, for any $A\in \cB(\E_d^{(i)})$ with $\mu_i(\partial A)=0$ we get
  $$\mu_i^{\oplus}(A)=\sum_{m=0}^{i}\binom i m\, \mu_i(A)=2^{i}\mu_i(A)$$
 implying that
 $$ \bZ^{(1)} + \bZ^{(2)} \in\MRV(i\alpha,b_1^{1/i},2^{i}\mu_i, \E_d^{(i)}).$$
Now \eqref{star} follows by induction using \Cref{mainTheorem}.
\end{proof}

\begin{proof}[Proof of \Cref{prop:mrv:tailsremainsame}]
Consider $n=2$ and the notation of \Cref{mainTheorem}. Fix some $i\in \{1, \ldots, d\}$. We have $I(i)=i$, $c_m^{I(i)}=0, m=1, \ldots, i-1$ and $c_0^{I(i)}=c_i^{I(i)}=1$. Clearly, $\mu^*_{0,i} =\mu^*_{i,i} = \mu_i$ and hence, $\mu^{\oplus}_i = \mu^*_{0,i}+\mu^*_{i,i}= 2\mu_i.$
Now by \Cref{mainTheorem}, we get
\[\bZ^{(1)}+\bZ^{(2)}\in\MRV(\alpha_i,b_i,2\mu_i,\E_d^{(i)}).\]
The final result can now be derived using induction (which we skip here).
\end{proof}

\begin{proof}[Proof of \Cref{cor:asymdep}]
Since $\alpha_i=\alpha<\alpha+\alpha=\alpha_{m}+\alpha_{i-m}$, this holds as a direct consequence of \Cref{prop:mrv:tailsremainsame}.
\end{proof}

\section{Proof of Proposition \ref{prop:Delta1}} \label{sec:proofpropDelta1}

\begin{proof}
By Definition \ref{def:amrv}, $\Delta=1$, $b_1(t)\in \RV_{1/\alpha}$ and for some $0<\gamma<\alpha/d$, $b_i(t)= t^{1/(i(\alpha+\gamma))}, i=2,\ldots, d$. Then $b_1^{\leftarrow}(t)\in \RV_{\alpha}$ and let $b_1^{\leftarrow}(t)=t^{\alpha}\ell(t)$ where $\ell(t)$ is some slowly varying function. Also define $\bS^{(n)}:=\sum_{k=1}^n\bZ^{(k)}$. We prove the statement by induction.
\begin{enumerate}[(1)]
\item Consider $i=1$. By definition, \eqref{eq:mrv:muiplusdelta1} holds for $n=i=1$. Using
classical MRV results \cite[Theorem 1.30]{lindskog:2004thesis}, \citep[Example 3.2]{hult:samorodnitsky:2007}, we obtain $\bS^{(n)}\in \MRV(\alpha,b_1,n\mu_{1},\E_d^{(1)})$. Thus \eqref{eq:mrv:muiplusdelta1} holds for all $n\ge i=1$; the form of the limit measure is \eqref{eq:limit:muiplusdelta1} with $|S|=i=1$.
\item  Fix $i_0\in \{2,\ldots,d\}$. By way of induction, assume that for any $i\in \{1,\ldots,i_0-1\}$, \eqref{eq:mrv:muiplusdelta1} holds for all $n\ge i$ and \eqref{eq:mrv:nciplusdelta1} holds  for $n<i$. First, in part (i) we show that for $i=i_0$, \eqref{eq:mrv:nciplusdelta1}  holds for $n<i=i_0$. Then, we show \eqref{eq:mrv:muiplusdelta1} holds  for $n=i=i_0$ in part (ii) and for all $n> i=i_0$ in part (iii).
Note that the induction base case holds for $i_0=2$. Moreover, for $n=1$, \eqref{eq:mrv:nciplusdelta1} holds for $i=2,\ldots,d$.
\item[(i)] Additionally assume that for $i=i_0$, \eqref{eq:mrv:nciplusdelta1} holds for all $n=1,\ldots, n_0-1$ where $n_0<i_0$. Here we show \eqref{eq:mrv:nciplusdelta1} holds for $i=i_0$ and $n=n_0$. If $i_0=2$, then the only choice of $n$ is  $n=n_0=1$ and \eqref{eq:mrv:nciplusdelta1} holds since $\bS^{(n_0)}=\bZ^{(1)}\in \mathcal{NC}(t^{2(\alpha+\gamma)},\E_d^{(2)})$.  So assume $i_0\ge 3$. Note that $\bS^{(n_0)}=\bS^{(n_0-1)}+\bZ^{(n_0)}$. Using the notation of \Cref{mainTheorem}, for $j=0$,
\begin{align*}
\bar{c}_{0,n_0}^{(i_0)}=\bar{c}_{j,n_0}^{(i_0)}&:=\max_{0\le m\le i_0}\left\{\limsup_{t\to\infty} \frac{b_{j}^{(n_0-1)\leftarrow}(t)b_{i_0-j}^{\leftarrow}(t)}{b_{m}^{(n_0-1)\leftarrow}(t)b_{i_0-m}^{\leftarrow}(t)} \right\} \\
& \ge \limsup_{t\to\infty} \frac{b_{0}^{(n_0-1)\leftarrow}(t)b_{i_0}^{\leftarrow}(t)}{b_{1}^{(n_0-1)\leftarrow}(t)b_{i_0-1}^{\leftarrow}(t)} = \limsup_{t\to\infty} \frac{1\cdot t^{i_0(\alpha+\gamma)}}{t^{\alpha}\ell(t)\cdot t^{(i_0-1)(\alpha+\gamma)}} =\infty,
\end{align*}
 since $ b_{1}^{(n_0-1)}(t)=b_1(t)$ from (1) and $b_1^{\leftarrow}(t)=t^{\alpha}\ell(t)$. Similarly for $j=1$,
\begin{align*}
\bar{c}_{1,n_0}^{(i_0)}& =\max_{0\le m\le i_0}\left\{\limsup_{t\to\infty} \frac{b_{1}^{(n_0-1)\leftarrow}(t)b_{i_0-1}^{\leftarrow}(t)}{b_{m}^{(n_0-1)\leftarrow}(t)b_{i_0-m}^{\leftarrow}(t)} \right\} \\
& =\max\left\{0, 1, \max_{2\le m \le i_0-1}\limsup_{t\to\infty} \frac{t^{\alpha}\ell(t) \cdot t^{(i_0-1)(\alpha+\gamma)}}{b_{m}^{(n_0-1)\leftarrow}(t)\cdot t^{(i_0-m)(\alpha+\gamma)}}, \limsup_{t\to\infty} \frac{t^{\alpha}\ell(t) \cdot t^{(i_0-1)(\alpha+\gamma)}}{b_{i_0}^{(n_0-1)\leftarrow}(t)b_{0}^{(1)\leftarrow}(t)} \right\} \\
& = \max\left\{0, 1,  \limsup_{t\to\infty} \frac{t^{i_0\alpha+(i_0-1)\gamma}\ell(t)}{t^{i_0(\alpha+\gamma)}}, \limsup_{t\to\infty} \frac{t^{i_0\alpha+(i_0-1)\gamma}\ell(t)}{t^{i_0(\alpha+\gamma)}}  \right\} =1.
\end{align*}
The third equality results from $b_m^{(n_0-1)\leftarrow}(t)= t^{m(\alpha+\gamma)}$ for $m\in \{2,\ldots, i_0-1\}$ where $n_0-1< i_0-1$  and $b_{i_0}^{(n_0-1)\leftarrow}(t) =t^{i_0(\alpha+\gamma)} $ (by induction assumption). Similarly, for $j=i_0-1$, we can check that
\[c_{i_0-1,n_0}=1.\] Finally, for $j\in \{2,\ldots, i_0-2, i_0\}$,
\begin{align*}
\bar{c}_{j,n_0}^{(i_0)}& =\max_{0\le m\le i_0}\left\{\limsup_{t\to\infty} \frac{b_{j}^{(n_0-1)\leftarrow}(t)b_{i_0-j}^{\leftarrow}(t)}{b_{m}^{(n_0-1)\leftarrow}(t)b_{i_0-m}^{\leftarrow}(t)} \right\} \\
& \ge \limsup_{t\to\infty} \frac{b_{j}^{(n_0-1)\leftarrow}(t)b_{i_0-j}^{\leftarrow}(t)}{b_{1}^{(n_0-1)\leftarrow}(t)b_{i_0-1}^{\leftarrow}(t)} = \limsup_{t\to\infty} \frac{t^{j(\alpha+\gamma)}\cdot t^{(i_0-j)(\alpha+\gamma)}}{t^{\alpha}\ell(t)\cdot t^{(i_0-1)(\alpha+\gamma)}} =\infty.
\end{align*}
Hence, $I(i_0)=1$ with
\begin{align*}
c_{m,n_0}^{I(i_0)} =\begin{cases}
    1, \quad m \in \{1,i_0-1\},\\
    0, \quad \text{otherwise}.
\end{cases}
\end{align*}
Therefore by \Cref{mainTheorem}, $\bS^{(n_0)}\in\MRV^*(\alpha_{i_0,n_0},\widetilde{b}_{i_0,n_0},\mu_{i_0,n_0}^{\oplus},\E_d^{(i_0)})$ where $\alpha_{i_0,n_0}=\alpha_{1,n_0-1}+\alpha_{i_0-1}=1+\infty=\infty$ (recall $i_0\ge 3$), $\widetilde{b}_{i_0,n_0}(t)=t^{i_0\alpha+(i_0-1)\gamma}\ell(t)$ and $\mu_{i_0,n_0}^{\oplus}\equiv 0$. 
Defining ${b}_{i_0,n_0}(t) = t^{i_0(\alpha+\gamma)}=b_{i_0}(t)$, since  $\widetilde{b}_{i_0}(t)=o(b_{i_0}(t))$, we have $\bS^{(n_0)}\in\mathcal{NC}({b}_{i_0}(t),\E_d^{(i_0)})$. 
Therefore, by induction, \eqref{eq:mrv:nciplusdelta1} holds for all $n<i_0$.

\item[(ii)] Now, we show \eqref{eq:mrv:muiplusdelta1} holds for $n= n_0= i_0$.  For $j=0,\ldots,i_0-2$,
\begin{align*}
\bar{c}_{j,n_0}^{(i_0)}=\bar{c}_{j,i_0}^{(i_0)}&:=\max_{0\le m\le i_0}\left\{\limsup_{t\to\infty} \frac{b_{j}^{(i_0-1)\leftarrow}(t)b_{i_0-j}^{\leftarrow}(t)}{b_{m}^{(i_0-1)\leftarrow}(t)b_{i_0-m}^{\leftarrow}(t)} \right\} \\
& \ge \limsup_{t\to\infty} \frac{b_{j}^{(i_0-1)\leftarrow}(t)b_{i_0-j}^{\leftarrow}(t)}{b_{i_0-1}^{(i_0-1)\leftarrow}(t)b_{1}^{\leftarrow}(t)} \\ & = \limsup_{t\to\infty} \frac{(t^\alpha\ell(t))^j\cdot t^{(i_0-j)(\alpha+\gamma)}}{(t^{\alpha}\ell(t))^{(i_0-1)}\cdot t^{\alpha}\ell(t)} =\infty,
\end{align*}
since by induction assumption  $b_j^{(n_0-1)}(t)= b_j^{(i_0-1)}(t)=(b_1(t))^{1/j}$ for all $j\le n_0-1$. Similarly for $j=i_0$, we have
$$\bar{c}_{i_0,n_0}=\bar{c}_{i_0,i_0}=\infty$$ since we have $b_{i_0}^{(i_0-1)}(t)=t^{i_0(\alpha+\gamma)}$ from part 2(i) of the proof. For $j=i_0-1$,
 \begin{align*}
\bar{c}_{i_0-1,n_0}^{(i_0)} = \bar{c}_{i_0-1,i_0}^{(i_0)}&:=\max_{0\le m\le i_0}\left\{\limsup_{t\to\infty} \frac{b_{i_0-1}^{(i_0-1)\leftarrow}(t)b_{1}^{\leftarrow}(t)}{b_{m}^{(i_0-1)\leftarrow}(t)b_{i_0-m}^{\leftarrow}(t)} \right\} \\
& = \max_{0\le m\le i_0} \left\{\limsup_{t\to\infty} \frac{(t^{\alpha}\ell(t))^{(i_0-1)}\cdot t^{\alpha}\ell(t)}{b_{m}^{(i_0-1)\leftarrow}(t)b_{i_0-m}^{\leftarrow}(t)}\right\} = 1.
\end{align*}
Finally, for $j=i_0$,
 \begin{align*}
\bar{c}_{i_0,n_0}^{(i_0)} = \bar{c}_{i_0,i_0}^{(i_0)}&:=\max_{0\le m\le i_0}\left\{\limsup_{t\to\infty} \frac{b_{i_0}^{(i_0-1)\leftarrow}(t)b_{0}^{\leftarrow}(t)}{b_{m}^{(i_0-1)\leftarrow}(t)b_{i_0-m}^{\leftarrow}(t)} \right\} \\
&  \ge \limsup_{t\to\infty} \frac{t^{i_0(\alpha+\gamma)}}{b_{i_0-1}^{(i_0-1)\leftarrow}(t)b_{1}^{\leftarrow}(t)} = \infty.
\end{align*}
Hence, $I(i_0)=i_0-1$ with
\begin{align*}
c_{m,n_0}^{I(i_0)}= c_{m,i_0}^{I(i_0)} =\begin{cases}
    1, \quad m=i_0-1,\\
    0, \quad \text{otherwise}.
\end{cases}
\end{align*}
Therefore, by \Cref{mainTheorem}, $\bS^{(n_0)}=\bS^{(i_0)}\in\MRV(\alpha_{i_0,n_0},b_{i_0,n_0},\mu_{i_0,n_0}^{\oplus},\E_d^{(i_0)})$ where $$\alpha_{i_0,n_0}=\alpha_{i_0,i_0}=\alpha_{i_0-1,i_0-1}+\alpha_{1}=(i_0-1)\alpha+\alpha=i_0\alpha,$$ ${b}_{i_0,n_0}^{\leftarrow}(t)= b_{i_0}^{(i_0-1)\leftarrow}(t)b_1^{\leftarrow}(t)=(t^\alpha\ell(t))^{i_0}$ and
\begin{align*}
\mu_{i_0,n_0}^{\oplus} &= \mu_{i_0,i_0}^{\oplus} = \sum_{m=0}^{i_0} c_{m,i_0}^{I(i_0)}\mu^*_{m,i_0,i_0} = \mu_{i_0-1,i_0,i_0},
\end{align*}
where for $A=\{\bz\in\R_+^{d}:z_j>x_j \;\forall\; j\in S\}\in \mathcal{R}^{(i_0)}$ with $|S|= i_0$, $x_j>0$ for $j\in S$
\begin{align*}
& \mu_{i_0-1,i_0,i_0}(A)\\
& = \sum_{\genfrac{}{}{0pt}{}{J\subseteq S}{|J|=i_0-1}} \mu_{i_0-1,i_0-1}^{\oplus} \,\Big(\{\bz\in\E_d^{(i_0-1)}:  z_{j}>x_{j}\,\forall\,j\in J\}\}\Big)  \mu_{1}\left(\{\bz\in\E_d^{(1)}:z_{j}>x_{j} \;\forall\, j\in S\setminus J\}\right)\\
& = \sum_{\genfrac{}{}{0pt}{}{J\subseteq S}{|J|=i_0-1}} \left[(i_0-1)!\prod_{j\in J}
    \mu_1\left(\{\bz\in \E_d^{(i)}:z_j>x_j\}\right)\right] \mu_1\left(\{\bz\in \E_d^{(i)}:z_j>x_j\; \forall\; j\in S\setminus J\}\right) \\
&= \sum_{j\in S}  (i_0-1)!\left[ \prod_{k\in S\setminus \{j\}}
    \mu_1\left(\{\bz\in \E_d^{(i)}:z_k>x_k\}\right)\right]
    \mu_1\left(\{\bz\in \E_d^{(i)}:z_j>x_j\}\right)\\
    &= i_0! \prod_{j\in S}
    \mu_1\left(\{\bz\in \E_d^{(i)}:z_j>x_j\}\right).
\end{align*}
Hence, \eqref{eq:mrv:muiplusdelta1} holds for $n_0=i_0$.
\item[(iii)] Here we show \eqref{eq:mrv:muiplusdelta1} holds for all $n\ge i_0$. By way of induction (additionally) assume that for $i=i_0$, \eqref{eq:mrv:muiplusdelta1} holds for all $n\in \{i_0,i_0+1,\ldots,n_0\}$. We will show that then it also holds for $n=n_0+1$. By part 2(ii), we know that it holds for $n_0=i_0$. For $j=0, \ldots, i_0-2$,
\begin{align*}
\bar{c}_{j,n_0+1}^{(i_0)}&:=\max_{0\le m\le i_0}\left\{\limsup_{t\to\infty} \frac{b_{j}^{(n_0)\leftarrow}(t)b_{i_0-j}^{\leftarrow}(t)}{b_{m}^{(n_0)\leftarrow}(t)b_{i_0-m}^{\leftarrow}(t)} \right\} \\
& \ge \limsup_{t\to\infty} \frac{b_{j}^{(n_0)\leftarrow}(t)b_{i_0-j}^{\leftarrow}(t)}{b_{i_0}^{(n_0)\leftarrow}(t)b_{0}^{\leftarrow}(t)} \\ & = \limsup_{t\to\infty} \frac{(t^{\alpha}\ell(t))^{j}\cdot t^{(i_0-j)(\alpha+\gamma)}}{(t^{\alpha}\ell(t))^{i_0}\cdot 1} = \limsup_{t\to\infty} t^{(i_0-j)\gamma}(\ell(t))^{(j-i_0)} = \infty,
\end{align*}
since by induction assumption  $b_j^{(n_0)}(t) =(b_1(t))^{1/j}$ for all $j\le n_0$. By similar arguments we have for
\begin{align*}
\bar{c}_{i_0-1,n_0+1}^{(i_0)} =\bar{c}_{i_0,n_0+1}^{(i_0)}=1.
\end{align*}
Hence, $I(i_0)=i_0-1$, and
\begin{align*}
c_{m,n_0+1}^{I(i_0)} =\begin{cases}
    1, \quad m \in \{i_0-1, i_0\}, \\
    0, \quad \text{otherwise}.
\end{cases}
\end{align*}
Therefore by \Cref{mainTheorem}, $\bS^{(n_0+1)}\in\MRV(\alpha_{i_0,n_0+1},b_{i_0,n_0+1},\mu_{i_0,n_0+1}^{\oplus},\E_d^{(i_0)})$ where 
\begin{align*}
\alpha_{i_0,n_0+1} & =\alpha_{i_0-1,n_0}+\alpha_{1}=(i_0-1)\alpha+\alpha=i_0\alpha,\\
{b}_{i_0,n_0+1}^{\leftarrow}(t) & = b_{i_0-1}^{(n_0)\leftarrow}(t)b_1^{\leftarrow}(t)=(t^\alpha\ell(t))^{i_0}, \text{ and},\\
\mu_{i_0,n_0+1}^{\oplus} &= \sum_{m=0}^{i_0} c_{m,n_0+1}^{I(i_0)}\mu^*_{m,i_0,n_0+1} = \mu_{i_0-1,i_0,n_0+1} +  \mu_{i_0,i_0,n_0+1},
\end{align*}
where for $A=\{\bz\in\R_+^{d}:z_j>x_j\;\forall\; j\in S\}\in \mathcal{R}^{(i_0)}$ with $|S|= i_0$, $x_j>0$ for  $j\in S$
\begin{align*}
& \mu_{i_0-1,i_0,n_0+1}(A)\\
& = \sum_{\genfrac{}{}{0pt}{}{J\subseteq S}{|J|=i_0-1}} \mu_{i_0-1,n_0}^{\oplus} \,\Big(\{\bz\in\E_d^{(i_0-1)}: z_{j}>x_{j} \;\forall\;j\in J\}\Big)  \mu_{1}\left(\{\bz\in\E_d^{(1)}:z_{j}>x_{j}\;\forall\;j\in S\setminus J\}\right)\\
& = \sum_{\genfrac{}{}{0pt}{}{J\subseteq S}{|J|=i_0-1}} \left[\frac{n_0!}{(n_0-i_0+1)!}\prod_{j\in J}
    \mu_1\left(\{\bz\in \E_d^{(i)}:z_j>x_j\}\right)\right] \mu_1\left(\{\bz\in \E_d^{(i)}:z_j>x_j, j\in S\setminus J\}\right) \\
&= \sum_{j\in S}  \frac{n_0!}{(n_0-i_0+1)!}\left[ \prod_{k\in S\setminus \{j\}}
    \mu_1\left(\{\bz\in \E_d^{(i)}:z_k>x_k\}\right)\right]
    \mu_1\left(\{\bz\in \E_d^{(i)}:z_j>x_j\}\right)\\
    &= i_0 \cdot \frac{n_0!}{(n_0-i_0+1)!} \prod_{j\in S}
    \mu_1\left(\{\bz\in \E_d^{(i)}:z_j>x_j\}\right),
\end{align*}
and,
\begin{align*}
 \mu_{i_0,i_0,n_0+1}(A) & = \sum_{\genfrac{}{}{0pt}{}{J\subseteq S}{|J|=i_0}} \mu_{i_0,n_0}^{\oplus} \,\Big(\{\bz\in\E_d^{(i_0-1)}: z_{j}>x_{j} \;\forall\;j\in J\}\Big) \\
& = \frac{n_0!}{(n_0-i_0)!}\prod_{j\in J}
    \mu_1\left(\{\bz\in \E_d^{(i_0)}:z_j>x_j\}\right).
\end{align*}
The measures $\mu_{i_0-1,n_0}^{\oplus},\mu_{i_0,n_0}^{\oplus}, \mu_1 $ are obtained from our assumptions and induction hypothesis. Now,
\begin{align*}
\mu_{i_0,n_0+1}^{\oplus}(A)& = \mu_{i_0-1,i_0,n_0+1}(A)+\mu_{i_0,i_0,n_0+1}(A)\\
   & = \left[ i_0 \cdot \frac{n_0!}{(n_0-i_0+1)!} + \frac{n_0!}{(n_0-i_0)!} \right] \prod_{j\in S}
    \mu_1\left(\{\bz\in \E_d^{(i)}:z_j>x_j\}\right)\\
    & = \frac{(n_0+1)!}{(n_0+1-i_0)!} \prod_{j\in S}
    \mu_1\left(\{\bz\in \E_d^{(i)}:z_j>x_j\}\right).
\end{align*}
Hence, \eqref{eq:mrv:muiplusdelta1} holds for $i=i_0$ and $n=n_0+1$, thus by induction it holds for all $n\ge i_0$.
\end{enumerate}
\end{proof}
\section{Proof of  Theorem \ref{thm:randomsum:main}} \label{sec:proofmainrandomsum}

For the proof we require some auxiliary results.
\begin{lemma} \label{aux_1}
Let the assumptions of \Cref{thm:randomsum:main} hold. Define $\bZ^{\oplus}:=(Z^{\oplus}_1,\ldots, Z^{\oplus}_d):=\sum_{k=1}^{d}\bZ^{(k)}$
and denote by  
$Z^{\oplus}_{(1)}\geq \ldots \geq Z^{\oplus}_{(d)}$
 the order statistics of
$Z^{\oplus}_1,\ldots, Z^{\oplus}_d$. Also {let $Z_{(1)}^{(k)}\geq\ldots Z_{(d)}^{(k)})$ be the order statistics of the elements of $\bZ^{(k)}=(Z_{1}^{(k)},\ldots, Z_{d}^{(k)})$ for any $k\ge 1$.} Furthermore,
 for $n\in\N$ and $i=1,\ldots,d$ define
\beao
    \alpha_{i,n}:=\sup_{S\subseteq \mathbb{I}, \,
        |S|\leq  i }\sup_{t>0}\frac{\P\left(\bigcap_{j\in S}\left\{\sum_{k=1}^nZ^{(k)}_j>t\right\}\right)}{\P\left(Z^{\oplus}_{(|S|)}>t\right)}.
\eeao
Then  there exists a finite constant $K_i>0$
such that for any $n\in \N$:
\beao
    \alpha_{i,n+1}\leq  K_i^{n}.
\eeao
\end{lemma}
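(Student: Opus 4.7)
The plan is to establish the claim by induction on $n$, reducing it to a one-step recursion of the form $\alpha_{i,n+1} \leq K_i \, \alpha_{i,n}$ for a constant $K_i$ built from $C^*$ in \eqref{E6} and $C^{**}$ in \eqref{E7}. The base case is $\alpha_{i,1} \leq 1$: since $\bZ^{(1)} \leq \bZ^{\oplus}$ coordinatewise (the remaining $d-1$ summands in $\bZ^{\oplus}$ are non-negative), one has
\[
\P\Bigl(\bigcap_{j\in S}\{Z^{(1)}_j > t\}\Bigr) \leq \P\Bigl(\bigcap_{j\in S}\{Z^{\oplus}_j > t\}\Bigr) \leq \P\bigl(Z^{\oplus}_{(|S|)} > t\bigr),
\]
where the last step uses that the minimum of $\{Z^{\oplus}_j : j \in S\}$ is at most the $|S|$-th largest among all $d$ components of $\bZ^{\oplus}$.

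For the inductive step I would split $\sum_{k=1}^{n+1}\bZ^{(k)} = \sum_{k=1}^n \bZ^{(k)} + \bZ^{(n+1)}$ and apply the elementary bound $a+b > t \Rightarrow \max(a,b)>t/2$ together with a union bound over subsets $J \subseteq S$ of coordinates where the first summand carries the mass. Independence of $\bZ^{(n+1)}$ from $\bZ^{(1)},\ldots,\bZ^{(n)}$ then gives
\[
\P\Bigl(\bigcap_{j\in S}\Bigl\{\textstyle\sum_{k=1}^{n+1} Z^{(k)}_j > t\Bigr\}\Bigr) \leq \sum_{J\subseteq S} \P\Bigl(\bigcap_{j\in J}\Bigl\{\textstyle\sum_{k=1}^n Z^{(k)}_j > t/2\Bigr\}\Bigr) \, \P\Bigl(\bigcap_{j\in S\setminus J}\{Z^{(n+1)}_j > t/2\}\Bigr).
\]
The induction hypothesis bounds the first factor by $\alpha_{i,n} \, \P(Z^{\oplus}_{(|J|)} > t/2)$, while $\bZ^{(n+1)} \eqd \bZ^{(1)} \leq \bZ^{\oplus}$ bounds the second by $\P(Z^{\oplus}_{(|S\setminus J|)} > t/2)$.

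Dividing through by $\P(Z^{\oplus}_{(|S|)} > t)$ and inserting the intermediate quantity $\P(Z^{\oplus}_{(|S|)} > t/2)$, each term splits into two ratios that are controlled, respectively, by \eqref{E7} (giving the factor $C^{**}$ uniformly in $t$) and by \eqref{E6} (giving a sum bounded by $C^*$). Grouping subsets $J$ by cardinality $m=|J|$ introduces a binomial coefficient $\binom{|S|}{m}$ which, summed over $m$, is absorbed into $2^i$. Together this yields $\alpha_{i,n+1} \leq 2^i C^* C^{**} \alpha_{i,n}$, so $K_i := 2^i C^* C^{**}$ and the induction closes with $\alpha_{i,n+1} \leq K_i^n$.

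The principal subtlety is the scale mismatch introduced by the splitting at $t/2$: the decomposition produces events at scale $t/2$ whereas the denominator $\P(Z^{\oplus}_{(|S|)} > t)$ lives at scale $t$. The bounds \eqref{E6} and \eqref{E7} are precisely engineered to absorb this: \eqref{E7} transports scales uniformly (a Potter-type estimate), and \eqref{E6} is the uniform version of the partition balance among joint-exceedance tails at a common scale, itself a consequence of the index identity $I(i)=i$ for $\bZ^{\oplus}$ noted in Remark~\ref{Remark 2}(a). A minor further care is needed because the supremum in the definition of $\alpha_{i,n}$ ranges over all $S$ with $|S|\leq i$, but this only strengthens the recursion since the same factor $K_i$ dominates uniformly in $|S|$.
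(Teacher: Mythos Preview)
Your proof is correct and follows the same overall strategy as the paper: a one-step recursion obtained by splitting at threshold $t/2$ and invoking the uniform bounds \eqref{E6} and \eqref{E7}. Your decomposition via the direct expansion $\bigcap_{j\in S}(A_j\cup B_j)=\bigcup_{J\subseteq S}\bigl(\bigcap_{j\in J}A_j\cap\bigcap_{j\in S\setminus J}B_j\bigr)$ is in fact a bit cleaner than the paper's two-stage split (the paper separates the case where all $Z_j^{(n+1)}\le t/2$ and handles it by conditioning and integration, then double-sums over the remaining cases), and it yields the slightly smaller constant $K_i=2^iC^*C^{**}$ compared with the paper's $2^{2i}C^*C^{**}+C^{**}$.

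One small point to make explicit: when $J=\emptyset$ the first factor equals $1$, so your bound $\alpha_{i,n}\,\P(Z^\oplus_{(0)}>t/2)=\alpha_{i,n}$ tacitly uses $\alpha_{i,n}\ge 1$. This is automatic if $S=\emptyset$ is admitted in the defining supremum (with the convention $\P(Z^\oplus_{(0)}>t)=1$ already implicit in \eqref{E6}); alternatively, replace $\alpha_{i,n}$ by $\max(\alpha_{i,n},1)$ in the recursion and take $K_i:=\max(2^iC^*C^{**},1)$, which preserves the induction since $\alpha_{i,1}\le 1$.
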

For one-dimensional random variables with $i=d=1$, a stronger result holds: for any $\epsilon>0$ there exists a constant $K>0$ such that the left hand side is bounded by $K(1+\epsilon)^n$ (cf. \cite[Lemma 1.3.5]{embrechts:kluppelberg:mikosch:1997}).

\begin{proof}
    First, we show recursively that for any $i\in\mathbb{I}$ there exists a constant $K_i>0$
    such that $\alpha_{i,n+1}\leq K_i\alpha_{i,n}$ for any $n\geq d$.
    Let $S\subseteq \mathbb{I}$ with $|S|\leq i$ and $n\geq d$. Then
    \beam \label{E1}
        \lefteqn{\P\left(\bigcap_{j\in S}\left\{\sum_{k=1}^{n+1} Z^{(k)}_j>t\right\}\right)} \nonumber\\
            &=&\sum_{\genfrac{}{}{0pt}{}{J\subseteq S}{J\not=\emptyset} }
            \P\left(\bigcap_{j\in S}\left\{\sum_{k=1}^{n+1} Z^{(k)}_j>t\right\}\cap
                \bigcap_{j\in J}\left\{ Z^{(n+1)}_j>t/2\right\}\cap
                \bigcap_{j\in S\backslash J}\left\{ Z^{(n+1)}_j\leq t/2\right\}\right)\nonumber \\
            &&+\P\left(\bigcap_{j\in S}\left\{\sum_{k=1}^{n+1} Z^{(k)}_j>t\right\}\cap
                \bigcap_{j\in S}\left\{ Z^{(n+1)}_j\leq t/2\right\}\right) \nonumber\\
            &=:&J_{n,1}(t,S)+J_{n,2}(t,S).
            \eeam
    We investigate the two terms separately. First,
    \beao
        \lefteqn{J_{n,1}(t,S)}\\
            &&\leq \sum_{\genfrac{}{}{0pt}{}{J\subseteq S}{J\not=\emptyset} }
            \P\left(\bigcap_{j\in S\backslash J}\left\{\sum_{k=1}^{n+1} Z^{(k)}_j>t\right\}\cap
                \bigcap_{j\in J}\left\{ Z^{(n+1)}_j>t/2\right\}\right)\\
            &&\leq \sum_{\genfrac{}{}{0pt}{}{J\subseteq S}{J\not=\emptyset} }\sum_{K\subseteq S\backslash J \cup\{\emptyset\}}
            \P\left(\bigcap_{j\in S\backslash (J\cup K) }\left\{\sum_{k=1}^{n} Z^{(k)}_j>t/2\right\}\cap
                \bigcap_{j\in J}\left\{ Z^{(n+1)}_j>t/2\right\}\cap \bigcap_{j\in K}\left\{ Z^{(n+1)}_j>t/2\right\}\right)\\
            &&\leq \sum_{\genfrac{}{}{0pt}{}{J\subseteq S}{J\not=\emptyset} }\sum_{K\subseteq S\backslash J\cup\{\emptyset\}}
            \P\left(\bigcap_{j\in S\backslash (J\cup K)}\left\{\sum_{k=1}^{n} Z^{(k)}_j>t/2\right\}\right)
                \P\left(\bigcap_{j\in J\cup K}\left\{ Z^{(n+1)}_j>t/2\right\}\right).
    \eeao
    Since the set $S\backslash J\cup K$ has at most $i-1$ elements and by definition $\alpha_{i-1,n}\leq \alpha_{i,n}$, we have that

    \beao
        J_{n,1}(t,S)
            \leq \alpha_{i,n}\sum_{\genfrac{}{}{0pt}{}{J\subseteq S}{J\not=\emptyset} }\sum_{K\subseteq S\backslash J\cup\{\emptyset\}}
            \P\left( Z_{(|S\backslash (J\cup K)|)}^{\oplus}>t/2\right)
                \P\left(Z_{(|J\cup K|)}^{(n+1)}>t/2\right).
    \eeao
    Now applying  \eqref{E6} and \eqref{E7} we have
    \beam \label{E2}
        \sup_{\genfrac{}{}{0pt}{}{S\subseteq \mathbb{I}}{ |S|\leq i }}\sup_{t>0}\frac{J_{n,1}(t,S)}{\P\left(Z_{(|S|)}^{\oplus}>t\right)}&\leq& \alpha_{i,n}\sup_{\genfrac{}{}{0pt}{}{S\subseteq \mathbb{I}}{ |S|\leq i }}\sum_{\genfrac{}{}{0pt}{}{J\subseteq S}{J\not=\emptyset} }\sum_{K\subseteq S\backslash J\cup\{\emptyset\}}C^*C^{**} \nonumber\\
        &\leq & \alpha_{i,n} 2^{2i} C^*C^{**} =\alpha_{i,n}\widetilde K_i
    \eeam
    with $\widetilde K_i:=2^{2i}C^*C^{**} $.
    Next, for the second term in \eqref{E1} and $S=\{j_1,\ldots,j_i\}$ we have
    \beao
        \lefteqn{\sup_{\genfrac{}{}{0pt}{}{S\subseteq \mathbb{I}}{ |S|\leq i }}\sup_{t>0}\frac{J_{n,2}(t,S)}{\P\left(Z_{|S|}^{\oplus}>t\right)}} \nonumber\\
            &&=\sup_{\genfrac{}{}{0pt}{}{S\subseteq \mathbb{I}}{ |S|\leq i }}\sup_{t>0}\int_0^{t/2}\cdots\int_0^{t/2}
            \frac{\P\left(\bigcap_{j\in S}\left\{\sum_{k=1}^{n} Z^{(k)}_j>t-y_j\right\}\right)}{\P\left(Z_{|S|}^{\oplus}>t/2\right)}\nonumber\\
            &&\quad \quad\qquad\qquad \cdot\frac{\P\left(Z_{|S|}^{\oplus}>t/2\right)}{\P\left(Z_{|S|}^{\oplus}>t\right)}F_{Z_{j_1},\ldots,Z_{j_i}}(dy_1,\ldots,dy_i) \nonumber\\
            &&\leq C^{**}\sup_{\genfrac{}{}{0pt}{}{S\subseteq \mathbb{I}}{ |S|\leq i }}\sup_{t>0}\int_0^{t/2}\cdots\int_0^{t/2}
            \frac{\P\left(\bigcap_{j\in S}\left\{ \sum_{k=1}^{n} Z^{(k)}_j>t/2\right\}\right)}{\P\left(Z_{(|S|)}^{\oplus}>t/2\right)}F_{Z_{j_1},\ldots,Z_{j_i}}(dy_1,\ldots,dy_i)            \nonumber
    \eeao
    where we applied \eqref{E7} once more. Now the last term above is bounded by $C^{**} \alpha_{i,n}$ and hence we have
    \beam \label{E3}
        \sup_{\genfrac{}{}{0pt}{}{S\subseteq \mathbb{I}}{ |S|\leq i} }\sup_{t>0}\frac{J_{n,2}(t,S)}{\P\left(Z_{|S|}^{\oplus}>t\right)}\leq C^{**} \alpha_{i,n}.
    \eeam 
    Now from
    \eqref{E1}, \eqref{E2} and \eqref{E3} we get
    \beam \label{E4}
        \alpha_{i,n+1}\leq \alpha_{i,n}\widetilde K_i+\alpha_{i,n} C^{**}= (\widetilde K_i+C^{**})\alpha_{i,n}.
    \eeam
    Note that  $\alpha_{i,d}\leq 1$ for $i=1,\ldots,d$. Thus applying  \eqref{E4} recursively we obtain \linebreak $\alpha_{i,n+1}\leq (\widetilde K_i+C^{**})^{{n-d}}$ for $n\geq d$, $i=1,\ldots,d$. But for $n\leq d$
we have of course $\alpha_{i,n}\leq 1$ for $i=1,\ldots,d$. Thus, with $K_i=\max(1,\widetilde K_i+C^{**})$  the statement of the lemma is satisfied.
\end{proof}

\begin{proof}[Proof of \Cref{thm:randomsum:main}]
Define $\bZ^{\oplus}:=(Z^{\oplus}_1,\ldots, Z^{\oplus}_d):=\sum_{k=1}^{d}\bZ^{(k)}$ and denote by  
$Z^{\oplus}_{(1)}\geq \ldots \geq Z^{\oplus}_{(d)}$
 the order statistics of
$Z^{\oplus}_1,\ldots, Z^{\oplus}_d$.
Let  $A= \{\bz \in \R_+^d: z_j>x_j \,\forall\; j\in S\}$ be a rectangular set in $\E_d^{(i)}$  where $S\subseteq \mathbb{I}$, $|S|\geq i$ and $x_j>0,\; \forall\, j\in S$ with $\mu_i(\partial A)=0$. Suppose $\widetilde S\subseteq S$ with $|\widetilde S|=i$. Then
\beam \label{E5}
    \lefteqn{\lim_{t\to\infty}\frac{\P\left(\sum_{k=1}^{\tau}\bZ^{(k)}\in tA\right)}{\P\left(Z_{(i)}^{\oplus}>t\right)}} \nonumber\\
        &&=\lim_{t\to\infty}\sum_{n=0}^{\infty}\P(\tau=n)
        \frac{\P\left(\bigcap_{j\in S}\left\{\sum_{k=1}^nZ_{j}^{(k)}>tx_{j}\right\}\right)}{
        \P\left(Z_{(i)}^{\oplus}>t\right)}.
\eeam
But for any $n\in\N$ we have
\beam  \label{E11}
    0&\leq& \sup_{t>0}\frac{\P\left(\bigcap_{j\in S}\left\{\sum_{k=1}^nZ_{j}^{(k)}>tx_{j}\right\}\right)}{
        \P\left(Z_{(i)}^{\oplus}>t\right)} \nonumber\\
    &\leq& \sup_{t>0}\frac{\P\left(\bigcap_{j\in \widetilde S}\left\{\sum_{k=1}^nZ_{j}^{(k)}>t\min_{j\in S}x_{j}\right\}\right)}{
        \P\left(Z_{(i)}^{\oplus}>t\min_{j\in S} x_{j}\right)}
        \frac{        \P\left(Z_{(i)}^{\oplus}>t\min_{j\in S} x_{j}\right)}{
        \P\left(Z_{(i)}^{\oplus}>t\right)}\nonumber \\
    &\leq& \alpha_{i,n} \sup_{t>0}\frac{\P\left(Z_{(i)}^{\oplus}>t\min_{j\in S} x_{j}\right)}{
        \P\left(Z_{(i)}^{\oplus}>t\right)}.
\eeam
Since $\bZ^{(\oplus)}\in\MRV(\alpha_i,b_i,f_i(d)\mu_i,\E_d^{(i)})$ and $f_i(d)\mu_i\left(\{\bz\in\R_+^d:z_{(i)}> 1\}\right)>0$, we have
\beao
    1\leq  \frac{\P\left(Z_{(i)}^{\oplus}>t\min_{j\in S} x_{j}\right)}{
        \P\left(Z_{(i)}^{\oplus}>t\right)}\stackrel{t\to\infty}{\to}
        \left(\min_{j\in S} x_{j}\right)^{-\alpha_i}<\infty.
\eeao
Hence, there exists a finite constant $C>0$ such that
\beam \label{4.8}
    \sup_{t>0}\frac{\P\left(Z_{(i)}^{\oplus}>t\min_{j\in S} x_{j}\right)}{
        \P\left(Z_{(i)}^{\oplus}>t\right)}\leq C.
\eeam
Then an application of \Cref{aux_1} and \eqref{E11}, \eqref{4.8} yield
\beao
    0\leq \sup_{t>0}\frac{\P\left(\bigcap_{j\in S}\left\{\sum_{k=1}^nZ_{j}^{(k)}>tx_{j}\right\}\right)}{
        \P\left(Z_{(i)}^{\oplus}>t\right)}\leq  CK_i^n, \quad n\in\N.
\eeao
Thus, there exists a uniform finite upper bound of the right hand side of \eqref{E5}
such that due to Pratt's Theorem
we are allowed to exchange the limit and the sum. A conclusion of Assumption~\ref{AssumptionA}
is then
\beao
    \lim_{t\to\infty}\frac{\P\left(\sum_{k=1}^{\tau}\bZ^{(k)}\in tA\right)}{\P\left(Z_{(i)}^{\oplus}>t\right)}
    =\sum_{n=0}^{\infty}\P(\tau=n)f_i(n)\frac{\mu_i(A)}{f_i(d)\mu_i\left(\{\bz\in\R_+^d:z_{(i)}> 1\}\right)}.
\eeao
Then \Cref{prop:rectsetsforM} and $\bZ^{\oplus}\in\MRV(\alpha_i,b_i,f_i(d)\mu_i,\E_d^{(i)})$ for $i=1,\ldots,d$ result in $\sum_{k=1}^{\tau}\bZ^{(k)}\in \MRV(\alpha_i,b_i,\E(f_i(\tau))\mu_i,\E_d^{(i)})$.
\end{proof}
\newpage
\section{Proofs of the results in Section \ref{section:Levy}} \label{sec:proofLevy}

\begin{proof}[Proof of \Cref{Proposition 6.1}]  \mbox{}\\
\textbf{Step 1.} To begin with, let $(\bL(s))_{s\geq 0}$ be a compound Poisson process with intensity $\lambda>0$ and jump size distribution $\P_{\bZ}=\Pi/\lambda$, which is a proper probability measure on $\R^d_+$. Let us also assume that $(N(s))_{s\geq 0}$ is a Poisson process with intensity $\lambda$ and $\bZ^{(1)},\bZ^{(2)},\ldots$ are  i.i.d.  with distribution $\P_{\bZ}$.
Then $\P_{\bZ}\in\MRV(\alpha_i,b_i,\mu_i/\lambda,\E_d^{(i)})$.
Since $\E(N(s))=\lambda s$, using \Cref{prop:mrv:tailsremainsame} and \Cref{thm:randomsum:main} we have
\beao
  \bL(s)\eqd\sum_{k=1}^{N(s)}\bZ^{(k)}\in\MRV(\alpha_i,b_i,s\mu_i,\E_d^{(i)}) \quad  \text {for } \quad i=1,\ldots,d.
\eeao
\textbf{Step 2.} Now let $(\bL(s))_{s\geq 0}$ be a general L\'evy process.
Define $D_{a,\infty}:=\{\bz\in\R^d:a<\|\bz\|<\infty\}$ for any $a>0$.
Due to the L\'evy-Itô decomposition (see \cite[Theorem 19.2 and Theorem 19.3]{sato:1991}) we can decompose $\bL$ into two independent L\'evy processes $\bL_1=(\bL_1(s))_{s\geq 0}$ and $\bL_2=(\bL_2(s))_{s\geq 0}$ such that
$$\bL(s)=\bL_1(s)+\bL_2(s), \quad s\geq 0,$$
where $\bL_1$ is a compound Poisson process with L\'evy measure $\Pi(\cdot\cap D_{a,\infty})/\Pi(D_{a,\infty})$\ and Poisson intensity
$\Pi(D_{a,\infty})$, whereas  $\bL_2$ satisfies  $\E\|\bL_2(s)\|^\theta<\infty$ for any $\theta>0$ (see \cite[Lemma 2.2 and proof of Theorem 2.3]{lindskog:2004thesis}). Thus, the L\'evy measure of $\bL_1$ is $\Pi(\cdot\cap D_{a,\infty})\in \MRV(\alpha_i,b_i,\mu_i,\E_d^{(i)})$ for $i=1,\ldots,d$
and by step 1 we have
\beao
  \bL_1(s)\in\MRV(\alpha_i,b_i,s\mu_i,\E_d^{(i)}) \quad  \text {for } \quad i=1,\ldots,d.
\eeao
Then an application of \Cref{Lemma_aux} and $\bL(s)=\bL_1(s)+\bL_2(s)$ gives us the result.
\end{proof}

\begin{proof}[Proof of \Cref{Proposition 6.3}]
As in \Cref{Proposition 6.1} it is sufficient to investigate compound Poisson processes $(\bL(s))_{s\geq 0}=(\sum_{k=1}^{N(s)}\bZ^{(k)})_{s\geq 0}$  with intensity $\lambda>0$ and jumps size distribution $\P_{\bZ}=\Pi/\lambda$. Then
for the jump size distribution we have $\P_{\bZ}=\Pi/\lambda\in\MRV(\alpha_i,b_i,\mu_i/\lambda,\E_d^{(i)})$ for $i=1,\ldots,d$.
Since $\E(N(s)(N(s)-1)(N(s)-i+1))=(\lambda s)^{i}$, $f_i(n)=0$ for $n<i$, $f_i(n)=n!/(n-i)!$ for $n\geq i$, \Cref{prop:Delta1}  and \Cref{thm:randomsum:main} result in
\begin{eqnarray*}
    \bL(s)=\sum_{k=1}^{N(s)}\bZ^{(k)}\in \MRV(i\alpha_1,b_1^{1/i},s^{i}\mu_i^L,\E_d^{(i)})
    \quad \text {for } \quad i=1,\ldots,  d,
\end{eqnarray*}
which is the statement.
\end{proof}

\begin{proof}[Proof of \Cref{Corollary:2}]  Suppose $( \bL(s))_{s\geq 0}$ is a compound Poisson process with $\Pi(\R^d_+)\leq 1$. Let $\bZ^{(1)},\bZ^{(2)},\ldots$ be a sequence of  i.i.d.  random vectors with distribution $\P(\bZ^{(1)}=\bzero)=1-\Pi(\R_+^d)$ and
$\P(\bZ^{(1)}\in A\backslash\{\bzero\})=\Pi(A\backslash\{\bzero\})$
for all sets $A\in \mathcal{B}(\R_+^d)$. Then
$\bZ^{(1)}\in\MRV(\alpha_i,b_i,\mu_i,\E_d^{(i)})$  for $i=1,\ldots,d$. Due to \Cref{prop:ind} and \Cref{thm:randomsum:main} we receive
\begin{eqnarray*}
    \bL(s)\eqd \sum_{k=1}^{N^*(s)}\bZ^{(k)}\in\MRV(i\alpha,b_1^{1/i},\E(N^*(s)^{i})\mu_i,\E_d^{(i)}).
\end{eqnarray*}
We extend this result to general L\'evy measures and L\'evy processes as in \Cref{Proposition 6.1} by choosing $a$ large enough so that $\Pi(D_{a,\infty})\leq 1$.
\end{proof}

\end{document}